\DeclarePairedDelimiterX{\scal}[2]{\langle}{\rangle}{#1, #2}
\DeclarePairedDelimiterX{\norm}[1]{\lVert}{\rVert}{#1}
\DeclarePairedDelimiterX{\normi}[1]{\lVert}{\rVert_\infty}{#1}
\DeclareMathOperator{\dive}{div}
\DeclareMathOperator{\Id}{Id}
\DeclareMathOperator{\supp}{supp}
\DeclareMathOperator{\vol}{Vol}
\DeclareMathOperator{\loc}{loc}
\DeclareMathOperator{\tang}{Tan}
\DeclareMathOperator*{\per}{Per}
\newcommand{\rom}[1]{\uppercase\expandafter{\romannumeral #1\relax}}
\renewcommand{\u}{3}
\newcommand{\gk}{m}
\newcommand{\breg}{C^{2,\alpha}}%
\newcommand{\des}{\partial^*}
\newcommand{\td}{\Theta_2}
\newcommand{\as}{A_{\Sigma}}
\renewcommand{\phi}{\varphi}
\newcommand{\g}{\Gamma}
\newcommand{\om}{\Omega}
\newcommand{\e}{\varepsilon}
\newcommand{\NN}{\mathcal{N}}
\newcommand{\s}{\Sigma}
\renewcommand{\l}{\ell}
\newcommand{\psiij}{\psi_{i,j}}
\newcommand{\ts}[1]{\tilde{\s}_{#1}}
\newcommand{\tg}[1]{\tilde{\g}_{#1}}
\renewcommand{\t}{\tau}
\newcommand{\opm}{\mathscr{O}(\M)}
\newcommand{\sw}{\mathscr{S}}
\newcommand{\anul}[2]{\mathcal{AN}_{#1}({#2})}
\newcommand{\co}{\mathcal{C}}
\newcommand{\hpp}{\mathcal{HP}\big(\ck{k},\an\big)}
\newcommand{\cu}{\V_2(\M) \times \V_2(\pM)}
\newcommand{\am}{A^{2\m}}
\newcommand{\ami}{A_\infty}
\newcommand{\gnm}{G_2(\M)}
\newcommand{\gnpm}{G_2(\pM)}
\newcommand{\pas}{(\s,\g)}
\newcommand{\upair}[1]{(U_{1,{#1}},U_{2,{#1}})}
\newcommand{\restr}[1]{\llcorner_{#1}}
\DeclareMathOperator*{\an}{An}
\DeclareMathOperator{\inter}{Int}
\newcommand{\intm}{\interior{\M}}
\newcommand{\bi}{\partial_i}
\newcommand{\bb}{\partial_b}
\newcommand{\bio}{\bi \om}
\newcommand{\bbo}{\bb \om}
\newcommand{\biot}{\bi \om_t}
\newcommand{\bbot}{\bb \om_t}
\newcommand{\db}{\gamma}
\newcommand{\dbo}{\db(\om)}
\newcommand{\interior}[1]{%
  {\kern0pt#1}^{\mathrm{o}}%
}
\newcommand{\clos}[1]{\mkern 1mu\overline{\mkern-1mu#1\mkern-1mu}\mkern 1mu}
\newcommand{\pM}{\partial \M}
\newcommand{\de}{\partial}%
\newcommand{\oswo}{\{\om_t\}_{t \in [0,1]}} 
\newcommand{\swo}{\{\s_t\}_{t \in [0,1]}} 
\newcommand{\I}{[0,1]}
\renewcommand{\c}{(\s,\g)}
\newcommand{\ct}[1]{(\s_{#1},\g_{#1})}
\newcommand{\tc}{(\tilde{\s},\tilde{\g})}
\renewcommand{\tt}[1]{(\tilde{\s}_{#1},\tilde{\g}_{#1})}
\newcommand{\ck}[1]{(\s^{#1},\g^{#1})}
\newcommand{\ctk}[2]{(\s_{#1}^{#2},\g_{#1}^{#2})}
\newcommand{\tk}[1]{(\tilde{\s}^{#1},\tilde{\g}^{#1})}
\newcommand{\ttk}[2]{(\tilde{\s}_{#1}^{#2},\tilde{\g}_{#1}^{#2})}
\newcommand{\ft}[1]{(\Phi_{#1},\Psi_{#1})}
\newcommand{\cswo}{(\s_t, \g_t)_{t \in \I}}
\newcommand{\ctswo}{(\tilde{\s}_t, \tilde{\g}_t)_{t \in \I}}
\newcommand{\cswok}{(\s_t^k, \g_t^k)}
\newcommand{\ctswok}{(\tilde{\s}_t^k, \tilde{\g}_t^k)}
\newcommand{\csw}{(\s_t,\g_t)}
\newcommand{\ctsw}{(\tilde{\s}_t,\tilde{\g}_t)}
\newcommand{\cktk}{(\s_{t_k}^k,\g_{t_k}^k)}
\newcommand{\st}{\s_t}
\newcommand{\gt}{\g_t}
\newcommand{\ot}[1]{\om_{#1}}
\newcommand{\tom}{\tilde{\om}}
\newcommand{\tot}[1]{\tilde{\om}_{#1}}
\newcommand{\tok}[1]{\tilde{\om}^{#1}}
\newcommand{\con}{\eta}
\newcommand{\z}{\zeta}%
\newcommand{\fa}{F_a}%
\newcommand{\dfa}{\delta_{\fa}}
\newcommand{\dtfa}{\delta^2_{\fa}}
\newcommand{\ms}{\max_{t \in [0,1]}}
\newcommand{\m}{{m_0}}
\newcommand{\nv}{\norm{V}}
\newcommand{\nw}{\norm{W}}
\newcommand{\nds}[1]{|\delta^s {#1}|}
\newcommand{\tv}{\tilde{V}}
\newcommand{\tw}{\tilde{W}}
\newcommand{\ntv}{\norm{\tv}}
\newcommand{\ntw}{\norm{\tw}}
\newcommand{\pair}{(V,W)}
\newcommand{\tpair}{(\tv,\tw)}
\newcommand{\pairij}{(V_{i}^j,W_{i}^j)}
\newcommand{\rij}{r_{i,j}}
\newcommand{\bij}{\B_{i,j}}
\newcommand{\tbij}{\tilde{\B}_{i,j}}
\newcommand{\pairt}{(V_t,W_t)}
\newcommand{\tpairk}[1]{(\tilde{V}_{#1},\tilde{W}_{#1})}
\newcommand{\pairk}[1]{(V^{#1},W^{#1})}
\newcommand{\paira}[1]{(V_{#1},W_{#1})}
\newcommand{\cpair}{(C,K)}
\newcommand{\tcpair}{(\tilde{C},\tilde{K})}
\newcommand{\sumpair}{\norm{V}+a\norm{W}}
\newcommand{\ypair}{Y_{\pair}}
\newcommand{\dil}[1]{D_{{#1}}}
\newcommand{\me}{\mathscr{M}}
\newcommand{\mep}{\mathscr{M}^+}
\renewcommand{\-}{\setminus}
\newcommand{\Xt}{\mathfrak{X}_t(\M)}
\newcommand{\Xc}{\mathfrak{X}_c(\M)}
\newcommand{\Xm}{\mathfrak{X}(\M)}
\newcommand{\X}{\mathfrak{X}}
\newcommand{\Xo}{\mathfrak{X}_0(\M)}
\newcommand{\Xp}{\mathfrak{X}_\perp(\M)}
\newcommand{\cc}{\subset\joinrel\subset}
\newcommand{\ind}{\mathbf{1}}
\newcommand{\D}{\mathcal{D}}
\newcommand{\B}{\mathcal{B}}
\newcommand{\T}{\Theta}
\newcommand{\tum}{\Theta^{*}_{m}}
\newcommand{\tlm}{\Theta_{*m}}
\newcommand{\tm}{\Theta_m}
\newcommand{\muv}{\sigma_V}
\newcommand{\muj}{\sigma_j}
\newcommand{\wto}{\stackrel{\ast}{\rightharpoonup}}
\newcommand{\tto}[2]{\xrightarrow[{#2}]{{#1}}}
\newcommand*\dif{\mathop{}\!\mathrm{d}}
\DeclareMathOperator*{\Div}{div}
\DeclareMathOperator*{\diam}{diam}
\DeclareMathOperator*{\dist}{dist}
\newcommand{\R}{\mathbb{R}}
\newcommand{\ru}{\mathbb{R}^{3}}
\newcommand{\N}{\mathbb{N}}
\newcommand{\M}{\mathcal{M}}
\newcommand{\F}{\mathcal{F}}
\newcommand{\V}{\mathcal{V}}
\renewcommand{\T}{\Theta}
\newcommand{\haus}[1]{\mathcal{H}^{#1}}
\newcommand{\hn}{\mathcal{H}^2}
\newcommand{\hu}{\mathcal{H}^{1}}
\numberwithin{equation}{section}
\newtheorem{theorem}{Theorem}[section]
\newtheorem*{theorem*}{Theorem}
\newtheorem{lemma}[theorem]{Lemma}
\newtheorem{proposition}[theorem]{Proposition}
\theoremstyle{definition}
\newtheorem{defi}{Definition}[section]
\theoremstyle{remark}
\newtheorem{rem}{Remark}[section]
\newlist{steps}{enumerate}{1}
\setlist[steps, 1]{label = \textbf{Step \arabic*:}}
\newcounter{count}
\title{Min-max construction of minimal surfaces with a fixed angle at the boundary}
\author[L.~De Masi]{Luigi De Masi}
\address{\textit{L.~De Masi:} SISSA Via Bonomea 265, 34136 Trieste, Italy}
\email{ldemasi@sissa.it}
\author[G.~De Philippis]{Guido De Philippis}
\address{\textit{G.~De Philippis:} Courant Institute of Mathematical Sciences, New York University, 251 Mercer St., New York, NY 10012, USA.}
\email{guido@cims.nyu.edu}
\thanks{{\bf Acknowledgments.} 
	G.D.P has been partially supported by the NSF grant DMS 2055686 and by the Simons Foundation.}
\subjclass[2010]{}
\keywords{Minimal surfaces, capillarity functional, min-max techniques}
\begin{document}
\maketitle

\begin{abstract}
We  prove the existence of  minimal surfaces in a bounded convex subset of $\R^3$ $\M$  intersecting  the boundary of $\M$ with a fixed contact angle. The proof is based on a   min-max construction in the spirit of Almgren-Pitts for the capillarity functional.
\end{abstract}

\section{Introduction}\label{sec:introduction}
\subsection{Background and main result} Minimal surfaces, i.e.\ critical points of the area functional,  are a fundamental object in Geometric Analysis and the problem of finding a minimal surface (possibly with given side constraints) in a given Riemannian manifold \(\mathcal M\) has been a fundamental topic of research in the last 50 years.

When the topology of the ambient manifold \(\mathcal M\) is non-trivial, existence result can be often  obtained via minimization of the area functional in suitable homology classes. On the other hand, when the topology is trivial, it is easy to see that non-trivial minimizers do not exist and one has to rely on different methods.

Since the seminal work of Birkhoff about the existence of closed geodesic in Riemannian manifolds,  the use  of min-max techniques has been a successful tool to show existence of critical points for a variety of functionals. 

The implementation of min-max techniques for the construction of minimal surfaces has been started in the seminal work of Almgren and Pitts and deeply relies on techniques of Geometric Measure Theory. One of the fundamental outcomes of the theory has been to show the existence of a \(d\) dimensional minimal surface in every \((d+1)\)-dimensional manifold, \cite{pitts2014existence,colding2003min,delellis2009existence}. (To be precise the minimal surface is allowed to have a ``small'' singular set when \(d\ge 7\)) . Since their seminal work, the strategy has been suitably modified  to show the existence of surfaces with prescribed curvature \cite{zhou2019min}, given boundary \cite{delellis2017minmax} as well as free boundary, \cite{delellis2017minmax}. Furthermore these techniques have been as well used to show the existence of \emph{infinitely many} minimal surfaces in a given Riemannian manifold \cite{marques2016existence,IrieMarquesNeves18,song2018existence}

%
The main result of this paper is to show how Almgren-Pitts techniques (and more precisely their revised version by Colding-De Lellis, \cite{colding2003min}) allows the existence of minimal surfaces with 	\emph{prescribed contact angle at the boundary} in convex subsets of \(\ru\).

\begin{theorem}\label{thm:main}
Let \(\M\subset \R^3\) be a  convex bounded open set with \(C^{2,\alpha}\) boundary. Then, for all \(\theta \in (0,\pi/2)\)  there exists a minimal surface  \(\Sigma\subset \M\) with \(\partial \Sigma \subset \partial \M\) and 
\[
\eta_{\Sigma} \cdot N=\sin \theta \qquad \text{on \(\partial \Sigma\)}
\]
where \(\eta_{\Sigma}\)  is the exterior co-normal field to  \(\Sigma\) at \(\partial \Sigma\)  and \(N\) is the exterior unit normal to \(\partial \M\).
\end{theorem}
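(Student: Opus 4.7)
The plan is to carry out an Almgren--Pitts / Colding--De Lellis min-max construction in which the area functional is replaced by the \emph{capillarity energy} associated with the contact angle $\theta$. The natural objects are pairs $(\s,\g)$, where $\s\subset \M$ is a surface with $\ps \subset \pM$ and $\g \subset \pM$ is a region of $\pM$ with $\partial \g = \ps$; equivalently, both pieces arise as parts of the reduced boundary of a Caccioppoli set $\om \subset \M$, with $\s = \des \om \cap \intm$ and $\g = \des \om \cap \pM$. The energy one considers is
\begin{equation}
\E(\s,\g) = \hn(\s) - \sin\theta \, \hn(\g),
\end{equation}
whose first variation along one-parameter families of diffeomorphisms of $\M$ that map $\pM$ into itself gives simultaneously the vanishing mean curvature of $\s$ and the prescribed angle condition $\eta_\s \cdot N = \sin\theta$ on $\ps$.

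Next I would introduce the measure-theoretic framework replacing smooth pairs with varifold pairs $(V,W) \in \V_2(\M) \times \V_2(\pM)$, where stationarity amounts to the vanishing of the first variation of $\nv - \sin\theta\, \nw$ on ambient vector fields tangent to $\pM$ along $\pM$. I would then consider sweepouts $\cswo$ connecting the trivial states $\om_0 = \emptyset$ and $\om_1 = \M$, group them into a saturated homotopy class $\Pi$, and define the min-max width
\begin{equation}
w(\Pi) = \inf_{\{\om_t\} \in \Pi} \ms \E(\s_t, \g_t).
\end{equation}
A crucial preliminary point is that, for a suitable choice of $\Pi$, one has $w(\Pi)>0$: the assumption $\sin\theta < 1$, combined with the relative isoperimetric inequality on $\M$, forces $\E(\s,\g) \ge c(\theta)\,|\om|^{2/3}$ whenever $|\om|$ is small, so every sweepout in $\Pi$ must attain a slice of strictly positive energy.

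The heart of the argument is then the Colding--De Lellis version of Almgren--Pitts, transposed to the pair setting. From a minimizing sequence of sweepouts one first extracts, via the pull-tight procedure, a varifold pair $(V,W)$ realized at a critical time and stationary for $\E$. A Pitts-type combinatorial argument upgrades this to an \emph{almost-minimizing in annuli} pair: in small annuli around every point, $(V,W)$ is nearly $\E$-minimizing among competitor pairs coinciding with it outside the annulus. The almost-minimizing property then yields well-behaved replacement pairs and, through standard tangent-cone and blow-up analysis, regularity of $(V,W)$.

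The main obstacle is the regularity theory at the boundary, i.e.\ showing that the limiting pair corresponds to a smooth capillary minimal surface. Here convexity of $\M$ enters in two essential ways. First, through a maximum-principle argument of Solomon--White type: since the second fundamental form of $\pM$ has the correct sign, a stationary $\s$ cannot touch $\pM$ tangentially at points away from $\ps$, which rules out spurious accumulations of $\nv$ on $\pM$ in the limit. Second, convexity permits the construction of comparison competitors by projecting onto $\pM$, yielding monotonicity-type and density estimates up to $\pM$. Combining these boundary tools with interior Allard regularity produces the smooth minimal surface $\s$ satisfying $\eta_\s \cdot N = \sin\theta$ on $\ps$, as claimed.
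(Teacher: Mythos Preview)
Your overall architecture matches the paper: run Colding--De Lellis min-max on a capillarity functional for pairs, pull-tight to a stationary varifold pair, upgrade via the Almgren--Pitts combinatorial lemma to almost-minimizing in annuli, build replacements, and extract regularity. Two substantive points, however, separate your sketch from a working proof.

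First, your functional carries the wrong coefficient. The first variation of $\hn(\s)+\lambda\,\hn(\g)$ along diffeomorphisms tangent to $\pM$ forces the tangential part of $\eta_\s$ to equal $-\lambda\,\zeta$ (with $\zeta$ the outer conormal of $\g$), so that $\eta_\s\cdot N=\sqrt{1-\lambda^2}$. To obtain $\eta_\s\cdot N=\sin\theta$ one needs $\lambda=\cos\theta$, not $-\sin\theta$; your choice yields $\eta_\s\cdot N=\cos\theta$. The paper works with $\fa(\om)=\hn(\bi\om)+\cos\theta\,\hn(\bb\om)$, and this matters beyond bookkeeping: the nontriviality estimate (Proposition~4.2) relies on $1-\cos\theta>0$ and shows the width exceeds $\cos\theta\,\hn(\pM)$, not zero, which is what rules out the trivial limit $V=0$, $W=\pM$.

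Second, and more seriously, your regularity paragraph misidentifies the mechanism. The boundary monotonicity and density bounds do not come from convexity or from projection competitors; they follow because the contact-angle condition forces $V+aW$ to have bounded first variation, with an explicit monotonicity formula at points of $\pM$. Convexity is used only once, to rule out $\nv$-mass sitting on $\pM$ away from the genuine capillary boundary. The actual difficulty is elsewhere: building replacements requires a \emph{curvature estimate for stable capillary surfaces}, obtained from a Bernstein theorem proved by testing the stability operator with $\zeta\cdot(1-\cos\theta\,\nu_\s\cdot e_1)/\sin\theta$; this is precisely where the restriction to ambient dimension $3$ enters. And removing the last possible boundary singularity is not an Allard-type step. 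The paper instead uses that for $\theta<\pi/2$ the Jacobi field $\nu_\s\cdot e_1$ has a sign (by stability), so $\s$ is a Lipschitz graph over $T_x\pM$ solving a one-phase free boundary problem, and then appeals to De Silva's regularity theory. None of these ingredients appear in your outline, and ``standard tangent-cone and blow-up analysis'' together with interior Allard is not sufficient to close the argument.
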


\subsection{Sketch of the argument and structure of the paper}

The main idea of the proof, which has  already appeared in the Master Thesis of the first author \cite{demasi2018}, where some  preliminary steps of this program were carried out, is to apply  Almgren-Pitts/Colding-De Lellis techniques to the \emph{capillarity energy}:
\begin{equation}\label{e:cap}
\fa(\om) = \hn(\bi{\om}) + \cos \theta \hn(\bb \om),
\end{equation}
where \(\Omega\subset \mathcal M\) and \(\bi{\om}=\partial \Omega\cap \interior{\M}\), \(\bb \om=\partial \Omega\cap \partial \M\). Note indeed that a (smooth) stationary point of \eqref{e:cap} shall  satisfy
\[
H_{\Sigma }=0  \text{ in \(\interior{\M}\),} \qquad \eta_{\Sigma} \cdot N=\sin \theta \quad \text{on \(\partial \Sigma\cap \partial \M\)}
\]
where \(\Sigma=\bi{\om}\). With respect to the case of the pure area functional, or to the free boundary one (when \(\theta=\pi/2\))  two main new difficulties arise. First,  a priori, the capillarity functional is only defined for surfaces which are boundary of a set and it is not clear how to extend it to class of rectifiable varifold, a technical step which is instrumental to the proof of Theorem \ref{thm:main}. The key idea here is to ``decouple'' the functional and initially look at the interior and boundary part as independent. Once this has been done, the pull-tight procedure can be  carried on and one can show the existence of a stationary varifold which satisfies a suitable notion of ``angle condition''  which a slight modification on the on proposed by Kagaya and Tonegawa in  \cite{kagaya2017}. This will be done in Section \ref{sec3} and \ref{sec:4}.

The second key step is to show regularity of the obtained varifold; here we  start by following  the stategy of \cite{pitts2014existence,colding2003min,delellis2009existence}, and showing that the stationary varifold found in the previous step is \(\e\)-minimizing in annuli and thus it admits replacements there, see Section \ref{sec5} and \ref{sec7}. Existence of replacements then follows by the the regularity theory of  \cite{DePMag14,dephilippis2014dimensional} together with the  curvature estimate proved for \(2\)-dimensional  stable critical point of the capillarity functional. These estimates, recalled in Section \ref{sec6} can be easily obtained if one assumes an a priori area growth on the surface (a condition which is met on our construction) and have been recently established in full generality in \cite{hong2021capillary}. Existence of replacements implies that the obtained varifold is induced by a smooth surface outside a finite set of points and the proof follows the one in~\cite{delellis2017minmax}.  In order to remove the final singular points we instead rely on a new argument which strongly uses that \(\theta<\pi/2\). Under this assumption we can show that stable surface is indeed graphical and satisfies a suitable free boundary problem, from which we can deduce the desired regularity. This is done in Section \ref{sec8}. 
%
%
%
%

We conclude the  introduction by commenting the restriction to  open convex subsets of \(\R^3\). First of all, it will be clear from the proof that we can endow \(\mathcal M\) with any (smooth) Riemaniann metric for which the boundary is still convex  and obtain the same result. Convexity on the other hand  is used in showing that the ``interior'' part of \(\Sigma\) does not touch the boundary of \(\M\) and it looks plausible that by following the argument of \cite{Li2015}, the assumption can be dispensed. The  restriction to dimension \(3\)  appears instead only in one (crucial) point of our proof, namely in showing  curvature estimates for stable capillarity surfaces in Section \ref{sec6}. We  conjecture that similar estimates should hold true in higher dimension as well and if this is the case, the same arguments of the paper will show the existence of \(d\)-dimensional surfaces into convex sets of \((d+1)\)-dimensional manifolds. We refer the reader to the forthcoming PhD thesis of the first author  for some of the mentioned generalizations, \cite{demasi2022}.

\bigskip

While completing  this paper, we learned that  Chao Li, Xin Zhou and Jonathan Zu \cite{li2021}  had just obtained a similar result  with related  techniques.

\section{Notations}\label{sec:notations and basic results}
\subsection{Basic notations}
We work in $\ru$.
If $x \in \ru$, we denote by $B_r(x)$ the closed ball with center $x$ and radius $r$ 
Moreover we set $B_r := B_r(0)$. If $0<s<r$ we call $\an(x,s,r)$ the open annulus $\interior{B_r(x)} \- B_s(x)$ and $\anul{\rho}{x}$ the set of open annuli $\an(x,s,r)$ such that $r< \rho$.
For any $\lambda>0$ and $x \in \ru$ we consider the dilation map
\begin{equation}\label{eq:dilation_map}
\dil{x,r}(y)= \frac{1}{r}(y-x).
\end{equation}

For every $A \subset \ru$, we denote by $\ind_A$ the indicator function of $A$, by $\clos{A}$ and $\interior{A}$ respectively the closure and the interior of $A$ in the euclidean topology.
We denote by $S$ a generic $\gk$-dimensional linear subspace (or $\gk$-plane) of $\ru$ and we write $S^\perp$ for the orthogonal complement of $S$ in $\ru$. We denote by $P_S$ the orthogonal projection on $S$. If $X$ is a $C^1$ vector field, we call $\dive_S X$ the scalar product $P_S \cdot DX$. If $\tau_1, \dots, \tau_\gk$ is an orthonormal basis of $S$, by simple computations one has
\begin{equation*}\label{eq:tangential_divergence_defi}
\dive_S X(x)=
\sum_{i=1}^\gk  {D_{\tau_i} X(x)}\cdot{\tau_i}.
\end{equation*}
If $\Gamma$ is a $C^1$ $\gk$-dimensional sub-manifold in $\ru$ and $x \in \Gamma$, we write $T_x \Gamma$ for the tangent space to $\Gamma$ at $x$.

We work on a compact domain $\M \subset \ru$ with $\breg$ boundary $\pM$ for some $\alpha \in (0,1)$. We will assume also that $\pM$ is uniformly  convex, that is the principal curvatures of $\pM$ satisfy an uniform positive lower bound.
We write $N(x)$ for the exterior unit normal vector to $\pM$ at $x$. If $\om \subset \M$ is open in the relative topology of $\M$ and has finite perimeter, we call $\bi\om$ and $\bb\om$ respectively the internal and the external reduced boundary of $\om$, that is
\begin{equation*}
\bi \om = \de^* \om \cap \intm,
\qquad
\bb \om = \de^* \om \cap \pM,
\end{equation*}
where $\de^* \om$ denotes the reduced boundary of $\om$.
If $\bi \om$ and $\bb \om$ are sufficiently regular $2$-submanifolds of $\ru$, we denote by $\dbo$ their common boundary (as manifolds).

We work with several classes of vector fields on $\M$, which we denote with the letter $\X$ with subscripts based on their behavior on $\pM$:
\begin{gather*}\label{eq:vector_fields}
\Xm = C^1(\clos{\M},\R^{3}),
\qquad
\Xt = \{X \in \Xm \mid X(x) \in T_x \pM, \,\, \forall x \in \pM\},
\notag
\\
\Xp = \{X \in \Xm \mid X(x) \in (T_x \pM)^\perp ,\,\, \forall x \in \pM\}
\\
\Xo = \{X \in \Xm \mid X(x)=0 , \,\,\forall x \in \pM\}
\quad
\Xc = \{X \in \Xm \mid \supp X \cc \interior{\M}\}. \notag
\end{gather*}

\subsection{Measures}
If $A \subset \ru$, $\me(A,\R^{m})$ is the space of $\R^m$-valued Radon measures on $A$ and $\mep(A)$ is the space of positive Radon measures on $A$. If $\mu \in \me(A,\R^m)$ we denote by $|\mu|$ the total variation measure of $\mu$. If $B \subset \ru$ is Borel, we write $\mu \llcorner B$ for the restriction of the measure $\mu$ to $B$.
We endow $\me(A,\R^m)$ with the weak*-topology: i.e. we say that a sequence of Radon measures $\{\muj\}_j$ converges to $\mu$ ($\muj \wto \mu$) if
\begin{equation*}
\lim_{j \to \infty}
\int f \dif \muj
=
\int f \dif \mu
\qquad
\forall f \in C_c(A,\R^m).
\end{equation*}
If $\mu \in \mep(\ru)$, $x \in \ru$, $\gk \in \N$, we define the upper and the lower $\gk$-densities of $\mu$ at $x$:
\begin{equation*}
\tum(\mu,x)
=
\limsup_{r \to 0} \frac{\mu\big(B_r(x)\big)}{r^m}
\qquad
\tlm(\mu,x)
=
\liminf_{r \to 0} \frac{\mu\big(B_r(x)\big)}{r^m}.
\end{equation*}
If the above limits coincide, then we define the $m$-density of $\mu$ at $x$ as their common value, which we denote by $\tm(\mu,x)$.
For any $s>0$, we denote by $\haus{s}$ the $s$-dimensional Hausdorff measure.
\subsection{Varifolds}
If $1 \leq m \leq \u$ we call $G(m,\u)$ the Grassmannian of the un-oriented $m$-dimensional linear subspaces (or $m$-planes) of $\ru$. If $A\subset \ru$ we denote by $G_m(A) := A \times G(m,\u)$ the trivial Grassmannian bundle over $A$.


An \emph{$m$-varifold} on $A$ is a positive Radon measure on $G_m(A)$. We denote by $\V_m(A)$ the set of all $m$-varifolds on $A$ and we endow $\V_m(A)$ with the topology of the weak*-convergence of Radon measures, i.e. we say that $V_j \wto V$ if
\begin{equation*}
\lim_{j \to \infty} \int_{G_m(A)} \varphi(x,S) \dif V_j(x,S)
=
\int_{G_m(A)} \varphi(x,S) \dif V(x,S)
\qquad
\forall \varphi \in C_c(G_m(A)).
\end{equation*}
An $m$-rectifiable measure $\mu= \theta \haus{m}\llcorner M$ in $\ru$ (where $M$ is $m$-rectifiable and $\theta \geq 0$ belongs to $L^1_{\loc}(M,\haus{m})$) induces the $m$-varifold
\begin{equation*}
V = \theta \haus{m}\llcorner M \otimes \delta_{T_xM},
\end{equation*}
where $T_xM$ is the approximate tangent space of $M$ at $x$. A varifold that is induced by a rectifiable measure is called a \emph{rectifiable varifold}.
If $V$ is an $m$-varifold, the \emph{mass} $\norm{V}$ (or \emph{total variation}) of $V$ is the positive Radon measure defined as
\begin{equation*}
\norm{V}(A) = V(G_m(A))
\qquad
\forall A \subset \om \mbox{ Borel}.
\end{equation*}
If $V$ is the $m$-varifold induced by the rectifiable measure $\mu= \theta \haus{m}\llcorner M$, then
\begin{equation*}
\norm{V}(B)
=
\int_B \theta(x) \dif \haus{m}(x).
\end{equation*}
By slight abuse of notation, we often denote $\supp \nv$ by $\supp V$.
If $V \in \V_m(\M)$ and if $\psi:\M \to \M$ is a diffeomorphism, the \emph{push forward} $\psi_{\sharp}V$ of $V$ through $\psi$ is the varifold in $\V_m(\M)$ such that, $\forall \varphi \in C_c\big(G_m(\M)\big)$,
\begin{equation}\label{eq:push-forward_varifolds}
\int_{G_m\left(\M\right)} \varphi(y,T) \dif \psi_{\sharp}V(y,T)
=
\int_{G_m(\M)} J_S\psi(x) \varphi\big(\psi(x),\dif \psi_x(S)\big) \dif V(x,S),
\end{equation}
where $J_S \psi(x)$ is the Jacobian of $\psi$ relative to the $m$-plane $S$, i.e.
\begin{equation*}
J_S \psi(x) = \sqrt{\det\big( (\dif \psi_x)_{|S}^* \circ \big( (\dif \psi_x)_{|S} \big)}.
\end{equation*}
We notice that this \emph{is not} the push forward of measures previous defined (which is denoted by the different symbol $f_\# \mu$). In fact, the push forward of varifolds is defined in this way in order to ensure the validity of the area formula: indeed if $V$ is induced by a rectifiable set $M$, then $\psi_\sharp V$ is induced by $\psi(M)$.
If $V \in \V_m(\R^n)$, we say that $C \in \V_m(\R^n)$ is a \emph{blow-up} of $V$ at $x$ or a \emph{tangent varifold} to $V$ at $x$ if there exists a sequence $r_j \downarrow 0$ such that
\begin{equation*}
(\dil{x,r_j})_\sharp V
\wto
C.
\end{equation*}
We write $\tang(V,x)$ for the set of tangent varifold to $V$ at the point $x$.

If $V \in \V_m(\M)$ and if $X \in \Xm$ the \emph{first variation} $\delta V(X)$ of $V$ with respect to $X$ is
\begin{equation*}
\delta V [X] = \left. \frac{\dif}{\dif t} \Big( \norm*{(\psi_t)_{\sharp} V}(\ru) \Big) \right|_{t=0}
\end{equation*}
where $\psi_t$ is the flow map of $X$ at the time $t$.
The following \emph{first variation formula} holds:
\begin{equation*}
\delta V [X] = \int_{G_k(\M)} \dive_S X(x) \, \dif V(x,S).
\end{equation*}
We now define the class of varifolds with \emph{bounded first variation}:
\begin{defi}
We say that a varifold $V \in \V_m(\M)$ has \emph{bounded first variation} in $\M$ if
\begin{equation}\label{eq:bounded_firsv_variation}
\sup \{|\delta V[X]| \mid X \in \Xm \,,
\max|X| \leq 1
\} < + \infty.
\end{equation}
If \eqref{eq:bounded_firsv_variation} holds with a proper subset of $\Xm$(e.g. $\Xc$, $\Xo$...) in place of $\Xm$, we say that $V$ has bounded first variation with respect to this subset.
\end{defi}
Therefore $V$ has bounded first variation if $\delta V \in \me(\M,\ru)$.
If $V$ has bounded first variation, then by Lebesgue decomposition there exist $\nds{V} \in \mep(\M)$, a $\nds{V}$-measurable function $\eta: \M \to \R^3$ and a $\norm{V}$-measurable function $H: \M \to \R^3$ such that
\begin{equation*}
\delta V[X]= - \int_\M {H}\cdot{X} \, \dif \norm{V} +
\int_\M {X}\cdot{\eta} \dif \nds{V}
\qquad
\forall X \in \Xm
\end{equation*}
where $\nds{V}$ is the singular part of $|\delta V|$ with respect to $\norm{V}$:
\begin{equation*}
\nds{V} = |\delta V|\llcorner Z
\qquad
Z=
\Big\{
x \in \M \mid \limsup_{r \to 0} \frac{|\delta V|(B_r(x))}{\norm{V}(B_r(x))} = +\infty
\Big\}.
\end{equation*}
Since the previous formula is similar to the corresponding one for smooth surfaces, we call $H$ the \emph{generalized mean curvature} of $V$, $\nds{V}$ the \emph{boundary measure} of $V$, the set $Z$ is the \emph{boundary} of $V$ and $\eta$ is the \emph{unit co-normal} of $V$.

\begin{defi}\label{def:varifold_generalized_curvature}
We say that $V \in \V_m(\M)$ has \emph{generalized mean curvature} $H$ with respect to $\Xc$ (respectively $\Xo$, $\Xt$) if for each $X \in \Xc$ (respectively $\Xo$, $\Xt$) the following formula holds:
\begin{equation}\label{eq:generalized_mean_curvature}
\int_{G_k(\M)} \dive_S X(x) \, \dif V(x,S)
=
- \int_\M {H}\cdot{X} \, \dif \norm{V}.
\end{equation}
\end{defi}
Thus $V$ has \emph{generalized mean curvature} with respect to $\Xc$ (respectively $\Xo$, $\Xt$) if has bounded variation with respect to $\Xc$ (respectively $\Xo$, $\Xt$) and $\delta V$ has no singular part with respect to $\nv$ when we test with vector fields in $\Xc$ (respectively $\Xo$, $\Xt$).
\begin{defi}[Varifold with free boundary]\label{def:varifold_free_boundary}
If $V \in \V_k(\M)$ has generalized mean curvature with respect to $\Xt$, we say that $V$ \emph{has free boundary} at $\pM$.
\end{defi}
\subsection{Capillarity free energy}
\begin{defi}[Capillarity free energy]
If $\om \subset \M$ is relatively open with finite perimeter and if $a \in [0,1)$, the \emph{$a$-capillarity energy} $\fa$ applied to $\om$ is defined as
\begin{equation*}
\fa(\om) = \hn(\bi{\om}) + a \hn(\bb \om),
\end{equation*}
\end{defi}

We now define $\opm$ as the class of all subsets of $\M$ which are open in the relative topology of $\M$ and whose internal and external boundary are $2$-dimensional submanifolds of class $C^2$ with $C^1$ boundary, that is
\begin{equation*}
\opm
=
\{
\om \subset \M \mid
\om \text{ is open in } \M, \,
\bi(\om) \text{ and } \bb(\om) \text{ are of class } C^2 \text{ with } C^1 \text{ boundary } \dbo
\},
\end{equation*}
where we recall that $\bio$ and $\bbo$ are respectively the internal and the external boundary of $\om$ previous defined and $\dbo$ denotes the common boundary of $\bio$ and $\bbo$ as manifolds.

If $\om \in \opm$, standard computations yield the first variation of $\fa$ with respect to $X \in \Xt$:
\begin{equation*}\label{eq:first_variation_formula_cap}
\dfa \om [X]
=
- \int_{\bio} {X}\cdot{H} \dif \hn
+ \int_{\dbo} {\con + a \z}\cdot{X} \dif \hu,
\end{equation*}
where $H$ is the mean curvature vector of $\bi \om$, $\con$ is the exterior unit conormal vector to $\bi \om$ and $\z$ is the exterior unit conormal vector to $\bb \om$.

In the following we need to extend our definition of capillarity functional to the set of pairs of varifolds $\cu$.
\begin{defi}[Capillarity functional for pairs of varifolds]
If $\pair \in \cu$ and if $a \in [0,1)$, the \emph{$a$-capillarity functional} $\fa$ applied to $\pair$ is defined as
\begin{equation*}
\fa\pair = \nv(\M) + a \nw(\pM).
\end{equation*}
\end{defi}

\section{Contact angle condition for varifolds}\label{sec3}
We now  discuss a notion of  contact angle condition for varifolds. Our condition follows the one given by Kagaya and Tonegawa in  \cite{kagaya2017}, but it is weaker since it allows the ``boundary part" to be just a $2$-varifold on $\pM$ (that is a positive Radon measure on $\pM$), whereas the one in \cite{kagaya2017} is more restrictive, since it fixes the boundary part to be a subset of $\pM$. We state our condition for pairs $(V,W)$ where $V \in \V_2(\M)$ and $W \in \V_2(\pM)$.

\begin{defi}[Contact angle condition]\label{def:contact_angle_condition}
Given $\theta \in [0,\frac{\pi}{2})$ we say that the pair $(V,W) \in \cu$ satisfies the contact angle condition $\theta$ if there exists a $\nv$-measurable vector field $H \in L^1(\M,\nv)$ such that, for every $X \in \Xt$, it holds
\begin{equation}\label{eq:def_contact_angle}
\begin{split}
\dfa \pair [X]
& =
\int_{\gnm} \dive_S X(x) \dif V(x,S)
+ a \int_{\gnpm} \dive_{\pM} X(x) \dif W(x,T_x \pM)
\\
& =
- \int_{\M} {X(x)}\cdot{H(x)} \dif \nv(x),
\end{split}
\end{equation}
where $a = \cos \theta$. Since we test only with vector fields in $\Xt$, we assume that $H(x) \in T_x \pM$ for $\nv$-a.e. $x \in \pM$.
\begin{defi}[Stationary pairs]
If $\theta \in [0,\frac{\pi}{2})$ and $a = \cos \theta$, we say that the pair $\pair \in \cu$ is \emph{stationary} for $\fa$ if it satisfies the contact angle condition \eqref{eq:def_contact_angle} with $H=0$. If $U \subset \M$ is relatively open in $\M$, we say that $\pair$ is stationary in $U$ if it satisfies \eqref{eq:def_contact_angle} with $H=0$ for every vector field $X \in \X_t$ that is compactly supported in $U$.
\end{defi}

We also give the definition of stability for pairs with respect to $\fa$.
\begin{defi}[Stable pairs]
If $\theta \in [0,\frac{\pi}{2})$ and $a = \cos \theta$, we say that the pair $\pair \in \cu$ is \emph{stable} for $\fa$ if it satisfies
\begin{equation}\label{eq:def_stability_fa}
\dtfa \pair [X] \geq 0
\qquad
\forall X \in \Xt,
\end{equation}
where $\dtfa$ is the second variation of $\fa$. If $U \subset \M$ is relatively open in $\M$, we say that $\pair$ is stable in $U$ if it satisfies \eqref{eq:def_stability_fa} for every vector field $X \in \X_t$ that is compactly supported in $U$.
\end{defi}

\end{defi}
\begin{rem}
To understand the intuition behind the previous definitions, we  recall that that a $C^1$-surface $\s \subset \M$ with $C^1$-boundary $\partial \s \subset \pM$ has fixed contact angle $\theta$ at $\pM$ if its unit exterior conormal $\eta_\s$ satisfies ${\con(x)}\cdot{N(x)} \equiv \sin \theta$ for every $x \in \partial \s$ (where $N$ is the exterior unit normal vector to $\pM$).

If $\om \in \opm$, then it easily seen that the surface $\bio \subset \M$ has fixed contact angle $\theta$ if and only if either $\con + a\z \perp \pM$ or $\con - a\z \perp \pM$, where $a = \cos \theta$, and $\eta$ and $\z$ are the unit conormal respectively of $\bi \om$ and $\bb \om$; thus $\bio$ has fixed contact angle $\theta$ if and only if
\begin{equation}
\int_{\bio} \dive_{\bio} X \dif \hn
\pm a \int_{\bbo} \dive_{\pM} X \dif \hn
=
- \int_{\bio} {X}\cdot{H} \dif \hn,
\end{equation}
where $\pm$ must be constantly $+$ or $-$ and $H$ is the mean curvature of $\bio$. Moreover, if $H \equiv 0$, then $\bi \om$ is a minimal surface which meets $\pM$ with angle $\theta$.
\end{rem}

\subsection{Bounded first variation and Monotonicity formula}\label{subsec:monotonicity}
All the results of this section are proved in Appendix \ref{sec:appendix_monotonicity}. We first state that, if $\pair \in \cu$ has contact angle $\theta$, then $V+aW$ has bounded first variation in $\M$.

\begin{proposition}\label{prop:bv_angle_varifolds}
Let $\pair \in \cu$ have contact angle $\theta$. Then $V + aW$ has bounded first variation. More precisely, there exist a positive Radon measure $\muv$ on $\pM$ and a continuous vector field $\tilde{H}$ such that
\begin{equation}\label{eq:total_fvf_varifold_angle}
\begin{split}
& \int_{G_2(\M)} \dive_S X(x) \dif \big( V(x,S) + aW(x,S) \big)
=
- \int_\M {X}\cdot{H} \dif \nv
\\
& \qquad
- \int_\M {X}\cdot{\tilde{H}} \dif (\nv + a \nw)
+ \int_{\pM} {X}\cdot{N} \dif \muv
\qquad
\forall X \in \Xm,
\end{split}
\end{equation}
where $\tilde{H}(x)= -N(x) \dive_{\pM}N(x)$ for every $x \in \pM$. Moreover, the following global and local estimates hold:
\begin{equation}\label{eq:estimate_muv_global}
\muv (\pM)
\leq
c \nv(\M) + \int_\M |H| \dif \nv;
\end{equation}
\begin{equation}\label{eq:estimate_measure_boundary}
\muv \big(B_{r/2}(x_0)\big)
\leq
\frac{c}{r} \nv \big( B_r(x_0) \big)
+
\int_{B_r(x_0)} |H| \dif \nv
\qquad
\forall x_0 \in \pM,
\,
\forall r \leq R(\M),
\end{equation}
where the constant $c=c(\M)$ depends only on the second fundamental form of $\pM$ and $R(\M)$ is the minimum radius of curvature of $\pM$.
\end{proposition}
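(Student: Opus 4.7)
The strategy is to reduce testing $\delta(V+aW)$ against an arbitrary $X\in\Xm$ to testing against a tangential field in $\Xt$ where the contact angle condition \eqref{eq:def_contact_angle} applies directly; the normal component of $X$ will then contribute the boundary term that defines $\muv$. First, I would exploit the $\breg$ regularity of $\pM$ to extend $N$ to a $C^{1,\alpha}$ vector field $\bar N$ on $\M$, via the nearest-point projection on a tubular neighborhood of $\pM$ composed with a smooth cut-off, and set $\tilde H := -\bar N\,\dive\bar N$. A direct computation shows $\tilde H$ is continuous on $\M$ and agrees with $-N\dive_{\pM}N$ on $\pM$.

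Given $X\in\Xm$, decompose $X = X_T + f\bar N$ with $f := X\cdot\bar N$, so that $X_T \cdot N = 0$ on $\pM$ and hence $X_T\in\Xt$. The contact angle condition applied to $X_T$ gives
\[
\delta V[X_T] + a\,\delta W[X_T] = -\int_\M X_T\cdot H\,d\nv,
\]
and the missing contribution of $f\bar N$ is computed explicitly. Since $\nabla_{\pM}f\cdot N = 0$, we have $\dive_{\pM}(fN) = f\dive_{\pM}N$, hence $a\,\delta W[f\bar N] = -a\int X\cdot\tilde H\,d\nw$. Expanding $\dive_S(f\bar N) = \nabla_S f\cdot\bar N + f\dive_S\bar N$ and rewriting $f\dive_S\bar N = -X\cdot\tilde H + f\dive_{S^\perp}\bar N$ produces the $-\int X\cdot\tilde H\,d\nv$ contribution plus a residual functional involving only first derivatives of $f$ tested against $V$. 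To identify this residual as $\int X\cdot N\,d\muv$ for some $\muv\in\mep(\pM)$, I would use that testing the contact angle condition against fields in $\Xc\subset\Xt$ implies that $V$ has generalized mean curvature $H$ in $\intm$, and then pass to the limit in a cut-off approximation $\phi_\epsilon f\bar N$ with $\phi_\epsilon$ compactly supported in $\intm$. A Riesz-type representation of the resulting boundary functional, together with positivity on non-negative test functions, yields $\muv$.

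The estimates \eqref{eq:estimate_muv_global} and \eqref{eq:estimate_measure_boundary} then follow from \eqref{eq:total_fvf_varifold_angle} by plugging in $X = \phi\bar N$. Choose $\phi\in C^1_c(\R^3)$ equal to $1$ on $B_{r/2}(x_0)$, vanishing outside $B_r(x_0)$, with $|\nabla\phi|\le 2/r$; the $\nw$-contributions cancel between the LHS and the $\tilde H$-term on the RHS by the identity for $\dive_{\pM}(fN)$ recorded above, leaving
\[
\int_{\pM}\phi\,d\muv = \int_{\gnm}\dive_S(\phi\bar N)\,dV + \int_\M \phi\,\bar N\cdot(H + \tilde H)\,d\nv.
\]
Bounding $|\dive_S(\phi\bar N)|\le 2/r + \|D\bar N\|_\infty$ and $|\tilde H|\le C(\M)$, and absorbing the zero-order term using $1\le R(\M)/r$, gives \eqref{eq:estimate_measure_boundary}; the global bound \eqref{eq:estimate_muv_global} is the analogous computation with $\phi\equiv 1$ and $X = \bar N$.

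The main obstacle is the construction of $\muv$ in the second step: one must verify that the candidate boundary functional is genuinely represented by a \emph{non-negative Radon measure} on $\pM$, rather than a distribution of higher order concentrating at $\pM$. This requires uniform control of the cut-off integrals as $\epsilon\to 0$ and careful use of the interior generalized mean curvature of $V$ to close the limiting procedure.
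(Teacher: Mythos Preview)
Your approach is essentially the one in the paper: split $X=X^T+X^\perp$ with $X^T\in\Xt$, apply the contact angle condition \eqref{eq:def_contact_angle} to $X^T$, and treat the normal component $X^\perp$ separately. The only difference is that the paper does not carry out the normal-part analysis by hand; it observes that the contact angle condition restricted to $\Xo$ gives $V$ (and $V+aW$) generalized mean curvature $H\ind_{\interior{\M}}$ with respect to $\Xo$, and then invokes \cite[Theorem~1.1 and (3.12)]{demasi2021rectifiability} as a black box to obtain the measure $\muv$, the form $\tilde H=-N\dive_{\pM}N$, and both estimates \eqref{eq:estimate_muv_global}--\eqref{eq:estimate_measure_boundary} simultaneously. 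Your sketch of the construction of $\muv$ via cut-offs and Riesz representation, and of the estimates by testing with $\phi\bar N$, is precisely the content of that cited theorem, and the obstacle you single out (that the limiting boundary functional is a \emph{non-negative measure} rather than a higher-order distribution) is the substantive point that theorem resolves. One minor correction: in your expansion you should get $f\dive_S\bar N=-X\cdot\tilde H-f\dive_{S^\perp}\bar N$ (sign on the last term), though this does not affect the argument.
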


As in \cite{kagaya2017}, a monotonicity formula at points on $\pM$ holds for a pair $(V,W)$ that satisfies the contact angle condition. The following monotonicity is slightly different from the one in \cite{kagaya2017}, since it does not involve reflections over $\pM$.

\begin{proposition}\label{prop:monotonicity_formula_p}
Suppose $(V,W) \in \cu$ satisfies the contact angle condition with $H \in L^p(\M,\nv)$ for some $p \in (2,+\infty)$. Then there exists $\Lambda = \Lambda\big(p,\M, (\nv+a\nw)(\M), \norm{H}_{L^p}\big)>0$ such that, for all $x_0 \in \pM$ the function

\begin{equation}\label{eq:monotonicity_mass_p}
\rho
\longmapsto
e^{\Lambda \rho}
\left[
\bigg(
\frac{\nv\big( B_\rho(x_0)\big) + a\nw \big( B_\rho(x_0)\big)}{\rho^2}
\bigg)^{\frac{1}{p}}
+ \Lambda \rho^{1- \frac{2}{p}}
\right]
\end{equation}
is monotone increasing.
\end{proposition}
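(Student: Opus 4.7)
The approach is to apply the first variation identity \eqref{eq:total_fvf_varifold_angle} to the radial cutoff vector field $X(x)=\phi(|x-x_0|/\rho)(x-x_0)$, where $\phi:[0,+\infty)\to[0,1]$ is smooth non-increasing with $\phi\equiv1$ on $[0,1/2]$ and $\phi\equiv0$ on $[1,+\infty)$, and then to turn the resulting identity into an ODI for $f(\rho):=I(\rho)/\rho^2$, where $I(\rho):=(\nv+a\nw)(B_\rho(x_0))$. The standard divergence computation, decomposing $|x-x_0|^2=|P_S(x-x_0)|^2+|P_{S^\perp}(x-x_0)|^2$ in $\dive_S X$ and discarding the non-negative term coming from $\phi'\leq0$, yields for $J_\phi(\rho):=\int\phi(|x-x_0|/\rho)\,d(\nv+a\nw)$
\[
\frac{d}{d\rho}\!\left(\frac{J_\phi(\rho)}{\rho^2}\right)\geq\frac{1}{\rho^3}\!\int X\cdot H\,d\nv+\frac{1}{\rho^3}\!\int X\cdot\tilde H\,d(\nv+a\nw)-\frac{1}{\rho^3}\!\int X\cdot N\,d\muv.
\]

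The three error terms on the right are estimated as follows. H\"older with $p>2$ gives $\bigl|\tfrac{1}{\rho^3}\!\int X\cdot H\,d\nv\bigr|\leq\norm{H}_{L^p}\rho^{-2/p}f(\rho)^{1-1/p}$. The uniform bound $|\tilde H|\leq C(\M)$ coming from the second fundamental form of $\pM$ gives $\bigl|\tfrac{1}{\rho^3}\!\int X\cdot\tilde H\,d(\nv+a\nw)\bigr|\leq C(\M)f(\rho)$. For the boundary term, two refinements are used: first the $C^{2,\alpha}$-regularity of $\pM$ yields the second-order bound $|(x-x_0)\cdot N(x)|\leq C(\M)|x-x_0|^2$ on $\pM$, which gains one power of $\rho$ and so gives $\bigl|\int X\cdot N\,d\muv\bigr|\leq C\rho^2\muv(B_\rho)$; then \eqref{eq:estimate_measure_boundary} applied at radius $2\rho$, together with H\"older on the $|H|$ integral, controls $\muv(B_\rho)$ in terms of $\nv(B_{2\rho})$ and $\norm{H}_{L^p}$, reducing the boundary contribution to the same functional scheme as the interior terms. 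Absorbing the global mass and $L^p$ bounds into constants $C_1,C_2$ depending on $p,\M,\mu(\M),\norm{H}_{L^p}$, one arrives at
\[
f'(\rho)\geq -C_1 f(\rho)-C_2\rho^{-2/p}f(\rho)^{1-1/p},
\]
and passing to $g:=f^{1/p}$ linearises this to $g'\geq-\tfrac{C_1}{p}g-\tfrac{C_2}{p}\rho^{-2/p}$.

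Choosing $\Lambda\geq\max\{C_1/p,\,C_2/(p-2)\}$, a direct computation gives
\[
\frac{d}{d\rho}\!\Big[e^{\Lambda\rho}\big(g(\rho)+\Lambda\rho^{1-2/p}\big)\Big]=e^{\Lambda\rho}\!\left[\big(\Lambda-\tfrac{C_1}{p}\big)g+\Lambda^2\rho^{1-2/p}+\big(\Lambda(1-2/p)-\tfrac{C_2}{p}\big)\rho^{-2/p}\right]\geq0,
\]
which is exactly the monotonicity of \eqref{eq:monotonicity_mass_p}; the passage from $J_\phi$ to $I$ is by approximation $\phi\to\ind_{[0,1]}$. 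The main technical obstacle is that by convexity of $\M$ one has $(x-x_0)\cdot N(x)\geq0$ for every $x,x_0\in\pM$, so the boundary integral is \emph{non-negative} and has the wrong sign to help monotonicity directly. Two facts rescue the argument: the second-order bound $|(x-x_0)\cdot N(x)|\lesssim|x-x_0|^2$ from the $C^{2,\alpha}$-regularity of $\pM$ (which gains the crucial extra power of $\rho$) and the estimate \eqref{eq:estimate_measure_boundary} (which converts $\muv$-mass into $\nv$-mass), which together make this bad contribution of the same order as the interior error terms, at the price of enlarging $\Lambda$.
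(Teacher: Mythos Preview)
Your overall strategy---test \eqref{eq:total_fvf_varifold_angle} with the radial cutoff field, derive an ODI for $f(\rho)=(\nv+a\nw)(B_\rho)/\rho^2$, linearise via $g=f^{1/p}$, and integrate---is the same as the paper's, and your treatment of the $H$ and $\tilde H$ terms is fine. The problem is your handling of the boundary term $\rho^{-3}\int X\cdot N\,d\muv$. After the correct second-order bound $|(x-x_0)\cdot N|\le C|x-x_0|^2$ you invoke \eqref{eq:estimate_measure_boundary}, which controls $\muv(B_\rho)$ by $\tfrac{c}{2\rho}\nv(B_{2\rho})+\int_{B_{2\rho}}|H|\,d\nv$. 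The resulting error is therefore of the form
\[
\frac{C}{\rho^2}\,\nv(B_{2\rho})\;+\;\frac{C}{\rho}\,\|H\|_{L^p}\,\nv(B_{2\rho})^{1-1/p}\;\sim\;C\,f(2\rho)\;+\;C\,\rho^{-2/p}f(2\rho)^{1-1/p},
\]
which involves $f(2\rho)$, not $f(\rho)$. This is a delay-type functional inequality, not an ODI, and it does \emph{not} integrate to the claimed monotonicity: you cannot absorb $f(2\rho)$ into the left side without already knowing a doubling bound, which is essentially what you are trying to prove. Replacing the local estimate by the global one \eqref{eq:estimate_muv_global} does not help either, since it produces an error of order $\rho^{-1}$, too singular to be compensated by the $\Lambda\rho^{1-2/p}$ correction when $p>2$.

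The paper closes this gap by a second, independent application of the first variation: instead of quoting \eqref{eq:estimate_measure_boundary}, it tests \eqref{eq:normal_variation_V} with the \emph{normal} field $X(x)=-\gamma(|x|/\rho)\nabla d(x)$ (where $d=\dist(\cdot,\pM)$). This yields a differential identity for $\rho^{-1}\int\gamma(|x|/\rho)\,d\muv$ in which the right-hand side involves only the mass at scale $\rho$ \emph{and} the derivative $\tfrac{d}{d\rho}\bigl(\rho^{-2}\int\gamma\,d(\nv+a\nw)\bigr)$ itself. That extra derivative term is then moved to the left, producing the factor $(1+c\rho)$ in front of $f'$ in the monotonicity inequality (Lemma~\ref{lmm:monotonicity_inequality_s}). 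From there the passage to $g=f^{1/p}$ and the integration to \eqref{eq:monotonicity_mass_p} go through exactly as you describe. So the missing ingredient in your argument is this second test field, which replaces the scale-mismatched black-box estimate \eqref{eq:estimate_measure_boundary} by a genuine differential relation at the single scale $\rho$.
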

The proof is a straightforward modification of the one of \cite[Corollary 4.8]{demasi2021rectifiability} and is reported in Appendix \ref{sec:appendix_monotonicity}.
If $H \in L^\infty(\M,\nv)$,  we have the following result.
\begin{proposition}\label{prop:monotonicity_formula_infty}
Suppose $(V,W) \in \cu$ satisfies the contact angle condition with $H \in L^\infty(\M,\nv)$. Then there exists $\Lambda = \Lambda (\M,\norm{H}_{L^\infty})>0$ such that, for all $x_0 \in \pM$ the function
\begin{equation}\label{eq:monotonicity_mass_infty}
\rho
\longmapsto
e^{\Lambda \rho}
\frac{\nv\big( B_\rho(x)\big) + a\nw \big( B_\rho(x_0)\big)}{\rho^2}
\end{equation}
is monotone increasing.
\end{proposition}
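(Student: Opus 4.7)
The plan is to adapt the classical Allard-type monotonicity argument to the first variation identity \eqref{eq:total_fvf_varifold_angle}, using the convexity of $\M$ to control the boundary measure $\sigma_V$. Setting $f(\rho) := (\nv + a\nw)(B_\rho(x_0))$ and $r := |x-x_0|$, I would test \eqref{eq:total_fvf_varifold_angle} against the radial cutoff vector field
\[
X_\rho(x) := (x-x_0)\,\eta\!\left(\frac{r}{\rho}\right),
\]
where $\eta\colon[0,\infty)\to[0,1]$ is smooth, non-increasing, and approximates $\ind_{[0,1]}$. Since \eqref{eq:total_fvf_varifold_angle} holds for arbitrary $X\in\Xm$, the fact that $X_\rho$ need not be tangent to $\pM$ is no obstacle. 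The standard computation of $\dive_S X_\rho$, followed by the decomposition $|P_S(x-x_0)|^2 = r^2 - |P_{S^\perp}(x-x_0)|^2$, shows (after integration against $V+aW$ and passing to the limit $\eta \nearrow \ind_{[0,1]}$) that the left-hand side of \eqref{eq:total_fvf_varifold_angle} dominates $2f(\rho) - \rho f'(\rho)$, with $f'$ interpreted distributionally (which is meaningful since $f$ is monotone BV).

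On the right-hand side of \eqref{eq:total_fvf_varifold_angle}, the contributions of $H$ and $\tilde H$ are controlled by $\Lambda_0 \rho f(\rho)$ with $\Lambda_0 = \Lambda_0(\M, \|H\|_{L^\infty})$, since $|X_\rho| \le \rho$, $H \in L^\infty(\nv)$, and $\|\tilde H\|_\infty \le c(\M)$ by the $C^{2,\alpha}$-regularity of $\pM$. The delicate term is the boundary contribution $\int_{\pM} X_\rho \cdot N \dif \sigma_V$, where the convexity of $\M$ enters twice: first, the inclusion $\M \subset \{y : (y-x)\cdot N(x) \le 0\}$ for every $x \in \pM$ yields $(x-x_0)\cdot N(x) \ge 0$ on $\pM$, giving the integral a favorable (non-negative) sign; second, the $C^{2,\alpha}$-regularity of $\pM$ implies the quadratic vanishing $(x-x_0) \cdot N(x) \le c_\M r^2$ at the boundary point $x_0$, so that
\[
0 \le \int_{\pM} X_\rho \cdot N \dif \sigma_V \le c_\M\,\rho^2\,\sigma_V\big(B_\rho(x_0)\big),
\]
which by the estimate \eqref{eq:estimate_measure_boundary} of Proposition \ref{prop:bv_angle_varifolds} (applied with radius $2\rho$ and with the mean curvature term controlled by $\|H\|_{L^\infty}$) is ultimately controlled by a constant multiple of $\rho\,f(\rho)$, after a standard bootstrap in $\rho$ to absorb the $f(2\rho)$ produced by \eqref{eq:estimate_measure_boundary} into $f(\rho)$ on a sufficiently small scale.

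Combining these estimates yields the Gronwall-type differential inequality $2 f(\rho) - \rho f'(\rho) \le \Lambda \rho f(\rho)$, equivalently
\[
\frac{d}{d\rho}\!\left[e^{\Lambda \rho}\,\frac{f(\rho)}{\rho^2}\right] \ge 0,
\]
with $\Lambda = \Lambda(\M, \|H\|_{L^\infty})$, which integrates to the claimed monotonicity. I expect the main technical obstacle to be the control of the boundary measure contribution: both the correct sign (coming from convexity of $\M$) and the quadratic vanishing of $(x-x_0)\cdot N$ on $\pM$ (coming from $C^{2,\alpha}$-regularity) are needed to extract a bound linear in $\rho$ that can be absorbed into the exponential factor, and the estimate \eqref{eq:estimate_measure_boundary} of Proposition \ref{prop:bv_angle_varifolds} relating $\sigma_V$ to $\nv$ is indispensable. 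The remaining ingredients — smoothing of the cutoff $\ind_{[0,1]}$, interpretation of $f'$ in the BV sense, and integration of the Gronwall inequality — follow the classical Allard-type monotonicity scheme without further complications.
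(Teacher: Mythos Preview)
Your overall setup --- test \eqref{eq:total_fvf_varifold_angle} with the radial cutoff and aim for a Gronwall inequality on $g(\rho)=\rho^{-2}f(\rho)$ --- matches the paper. But there is a genuine gap in how you control the boundary term $\int_{\pM}X_\rho\cdot N\,d\sigma_V$, and the role you assign to convexity is misplaced.

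On convexity: the sign $(x-x_0)\cdot N(x)\ge 0$ is not favorable. In the monotonicity identity this term enters as $-\rho^{-3}\int_{\pM}X_\rho\cdot N\,d\sigma_V$ on the right-hand side of the lower bound for $g'(\rho)$, so its non-negativity pushes the bound the wrong way. Only the quadratic vanishing $|(x-x_0)\cdot N(x)|\le c|x-x_0|^2$ is actually used, and that comes from $C^2$ regularity of $\pM$; convexity plays no role in this proposition.

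The substantive gap is your appeal to \eqref{eq:estimate_measure_boundary}. That estimate controls $\sigma_V(B_\rho)$ by $c\rho^{-1}\nv(B_{2\rho})+\dots$, so after the quadratic bound you arrive at the non-local inequality
\[
g'(\rho)\ \ge\ -\Lambda_0\,g(\rho)\ -\ C\,g(2\rho).
\]
The ``standard bootstrap'' you invoke to absorb $g(2\rho)$ is not standard. A careful dyadic iteration does give $\sup_\rho g(\rho)<\infty$ (the interval lengths shrink geometrically), but upgrading boundedness to the clean statement that $e^{\Lambda\rho}g(\rho)$ is non-decreasing would require a doubling bound $g(2\rho)\le Kg(\rho)$, which is unavailable without a lower bound on $g$. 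At best you recover an almost-monotonicity $e^{\Lambda_0\rho}\bigl[g(\rho)+C_0\bigr]$ increasing, with $C_0$ depending on the total mass --- neither the form nor the dependence asserted in the proposition.

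The paper avoids the scale mismatch by not invoking \eqref{eq:estimate_measure_boundary} at all. Instead it tests the normal first-variation identity \eqref{eq:normal_variation_V} a second time, with the vector field $X=-\gamma(|x|/\rho)\,\nabla d(x)$, obtaining a \emph{same-scale} estimate
\[
\frac{1}{\rho}\int\gamma\,d\sigma_V\ \le\ \frac{c}{\rho^{2}}\int\gamma\,d(\nv+a\nw)\ +\ 2\rho\,\frac{d}{d\rho}\Bigl[\rho^{-2}\!\int\gamma\,d(\nv+a\nw)\Bigr]\ +\ \text{(curvature terms)}.
\]
The derivative term is absorbed into the left-hand side (this is the origin of the factor $(1+c\rho)$ in Lemma~\ref{lmm:monotonicity_inequality_s}), and one reaches $g'+\Lambda g\ge 0$ directly with $\Lambda=\Lambda(\M,\|H\|_{L^\infty})$. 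Replacing your use of \eqref{eq:estimate_measure_boundary} by this second test with $\nabla d$ closes the argument.
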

As above, the proof is very similar to the one of \cite[Corollary 4.9]{demasi2021rectifiability} and is sketched in Appendix \ref{sec:appendix_monotonicity}.

\section{Existence of non-trivial stationary pairs}\label{sec:4}
\subsection{Sweepouts}\label{subsec:sweepouts}

In this section we define the basic elements of the min-max procedure.

\begin{defi}\label{def:paramfamily}
A family $\swo$ is a \emph{parametrized smooth family of surfaces} if it satisfies the following properties:
\begin{enumerate}
\item For every $t \in [0,1]$ $\s_t$ is a closed subset of $\M$ of finite $\hn$-measure;
\item \label{pnt:smoothhypersurface} For every $t \in [0,1]$ there exists a finite set $S_t$ such that $\s_t \setminus S_t$ is a smooth surface with boundary $\partial \s_t \subset \partial \M$ which is smooth outside of $S_t$
\item \label{pnt:hausdorffdistance}if $t \to s$, then
$\sup_{x \in \s_t} \dist(x, \s_s) \to 0$;
\item \label{pnt:smoothcompact}if $U \cc \M \setminus S_s$, then $\s_t$ converges in the smooth sense to $\s_s$ in $U$ as $t \to s$;
\item the function $t \mapsto \hn(\s_t)$ is continuous.
\end{enumerate}
\end{defi}

A first elementary consequence of the definition of parametrized family is that the area of the surfaces converge also locally. This is useful in the following, then we record it in the following lemma.
\begin{lemma}\label{lmm:convergence_area_any_open}
Let $\{\s_t\}_{t \in \I}$ be a smooth parametrized family of surfaces. Then, for every open set $U \subset \M$, it holds
\begin{equation}
\lim_{t \to s} \hn(\s_t \cap U)
=
\hn(\s_s \cap U).
\end{equation}
\begin{proof}
For every $\e >0$, there exists $W \cc \M \- S_s$ such that $\hn(\s_s \cap W) > \hn(\s_s) - \e$. By smooth convergence in $W$, we have that
\begin{equation}
\lim_{t \to s} \hn(\s_t \cap U \cap W)
=
\hn(\s_s \cap U \cap W),
\qquad
\lim_{t \to s} \hn(\s_t \cap (W \- U))
=
\hn(\s_s \cap (W \- U))
\end{equation}
Since $\hn(\s_t) \to \hn(\s_s)$ by the choice of $W$ we have also $|\hn(\s_t \cap U \- W) - \hn(\s_s \cap U \- W)| < \e$ for $|t-s|$ sufficiently small. This implies that $|\hn(\s_t \cap U) - \hn(\s_s \cap U)| < \e$ for $|t-s|$ sufficiently small.
\end{proof}

\end{lemma}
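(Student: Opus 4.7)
The strategy is to reduce the claim to property (4) of the definition of parametrized smooth family by cutting out a tiny neighborhood of the singular set $S_s$, and then to use the continuity of the total area (property (5)) to control what happens in that excised region. Since $S_s$ is finite and $\hn(\s_s)<\infty$, for every $\e>0$ I can find an open set $W$ with $W\cc \M\setminus S_s$ such that
\[
\hn(\s_s\cap W)>\hn(\s_s)-\e,
\]
simply because $\s_s\setminus S_s$ has full $\hn$-measure in $\s_s$ and can be exhausted by compact subsets avoiding the finite set $S_s$.

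Next, I would apply hypothesis (4) on the open sets $W\cap U$ and $W\setminus\clos{U}$, both of which are compactly contained in $\M\setminus S_s$. The smooth convergence of $\s_t$ to $\s_s$ on these open sets (or rather on slightly larger precompact sets still avoiding $S_s$) yields
\[
\hn(\s_t\cap W\cap U)\longrightarrow \hn(\s_s\cap W\cap U),\qquad \hn(\s_t\cap W\setminus\clos{U})\longrightarrow \hn(\s_s\cap W\setminus\clos{U}).
\]
Summing these two limits gives $\hn(\s_t\cap W)\to \hn(\s_s\cap W)$, and combining with property (5), $\hn(\s_t)\to\hn(\s_s)$, I deduce
\[
\hn(\s_t\setminus W)\longrightarrow \hn(\s_s\setminus W)<\e.
\]

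Finally, I decompose
\[
\hn(\s_t\cap U)=\hn(\s_t\cap W\cap U)+\hn\bigl(\s_t\cap U\setminus W\bigr),
\]
and analogously for $s$. The first term converges to $\hn(\s_s\cap W\cap U)$ by the step above, while the two remainder terms are each bounded above by $\hn(\s_t\setminus W)$ and $\hn(\s_s\setminus W)$ respectively, both of which are $<2\e$ for $|t-s|$ small. Hence $\limsup_{t\to s}|\hn(\s_t\cap U)-\hn(\s_s\cap U)|\le 4\e$, and letting $\e\downarrow 0$ closes the argument.

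I do not expect any real obstacle: the only subtle point is to verify that one can indeed arrange $W\cc \M\setminus S_s$ carrying almost all the area of $\s_s$, and this is immediate from finiteness of $S_s$ together with $\hn(\s_s)<\infty$. Everything else is a bookkeeping exercise combining local smooth convergence on $W$ with global continuity of $t\mapsto\hn(\s_t)$.
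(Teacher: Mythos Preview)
Your proposal is correct and follows essentially the same approach as the paper: pick $W\cc\M\setminus S_s$ carrying almost all of $\hn(\s_s)$, use smooth convergence in $W$ to handle $\s_t\cap W\cap U$, and invoke the global continuity of $t\mapsto\hn(\s_t)$ to bound the remainder outside $W$. The only cosmetic difference is that you split $W$ into $W\cap U$ and $W\setminus\clos U$ (so your ``summing'' step tacitly ignores $W\cap\partial U$), whereas the paper uses $W\setminus U$; this is harmless and the overall argument is the same.
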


We often refer to a parametrized smooth family of surfaces simply as a family of surfaces.

\begin{defi}[Sweepout]\label{def:sweepout}
A pair $\cswo$ of parametrized family of surfaces  with (finite) singular sets $S_t$ and $G_t$ is called a \emph{sweepout} if there exists a family of open sets $\oswo$ in the relative topology of $\M$ that satisfy the following properties:
\begin{enumerate}
\item\label{item:sw_contained} for every $t \in \I$ $\s_t \subset \M$ and $\g_t \subset \pM$;
\item\label{item:sw_boundary} for every $t \in \I$ $\s_t \cap \g_t = \de \s_t = \de \g_t$ (here the boundary is intended in the sense of differential topology);
\item\label{item:sw_internal} for every $t \in [0,1]$, $\biot \- \s_t \subset S_t$;
\item\label{item:sw_coincide} for every $t \in \I$, $\big(\bbot \- (\s_t \cup \g_t) \big) \cup \big( (\s_t \cap \pM) \- \bbot \big) \cup \big( \g_t \- \bbot \big) \subset S_t \cup G_t$;
\item\label{item:sw_open_fill} $\om_0=\emptyset$ and $\om_1= \M$;
\item\label{item:sw_vol_continuous} if $t \to s$, then $\vol(\om_t \setminus \om_s)+\vol (\om_s \setminus \om_t) \to 0$.
\end{enumerate}
\end{defi}
As it is stated in \cite[Proposition 0.4]{delellis2009existence} sweepouts always exist.

\begin{defi}[Pair compatible with open set]\label{def:compatible_pair_open}
A pair of surfaces $\pas$  and an open set $\om$ are said to be \emph{compatible} if the properties \ref{item:sw_contained}, \ref{item:sw_boundary}, \ref{item:sw_internal},  \ref{item:sw_coincide} and \ref{item:sw_vol_continuous} of Definition \ref{def:sweepout} holds with $\pas$, $\om$, $S$, $G$ in place of $\cswo$, $\oswo$, $S_t$, $G_t$.
\end{defi}

\begin{defi}
Two sweepouts $\cswo$ and $(\tilde{\s}_t, \tilde{\g}_t)_{t \in \I}$ are \emph{homotopic} if there exists a parametrized family of surfaces $(\s_{(t,s)},\g_{(t,s)})_{(t,s) \in [0,1]^2}$ such that $\s_{(t,0)} = \s_t$, $\g_{(t,0)}= \g_t$, $\s_{(t,1)}= \tilde{\s}_t$ and $\g_{(t,1)}= \tilde{\g}_t$ for every $t \in [0,1]$.
\end{defi}

\begin{defi}
A non-empty set of sweepouts $\sw$ is \emph{homotopically closed} if it contains the homotopy class of all its elements.
\end{defi}
From now on, with $\sw$ we always denote a homotopically closed set of sweepouts.


\begin{defi}
If $\sw$ is an homotopically closed set of sweepouts, the \emph{min-max value of $\sw$} is
\begin{equation}
\m(\sw) = \inf \{\ms \fa \ct{t} \mid \cswo \in \sw \}
\end{equation}
\end{defi}

\begin{defi}
A sequence $\{(\s_t^i,\g_t^i)_{t \in \I}\}^{i \in \N}$ of sweepouts in $\sw$ is a \emph{minimizing sequence} if
\begin{equation}
\lim_{i \to \infty} \ms \fa(\s_t^i,\g_t^i) = \m(\sw).
\end{equation}
If $(\s_t^i,\g_t^i)_{t \in \I}^{i \in \N}$ is a minimizing sequence of sweepouts, a sequence $(\s^i,\g^i):=(\s_{t_i}^i, \g_{t_i}^i)$ (where $t_i \in \I$ for every $i \in \N$) is a \emph{min-max sequence} if
\begin{equation}
\lim_{i \to \infty} \fa \big( \s^i, \g^i\big)
=
\m(\sw),
\end{equation}
\end{defi}

\subsection{Pull-tight procedure} We now want to prove that there exists a minimizing sequence of sweepouts such that every min-max sequence converges to a pair which is stationary for the capillarity functional. From now on, we consider fixed the homotopically closed set of sweepouts $\sw$ and we call $\m=\m(\sw)$.

\begin{theorem}[Pull-tight procedure]\label{thm:pull-tight_procedure}
There exists a minimizing sequence of sweepouts $\{\cswok\}^{k \in \N}$ such that every min-max sequence converges, up to subsequences, to a pair $(V,W) \in \cu$ which is stationary for the capillarity functional.
\end{theorem}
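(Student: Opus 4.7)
The plan is to adapt the classical pull-tight construction of Pitts/Colding--De Lellis to the setting of pairs of varifolds $(V,W) \in \cu$ and the capillarity functional $\fa$, with the additional constraint that all deformations are generated by vector fields in $\Xt$ so that the ambient decomposition $\M = \intm \cup \pM$ is respected.

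First I would isolate the compact set where pull-tight actually acts. Let
\[
\mathcal K := \{(V,W) \in \cu : \fa(V,W) \le 4\m\}.
\]
Since the total masses of $V$ and $W$ are uniformly bounded on $\mathcal K$, the weak-$*$ topology on $\mathcal K$ is metrizable by a metric $d$, and every minimizing sequence of sweepouts has, up to a uniform cutoff in $t$, its associated varifold pairs $(V_t^k,W_t^k)$ in $\mathcal K$. Let $\mathcal K_\infty \subset \mathcal K$ be the closed subset of stationary pairs; the goal is to produce a new minimizing sequence whose min-max slices accumulate only on $\mathcal K_\infty$.

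The main technical step is to build a continuous pull-tight map. For each non-stationary $(V_0,W_0)$, Definition \ref{def:contact_angle_condition} gives an $X_{(V_0,W_0)} \in \Xt$ with $\|X\|_{C^1} \le 1$ such that $\dfa(V_0,W_0)[X_{(V_0,W_0)}] < 0$. By weak-$*$ continuity of the first variation at a fixed vector field, this inequality persists on a neighborhood $U_{(V_0,W_0)} \subset \mathcal K$. I would cover $\mathcal K \setminus \mathcal K_\infty$ by countably many such neighborhoods that are locally finite and whose closures are disjoint from $\mathcal K_\infty$ at distance-scale controlled by $d(\cdot,\mathcal K_\infty)$, then glue the associated vector fields via a continuous partition of unity to obtain a continuous map
\[
\mathcal K \ni (V,W) \longmapsto X_{(V,W)} \in \Xt,
\]
vanishing exactly on $\mathcal K_\infty$, with $\dfa(V,W)[X_{(V,W)}] \le -\gamma(V,W)$ for a continuous function $\gamma \ge 0$ vanishing precisely on $\mathcal K_\infty$. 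Integrating the flow of $X_{(V,W)}$ (which stays inside $\M$ and restricts to a flow of $\pM$ because $X_{(V,W)} \in \Xt$) for times $s \in [0,1]$ produces a continuous family of isotopies $\psi^{(V,W)}_s$ of $\M$ with $\psi^{(V,W)}_s \equiv \id$ whenever $(V,W) \in \mathcal K_\infty$; shortening the flow time if necessary (uniformly on $\mathcal K$) one gets a universal $\delta > 0$ with
\[
\fa\bigl((\psi^{(V,W)}_1)_\sharp V,(\psi^{(V,W)}_1)_\sharp W\bigr) \le \fa(V,W) - \delta\,\gamma(V,W).
\]

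Given a minimizing sequence $\{(\s_t^k,\g_t^k)\}_{t\in\I}$ of sweepouts, set $\psi_t^k := \psi^{(V_t^k,W_t^k)}_1$ and define the deformed family
\[
(\tilde\s_t^k,\tilde\g_t^k) := \bigl(\psi_t^k(\s_t^k),\,\psi_t^k(\g_t^k)\bigr), \qquad \tilde\om_t^k := \psi_t^k(\om_t^k).
\]
Because $t \mapsto (V_t^k,W_t^k)$ is weak-$*$ continuous (this uses Lemma \ref{lmm:convergence_area_any_open} and property \ref{pnt:smoothcompact} of Definition \ref{def:paramfamily}), the isotopies $\psi_t^k$ depend continuously on $t$, and each $\psi_t^k$ fixes $\pM$ setwise; this is enough to verify every clause of Definition \ref{def:sweepout} for $(\tilde\s_t^k,\tilde\g_t^k)$ together with $\tilde\om_t^k$, and the intermediate family $s \mapsto \psi^{(V_t^k,W_t^k)}_s$ exhibits the new sweepout as homotopic to the old one, so it still lies in $\sw$. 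Since $\fa$ cannot increase, the deformed family is still minimizing.

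Finally, I would prove the stationarity of all min-max limits by contradiction. Suppose $(V,W) \in \cu$ is a weak-$*$ limit of $(\tilde V_{t_k}^k,\tilde W_{t_k}^k)$ for some min-max sequence and is not stationary. Then $\gamma(V,W) > 0$, and by continuity $\gamma(V_{t_k}^k,W_{t_k}^k) \ge \gamma(V,W)/2$ for $k$ large, hence
\[
\fa(\tilde\s_{t_k}^k,\tilde\g_{t_k}^k) \le \fa(\s_{t_k}^k,\g_{t_k}^k) - \tfrac{\delta}{2}\gamma(V,W),
\]
contradicting $\fa(\tilde\s_{t_k}^k,\tilde\g_{t_k}^k) \to \m$. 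The main obstacle I expect is technical rather than conceptual: namely, verifying that the deformation preserves every clause of Definition \ref{def:sweepout}, in particular the compatibility between the open sets $\tilde\om_t^k$ and the deformed surfaces $(\tilde\s_t^k,\tilde\g_t^k)$ at singular times of the original family, which requires tracking how the finite singular sets $S_t^k,G_t^k$ and the common boundary $\db(\om_t^k)$ are transported by the $t$-dependent isotopy.
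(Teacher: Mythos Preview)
Your approach is the same pull-tight construction the paper uses, and the overall logic is correct. Two technical points need sharpening, and the paper addresses both explicitly.

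First, the map $t \mapsto X_{(V_t^k,W_t^k)}$ you build is only continuous in $t$, so the resulting isotopies $\psi_t^k$ depend only continuously on $t$; the deformed family $(\psi_t^k(\s_t^k),\psi_t^k(\g_t^k))$ then need not satisfy the smooth-convergence clause of Definition~\ref{def:paramfamily}, and may fail to lie in $\sw$. You flag this at the end as the ``main obstacle'', which is right, but the resolution is not a verification that continuity suffices: the paper instead approximates the continuous map $t \mapsto Y_k(t)$ by a smooth map $t \mapsto Z_k(t)$ in $C^1$, close enough that the energy-drop estimate survives with a factor $1/2$. Second, in your contradiction step you assert ``by continuity $\gamma(V_{t_k}^k,W_{t_k}^k) \ge \gamma(V,W)/2$'', but $(V,W)$ is the limit of the \emph{deformed} pairs, not of the original $(V_{t_k}^k,W_{t_k}^k)$. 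To close this you must build into the construction a displacement bound of the form $d\bigl((\tilde V,\tilde W),(V,W)\bigr) \le \tfrac14\, d\bigl((V,W),\mathcal K_\infty\bigr)$; then a non-stationary limit of the deformed sequence forces the original sequence to stay uniformly away from $\mathcal K_\infty$, and the energy drop follows. The paper records exactly this bound and uses it in the final step of its proof.
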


\begin{proof}
We follow the construction in \cite{colding2003min}.
Let us fix a minimizing sequence of sweepouts $\{\cswok\}^{k \in \N}$.
The proof is based on the construction of a deformation of the minimizing sequence; this deformation reduce the mass of the surfaces of a quantity that depends on how much the pair is ``far" from being stationary. This deformed minimizing sequence has the desired property.
We call
\begin{equation}
\am = \{(V,W) \in \cu \mid \nv(\M) \leq 2\m, \nw(\pM)\leq \hn(\pM) \}
\end{equation}
and
\begin{equation}
\ami
=
\{(V,W) \in \am \mid (V,W) \text{ is stationary for } \fa \}.
\end{equation}
Since the weak*-topology is metrizable on bounded sets, we choose a distance which induces it on $\{V \in \V_2(\M) \mid \nv(\M)\leq 2\m\}$ and $\{W \in \V_2(\pM) \mid \nw(\pM) \leq \hn(\pM)\}$ and we call $\D$ the distance on the product space $\am$ which induces the product topology. Up to scaling $\D$, we can assume that $\diam \am = 1$. Moreover $\am$ is compact since  it is a product of compact spaces. Since the property of being stationary is preserved under weak*-convergence of varifolds, it follows that $\ami$ is also a compact subset of $\am$.  We call $\B_r\pair$ the ball of radius $r$ and center $\pair$ with respect the distance $\D$.

For every $i \in \N$, we call
\begin{equation}
A_i
=
\Big\{
(V,W) \in \am \mid \frac{1}{2^{i+1}} \leq \D\big( (V,W), \ami \big) \leq \frac{1}{2^i}
\Big\}.
\end{equation}

Since every $A_i$ is closed, it is compact.
\begin{steps}[wide,%
labelindent=5pt]
\item
We claim that for every $i \in \N$ there exists a constant $c_i>0$ such that, for every $\pair \in A_i$, there exists $X _{\pair}\in \Xt$ such that $|X_{\pair}(x)|\leq 1$ for every $x \in \M$ and
\begin{equation}\label{eq:first_variation_negative_pull-tight}
\begin{split}
\dfa \pair [X_{\pair}]
= &
\int_{\gnm} \dive_S X_{\pair}(x) \dif V(x,S)
+ a \int_{\gnpm} \dive_{\pM} X_{\pair}(x) \dif W(x,T_x \pM)
\\
\leq &
- c_i.
\end{split}
\end{equation}
Indeed, suppose by contradiction that the claim is false for some $i \in \N$; thus there exists a sequence of pairs $(V_j,W_j)_j$ in  $A_i$ such that for every $j$ and for every $X \in \Xt$ such that $|X| \leq 1$ we have
\begin{equation}
\Big|
\int_{\gnm} \dive_S X(x) \dif V_j(x,S)
+ a \int_{\gnpm} \dive_{\pM} X(x) \dif W_j(x,T_x \pM)
\Big|
\leq\frac{1}{j}.
\end{equation}
By compactness of $A_i$, $(V_j,W_j)$ converges, up to subsequences, to $\pair \in A_i$. By the previous equation, it follows that $\pair$ is stationary for $\fa$, which contradicts $\pair \in A_i$. This proves the claim.

\item
The map $\pair \in \am \- \ami \mapsto X_{\pair} \in \Xt$ is not necessarily continuous. We want now to use it to construct a continuous one with respect the $C^1$-topology in $\Xt$ that satisfies an inequality on the first variation of $\fa$ similar to \eqref{eq:first_variation_negative_pull-tight} in a neighborhood of $\pair$.

We first observe that, by continuity of the first variation under varifold convergence, for every $i \in \N$ and every $\pair \in A_i$ there exists $r_{\pair} \in (0,\frac{1}{2^{i+2}})$ such that
\begin{equation}\label{eq:fv_remains_negative}
\int_{\gnm} \dive_S X_{\pair}(x) \dif\tv(x,S)
+ a \int_{\gnpm} \dive_{\pM} X_{\pair}(x) \dif \tw(x,T_x \pM)
\leq
- \frac{c_i}{2}
\end{equation}
whenever $\D \big(\pair, \tpair \big) \leq r_{\pair}$.
Thus the family
\begin{equation}
\F_i
=
\{
\B_{r_{\pair}} \pair \mid \pair \in A_i
\}
\end{equation}
(we recall that $\B_r \pair$ is the ball of center $\pair$ and radius $r$ with respect the distance $\D$)
is an open cover of $A_i$ and by compactness there exists a finite number $J_i$ of pairs $\{\pairij\}^{j=1,\dots,J_i}$ such that
\begin{equation}
A_i
\subset
\bigcup_{j = 1}^{J_i}
\B_{\frac{\rij}{2}}\pairij
\end{equation}
(where we call $\rij := r_{\pairij}$). For simplicity of notation, we call $\bij= \B_{\rij}\pairij$ and $\tbij=\B_{\frac{\rij}{2}}\pairij$.
We underline that by the choice of $r_{\pair}$ it follows that
\begin{equation}
A_h \cap \bij
= \emptyset
\qquad
\text{whenever } |i-h| \geq 2.
\end{equation}
By the assumptions, the family
\begin{equation}
\F
=
\{
\tbij \mid i \in \N, j=1, \dots, J_i
\}
\end{equation}
is a locally finite covering of $\am \- \ami$. We can construct a partition of the unity $\psiij$ with respect to this covering and define
\begin{equation}
\ypair
=
\D\big(\pair, \ami \big)
\sum_{\substack{i \in \N \\ j=1,\dots,J_i}} \psiij\pair X_{\pairij}
\qquad
\forall
\pair \in \am.
\end{equation}
The map $\pair \mapsto \ypair$ is continuous with respect the $C^1$ topology of $\Xt$ and $\ypair = 0$ if $\pair \in \ami$.
If we define for every $\pair \in \am$
\begin{equation}
\rho\pair
=
\min \Big\{
\frac{\rij}{2} \mid \pair \in \tbij
\Big\},
\end{equation}
by definition we have $\rho\pair \leq \frac{1}{2^{i+3}}$ whenever $\pair \in A_i$. Hence, if $\pair \in A_h$, for every $\tpair \in \B_{\rho\pair}\pair$ we have that
\begin{equation}
\begin{split}
\dfa \tpair (\ypair)
& =
\D\big(\tpair, \ami \big)
\sum_{\substack{|i-h| \leq 1 \\ j=1,\dots,J_i}} \psiij\tpair \dfa \tpair (X_{\pairij})
\\
& \leq
- \frac{1}{2^{h+2}} \min\{c_{h-1},c_h, c_{h+1}\},
\end{split}
\end{equation}
by \eqref{eq:fv_remains_negative}, since $\B_{\rho\pair}\pair \subset \bij$ for every $i,j$ such that $\pair \in \tbij$.

By compactness again, there exist two continuous functions $d \colon [0,1] \to (0,1]$ and $f \colon (0,1] \to (0,+\infty)$ such that
\begin{gather}
d(s) \leq 2^{-(i+3)} 	\label{eq:definition_distance_deformation_pulltight}
\qquad
\forall s \in [2^{-(i+1)},2^{-i}],
\\
\label{eq:deformation_pulltight_estimates_first_variation}
\dfa \tpair (\ypair)
\leq
- f \big( \D(\pair,\ami) \big)
\quad
\text{whenever}
\quad
\D\big( \pair,\tpair \big) < d\big( \D(\pair, \ami) \big).
\end{gather}
\item We now want to use the vector field-valued map constructed to deform the minimizing sequence of sweepouts $\cswok^{k \in \N}$ in a new minimizing sequence $\ctswok^{k \in \N}$ in $\sw$.

If $\pair \in \am$, we define $\pairt = \big( (\psi_{\pair , t})_\sharp V, (\psi_{\pair , t})_\sharp W \big)$, where $\psi_{\pair, t}$ is the flow map of $\ypair$ at the time $t$. Thus \eqref{eq:deformation_pulltight_estimates_first_variation} yields the existence of a maximum $t_0\pair>0$ such that, for any $t \in [0,t_0\pair]$ we have
\begin{equation}
\dfa \pairt (\ypair) \leq - f \big( \D(\pair,\ami) \big)
\quad
\text{and}
\quad
\D(\pairt,\pair) \leq d\big(\D(\pair,\ami) \big).
\end{equation}
Again by compactness of the sets $A_i$
there exists a continuous function $T \colon (0,1] \to (0,1]$ such that, for every $\pair \in \am$ and for every $t \in [0,T\big(\D(\pair,\ami) \big)]$, it holds
\begin{gather}
\dfa \pairt (\ypair) \leq - f \big( \D(\pair,\ami) \big),
\\
\D(\pairt,\pair)
\leq
d\big(\D(\pair, \ami) \big)
\stackrel{\eqref{eq:definition_distance_deformation_pulltight}}{<}
\frac{\D(\pair, \ami)}{4}.
\end{gather}
Hence, setting $\beta=\D(\pair,\ami)$
\begin{equation}
\fa (V_{T(\beta)},W_{T(\beta)}) - \fa \pair
\leq
\int_0^{T(\beta)} \dfa \pairt (\ypair) \dif t
\leq
- T(\beta) f \big( \D(\pair,\ami) \big).
\end{equation}
We can re-normalize the flows $\psi_{\pair, t}$ by defining $\Psi_{\pair, t}(x) = \psi_{\pair, T(\beta)t}$, where again $\beta= \D(\pair,\ami)$.  Thus, re-defining $\pairt = \big( (\Psi_{\pair , t})_\sharp V, \Psi_{\pair , t})_\sharp W \big)$ we have that there exists $g \colon (0,1] \to (0,+\infty)$ such that $\lim_{s \to 0}g(s)=0$ and
\begin{equation}
\fa (V_1,W_1) - \fa \pair
\leq
- g \big( \D(\pair,\ami) \big),
\qquad
\D\big( (V_1,W_1), \pair \big)
<
\frac{\D(\pair, \ami)}{4}.
\end{equation}
Now we choose a minimizing sequence of sweepouts $\{\cswok\}^{k \in \N}$ in $\sw$ such that
\begin{equation}
\ms \fa \cswok \leq \m(\sw) + \frac{1}{k}
\qquad
\forall k \in \N.
\end{equation}
Let us fix $k \in \N$. We would like to deform every $\cswok$ by the flow $\Psi_{\cswok, 1}$, but since the map
\begin{equation}
Y_k \colon t \in [0,1] \mapsto Y_{\cswok} \in \Xt
\end{equation}
is only continuous, the family of pairs
$t \mapsto (\Psi_{\cswok, 1})_\sharp \cswok$ may not belong to $\sw$.
Therefore we have to smooth the map $Y_k$. To this aim let us consider a smooth map $Z_k \colon [0,1] \to \Xt$. If $\sup_{t \in \I}\norm{Y_k - Z_k}_{C^1}$ is sufficiently small and if $Z_k(t)=0$ when $Y_k(t)=0$, then for every $t \in \I$ we have
\begin{equation}\label{eq:deforming_sweepout_pull-tight}
\fa \ctswok - \fa \cswok
\leq
- \frac{g\big( \D(\cswok,\ami) \big)}{2},
\qquad
\D\big( \ctswok, \cswok \big)
\leq
\frac{\D\big(\cswok, \ami\big)}{4}.
\end{equation}
where $\ctswok = \Phi_1(t)_\sharp \cswok$ and $\Phi_1(t)$ is the flow map of $Z_k(t)$ evaluated at the time $1$. Hence $\ctswok_{t \in I} \in \sw$.
By repeating the same procedure for every $k \in \N$ we obtain a new sequence of sweepouts $\ctswok^{k \in \N}$. By \eqref{eq:deforming_sweepout_pull-tight} we have that this sequence is again a minimizing sequence.
\item
We claim that $\ctswok^{k \in \N}$ has the desired property, that is, every min-max sequence converges to a stationary pair for $\fa$. To prove this, suppose by contradiction that the claim is false and let $(\ts{t_k}^k,\tg{t_k}^k)^{k \in \N}$ a min-max sequence such that
\begin{equation}
\limsup_{k \to \infty} \D((\ts{t_k}^k,\tg{t_k}^k), \ami) =\l >0.
\end{equation}
Up to passing to a subsequence we can assume that the above upper limit is in fact a limit. By the second relation in \eqref{eq:deforming_sweepout_pull-tight}, we have that, for $k$ sufficiently large,
\begin{equation}
\D\big( (\s_{t_k}^k, \g_{t_k}^k), \ami\big)
\ge
\frac{\l}{2}.
\end{equation}
By the first inequality in \eqref{eq:deforming_sweepout_pull-tight} we get
\begin{equation}
\fa (\s_{t_k}^k, \g_{t_k}^k)
\ge
\fa (\ts{t_k}^k,\tg{t_k}^k) +
\frac{1}{2} g\Big( \frac{\l}{2}\Big)
\xrightarrow{k \to \infty}
\m + \frac{1}{2} g\Big( \frac{\l}{2}\Big)
>
\m.
\end{equation}
Thus
\begin{equation}
\liminf_{k \to \infty} \ms \fa\cswok
\ge
\liminf_{k \to \infty} \fa (\s_{t_k}^k, \g_{t_k}^k)
\geq
\m + \frac{1}{2} g\Big( \frac{\l}{2}\Big)
\end{equation}
which contradicts the fact that the sequence $\cswok$ was minimizing in $\sw$.
\end{steps}
\end{proof}

In order to prove that the limit pair of every min-max sequence is non-trivial we need the following proposition.
\begin{proposition}\label{prop:lower_bound_min-max_value}
If $\sw$ is an homotopically closed set of sweepouts, then $a \hn(\pM) < \m(\sw)$.
\end{proposition}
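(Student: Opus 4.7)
The plan is to exhibit, for an arbitrary sweepout \(\cswo \in \sw\), an intermediate time \(t^{*}\) at which \(\fa \ct{t^{*}}\) exceeds \(a\,\hn(\pM)\) by a strictly positive amount independent of the sweepout. This will yield a uniform bound \(\m(\sw) \ge a\,\hn(\pM) + c_{0}\) with \(c_{0}=c_{0}(\M,a) > 0\), strictly larger than \(a\,\hn(\pM)\).

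First, conditions \ref{item:sw_internal} and \ref{item:sw_coincide} of Definition~\ref{def:sweepout} give (modulo \(\hn\)-null sets) the inclusions \(\biot \subset \st \cap \interior{\M}\) and \(\gt \subset \bbot \subset (\st \cap \pM) \cup \gt\). Decomposing \(\hn(\st) = \hn(\st \cap \interior{\M}) + \hn(\st \cap \pM)\) and using \(a < 1\), one obtains the pointwise comparison
\[
\fa \ct{t} \;\ge\; \hn(\biot) + a\,\hn(\bbot) + (1-a)\,\hn(\st \cap \pM)
\;\ge\; \hn(\biot) + a\,\hn(\bbot).
\]

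The key step is a capillary-isoperimetric estimate: there exist \(\delta_{0} = \delta_{0}(\M,a) > 0\) and \(c_{0} = c_{0}(\M,a) > 0\) such that for every \(\om' \subset \M\) with \(\vol(\om') = \delta_{0}\),
\[
\hn(\bi \om') - a\,\hn(\bb \om') \;\ge\; c_{0}.
\]
Two ingredients enter. On the one hand, the relative isoperimetric inequality in the convex body \(\M\) provides a constant \(c_{\mathrm{iso}} = c_{\mathrm{iso}}(\M) > 0\) with \(\hn(\bi \om') \ge c_{\mathrm{iso}}\,\vol(\om')^{2/3}\) whenever \(\vol(\om') \le \vol(\M)/2\). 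On the other, I fix once and for all a smooth vector field \(X\) on \(\M\) with \(|X| \le 1\) in \(\M\), \(X = N\) on \(\pM\) and \(\dive X \le C_{\M}\). A convenient choice is a smooth truncation of \(-\nabla \dist(\cdot,\pM)\); the concavity of the distance function in the convex body \(\M\) yields the one-sided divergence bound. The divergence theorem applied to \(\om'\) then gives
\[
\hn(\bb \om') \;=\; \int_{\om'} \dive X \,-\, \int_{\bi \om'} X \cdot \nu
\;\le\; C_{\M}\,\vol(\om') + \hn(\bi \om'),
\]
and combining both ingredients,
\[
\hn(\bi \om') - a\,\hn(\bb \om')
\;\ge\; (1-a)\,c_{\mathrm{iso}}\,\vol(\om')^{2/3} - a\,C_{\M}\,\vol(\om').
\]
Since \(a < 1\), the right hand side is positive and bounded below by a positive constant once \(\delta_{0}\) is chosen small enough.

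To conclude, property \ref{item:sw_vol_continuous} makes \(t \mapsto \vol(\ot{t})\) continuous with \(\vol(\ot{0}) = 0\) and \(\vol(\ot{1}) = \vol(\M)\), so the intermediate value theorem produces \(t^{*} \in [0,1]\) with \(\vol(\M \setminus \ot{t^{*}}) = \delta_{0}\). Setting \(\om' := \M \setminus \ot{t^{*}}\), the reduced interior interfaces coincide, \(\bi \ot{t^{*}} = \bi \om'\), while on \(\pM\) the two boundary traces partition \(\pM\) up to null sets, so \(\hn(\bb \ot{t^{*}}) + \hn(\bb \om') = \hn(\pM)\). Therefore
\[
\hn(\bi \ot{t^{*}}) + a\,\hn(\bb \ot{t^{*}})
\;=\; a\,\hn(\pM) + \bigl[\hn(\bi \om') - a\,\hn(\bb \om')\bigr]
\;\ge\; a\,\hn(\pM) + c_{0}.
\]
Together with the pointwise comparison of the first step this gives \(\fa \ct{t^{*}} \ge a\,\hn(\pM) + c_{0}\), and since \(c_{0}\) is independent of the sweepout, \(\m(\sw) \ge a\,\hn(\pM) + c_{0} > a\,\hn(\pM)\). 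The main obstacle is the capillary-isoperimetric estimate: while the relative isoperimetric inequality is standard in convex bodies, the trace bound requires a vector field with controlled divergence, and it is here that the convexity of \(\M\) plays its crucial role.
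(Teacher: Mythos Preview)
Your proof is correct and follows essentially the same route as the paper's: both combine the relative isoperimetric inequality with the divergence theorem applied to a vector field extending the outward normal $N$, and both arrive at the same lower bound of the form $(1-a)c_{1}\,|\M\setminus\ot{t}|^{2/3}-c_{2}\,|\M\setminus\ot{t}|$ at a time chosen via the intermediate value theorem. One small remark: your closing comment that convexity is ``crucial'' for the one-sided divergence bound overstates matters---any $C^{1}$ extension of $N$ to the compact set $\M$ has bounded divergence, and indeed the paper's version of this argument makes no use of convexity.
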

\begin{proof}
We want to show that there exists $\eta>0$  such that, for every sweepout $\cswo \in \sw$, it holds $\ms \fa \csw \geq a \hn(\pM) + \eta$.

To do so, we are going to prove that there exists $\e>0$ and a differentiable function $f \colon [0,\e) \to \R^+$ such that, for every sweepout $\csw$, $f(0) = 0$, $f'(s)>0$ for $s \in (0,\e)$ and
\begin{equation}
\fa \csw
\geq
a\hn(\pM) + f(|\M \- \om_t|)
\qquad
\forall t \in \I \text{ s.t. } |\M \- \om_t| \leq \e.
\end{equation}
This clearly implies the result. To prove the existence of such $f$, we write $\fa$ in a different way, and next we use the relative isoperimetric inequality to obtain a lower bound on $\fa \csw$.

%

We fix a vector field $X \in \Xm$ such that $|X(x)| \leq 1$ for all $x \in \M$ and $X(x)=N(x)$ (the exterior unit normal to $\M$) for all $x \in \pM$; the $C^1$-norm of $X$ depends only on $\M$. Using $X$ we can write $\fa \csw$ as
\begin{equation}\label{eq:rewriting_Fa}
\begin{split}
\fa \csw
= &
\hn(\st) + a \hn(\gt)
\\
= &
\hn(\st) + a \int_{\st \cup \gt} {X}\cdot{\nu_\s} \dif \hn
- a \int_{\st} {X}\cdot{\nu_\s} \dif \hn
\\
= &
\int_{\st} (1 - a {X}\cdot{\nu_\s}) \dif \hn
+ a \int_{\om_t} \dive X(x) \dif x.
\end{split}
\end{equation}
Since up to sets of $\hn$-measure $0$ it holds $\st \supset \de \om_t \cap \interior{\M}$, by the relative isoperimetric inequality there exists a constant $c(\M)>0$ such that
\begin{equation}
\hn(\st)
\geq
\hn(\de \om_t \cap \interior{\M})
\geq
c(\M) \min \{|\om_t|, |\M \- \om_t| \}^{\frac{2}{3}}.
\end{equation}
Thus, by \eqref{eq:rewriting_Fa} and $1-aX \cdot \nu_\s \geq 1-a$, for every $t \in \I$ we have
\begin{equation}
\begin{split}
\fa \csw
\geq &
c(\M)(1-a) \min \{|\om_t|, |\M \- \om_t| \}^{\frac{2}{3}}
+
a \int_{\M} \dive X(x) \dif x
- a \int_{\M \- \om_t} \dive X(x) \dif x
\\
= &
c(\M)(1-a) \min \{|\om_t|, |\M \- \om_t| \}^{\frac{2}{3}}
+ a \hn(\pM)
- a \int_{\M \- \om_t} \dive X(x) \dif x.
\end{split}
\end{equation}
If $\e >0$ is sufficently small, for all $t \in \I$ such that $|\M \- \om_t| \leq \e$, the previous inequality becomes
\begin{equation}
\begin{split}
\fa \csw
\geq &
a \hn(\pM) +
c_1 |\M \- \om_t|^{\frac{2}{3}} - c_2 |\M \- \om_t|
\\
= &
a \hn(\pM) + f(|\M \- \om_t|),
\end{split}
\end{equation}
where $c_1, c_2$ are positive constants depending on $\M, a$. Clearly  $f \in C^1\big((0,\e)\big)$, $f(0)=0$ and, up to choosing $\e > 0$ even smaller, we have and $f'(s) > 0$ for all $s \in (0,\e)$.
\end{proof}

\section{Existence of almost minimizing min-max sequences} \label{sec5}
We now introduce the notion of \emph{almost minimizing} pair $\pas$.

\begin{defi}\label{def:almost_minimizing}
Let $\e>0$, $U \subset \M$ be open and let $\pas$  be a pair  of surfaces that is compatible with an open set $\om \subset \M$ (in the sense of Definition \ref{def:compatible_pair_open}). We say that $\pas$ is \emph{$\e$-almost minimizing} in $U$ if there exists no parametrized pair of surfaces $\ft{t}_{t \in \I}$ and no family of open sets $\oswo$ such that:
\begin{enumerate}
\item $\ft{t}_{t \in \I}$ and $\oswo$
satisfy properties \ref{item:sw_contained}, \ref{item:sw_boundary}, \ref{item:sw_internal},  \ref{item:sw_coincide} and \ref{item:sw_vol_continuous} of Definition \ref{def:sweepout};
\item\label{item:am_contained_U} $\om_0= \om$ and $\om_t \- U = \om \- U$ for every $t \in \I$;
\item
$\Phi_0= \s$ and $\Phi_t \- U = \s \- U$ for every $t \in \I$;
\item
$\Psi_0 = \g$ and $\Psi_t \- U = \g \- U$ for every $t \in \I$;
\item $\fa \ft{t} \leq \fa \pas + \frac{\e}{8}$ for every $t \in \I$;
\item $\fa \ft{1} \leq \fa \pas - \e$.
\end{enumerate}
A sequence of pairs $\{\ck{k}\}^k$ is \emph{almost minimizing} in $U$ if there exists a sequence $\e_k \downarrow 0$ such that each $\ck{k}$ is $\e_k$-almost minimizing in $U$.
\end{defi}
Roughly speaking, a pair is almost minimizing in $U$ if it cannot be continuously deformed in $U$ into one other pair with $\fa$ smaller than $\e$ without passing through a pair with $\fa$ larger than $\e/8$.
The main result of this section is the following.
\begin{proposition}\label{prop:almost_minimizing_min-max_sequences}
Let $\sw$ a homotopically closed set of sweepouts; then there exist a min-max sequence $\ck{k}^{k \in \N} = \cktk^{k \in \N}$ and a function $r \colon \M \to (0,+\infty)$ that satisfy:
\begin{enumerate}
\item $\ck{k}$ converges, up to subsequences, to a pair $\pair \in \cu$ which is stationary for $\fa$;
\item There exists $\e_k \downarrow 0$ such that for every $x \in \M$ and for every annulus $\an \in \anul{r(x)}{x}$, there exists $N \in \N$ such that the sequence $\ck{k}^{k > N}$ is $\e_k$-almost minimizing in $\an$.
\end{enumerate}
%
\end{proposition}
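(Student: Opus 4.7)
The plan is to argue by contradiction, adapting the Almgren-Pitts combinatorial scheme as carried out in \cite[Section 5]{colding2003min}. Start from the minimizing sequence of sweepouts $\{\cswok\}^{k \in \N}$ produced by Theorem \ref{thm:pull-tight_procedure}, set $\e_k := 1/k$, and fix a function $r \colon \M \to (0,+\infty)$ bounded above by a small constant depending on $\M$ (for instance, by a small fraction of the reach of $\pM$, so that the monotonicity of Section \ref{subsec:monotonicity} applies on balls of radius $r(x)$). The goal is to select times $t_k \in \I$ such that $\fa(\ck{k}) \to \m$, so that the convergence to a stationary pair provided by Theorem \ref{thm:pull-tight_procedure} is inherited by the subsequence, and such that $\ck{k}$ is $\e_k$-almost minimizing in every annulus in $\anul{r(x)}{x}$ at every $x \in \M$.

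Suppose by contradiction that no such selection is possible for infinitely many $k$. Consider the open set $K_k := \{t \in \I : \fa(\cswok_t) > \m - \e_k/100\}$, which is non-empty for $k$ large by definition of $\m$. The contradiction hypothesis forces that, for every $t \in K_k$, there exist a point $x(t) \in \M$ and an annulus $\an(t) \in \anul{r(x(t))}{x(t)}$ in which $\ctk{t}{k}$ fails to be $\e_k$-almost minimizing. By Definition \ref{def:almost_minimizing}, this failure produces an open-set-compatible competitor family $\{(\Phi^t_s, \Psi^t_s)\}_{s \in \I}$ supported in $\an(t)$, reducing $\fa$ by at least $\e_k$ at $s=1$ while never increasing it by more than $\e_k/8$ at intermediate values of $s$.

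Next, discretize $K_k$ into finitely many intervals $I_j$ on which $t \mapsto \fa(\cswok_t)$ oscillates by at most $\e_k/16$, pick a representative $t_j \in I_j$, and invoke the standard Almgren-Pitts many-annuli combinatorial lemma: at each point of $\M$ one may choose, from any sufficiently long geometric sequence of concentric annuli, one such that the annuli selected around any finite collection of points are pairwise disjoint. Refining the partition as needed, we may thus arrange the bad annuli $\an(t_j)$ to be pairwise disjoint, allowing the corresponding $s$-homotopies to be combined into a single homotopy of the entire sweepout $\cswok$ by a smooth cut-off in the $t$-variable that interpolates between the improved slice and the unchanged parts of $\cswok$ off $K_k$. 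By homotopy-closedness of $\sw$, the resulting family $\ctswok$ lies in $\sw$ and satisfies $\max_{t \in \I} \fa(\ctk{t}{k}) \le \m - c\,\e_k$ for some $c > 0$ independent of $k$, contradicting $\m = \m(\sw)$.

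The main obstacle is ensuring the combined construction satisfies all the sweepout axioms of Definition \ref{def:sweepout}, namely the continuity $t \mapsto \vol(\om^k_t)$, the compatibility of $\ctk{t}{k}$ with its associated open set $\om^k_t$, and the preservation of the finite singular set structure. As in \cite[Section 5]{colding2003min}, this is handled via a careful discretization-and-interpolation scheme, exploiting the key fact that the competitors furnished by Definition \ref{def:almost_minimizing} already carry a compatible family of open sets by construction, so compatibility propagates through the gluing.
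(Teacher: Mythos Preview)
Your argument has a genuine gap at the disjointness step. The contradiction hypothesis gives, for each $t \in K_k$, a \emph{single} bad annulus $\an(t)$ in which $\ctk{t}{k}$ fails to be $\e_k$-almost minimizing. To glue the competing deformations across adjacent intervals $I_j$, $I_{j+1}$ of the discretization you need $\an(t_j) \cap \an(t_{j+1}) = \emptyset$, but nothing forces this: the bad annuli for nearby times may well be centered at nearby points with overlapping radii. The ``many-annuli combinatorial lemma'' you invoke applies only when, at each step, one may \emph{choose} an annulus from a long concentric family; here the bad annulus is dictated by the failure of almost-minimality, not chosen by you. Relatedly, fixing $r(x)$ a priori cannot work in general: there may be a single point $x_0$ at which the slices fail to be almost minimizing in arbitrarily small balls for infinitely many $k$, and then no uniform small $r$ suffices.

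The paper resolves both issues by inserting an intermediate statement. First (Lemma \ref{lmm:A-P_comb_lemma}) one proves that some min-max sequence is $\frac{1}{k}$-almost minimizing in at least \emph{one} set of every admissible \emph{pair} $(U_1,U_2) \in \co$, i.e.\ pairs with $\dist(U_1,U_2) \ge 4\min\{\diam U_1,\diam U_2\}$. In the contradiction argument this gives \emph{two} bad sets per time, and an elementary lemma (Lemma \ref{lmm:admisible_pairs_disjoints}) guarantees that among any two such pairs one can always select disjoint representatives, which is exactly what the gluing needs. Second, the function $r$ is not fixed in advance but constructed \emph{after} this pairwise result via a dichotomy: either the sequence is eventually almost minimizing in some small ball around every $x$ (and one takes $r(x)$ to be that radius), or there is a bad point $x_0$ with a subsequence failing in $B_{1/j}(x_0)$, whence the pair property with $(B_{1/j}(x_0), \M \setminus B_{9/j}(x_0))$ forces almost-minimality in the complement, and one sets $r(x_0) = \diam \M$ and $r(y) = \dist(y,x_0)$ elsewhere. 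Your sketch collapses these two steps into one and in doing so loses the mechanism that produces disjoint deformation supports.
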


The strategy of the proof is as follows: we have an homotopically closed set of sweepouts and a minimizing sequence $\{\cswok\}^{k \in \N}$ such that any min-max sequence converges to a stationary varifold, which is given by Theorem \ref{thm:pull-tight_procedure}. We prove that, if there were no almost minimizing min-max sequence, then the minimizing sequence can be deformed in another one $\ctswok^{k \in \N}$ such that $\lim_k \ms \fa \ctswok < \m$ and this would be a contradiction.

\subsection{Deforming a sweepout near a non-almost minimizing slice}

If a slice $\ct{t_0}$ of a sweepout is not $\varepsilon$-almost minimizing in some open set $U$, it can be deformed in another pair $\tt{}$, with $\fa \tt{} \leq \fa \ct{t_0} - \e$ by a parametrized family of pairs $\ft{t}_{t \in \I}$ such that $\ft{0}=\ct{t_0}$ and $\ft{1}=\tt{}$.

We cannot simply replace $\ct{t_0}$ with $\tt{}$, first of all because in this way we no longer have a smooth parametrized family of pairs and because we need to deform also the slices for times near $t_0$ to build the contradiction argument stated above. Hence we need the following lemma, which is similar to \cite[Lemma 3.1]{delellis2009existence} and \cite[Lemma 5.1]{delellis2017minmax}.

\begin{lemma}\label{lmm:freezing_procedure}
Let $\cswo$ be a sweepout and suppose $t_0 \in (0,1)$ is such that $\ct{t_0}$ is not $\e$-almost minimizing in $U \subset \M$ for some $\e>0$ and some open set $U$. Then for every open set $V \subset \M$ with $U \cc V$, there exists $\bar{\eta}>0$ such that, whenever we choose $\eta \leq \bar{\eta}$ and $0\leq t_0-\eta < a < a''< b'' < b < t_0+\eta \leq 1$, there exists a sweepout $\ctswo$ that is homotopic to $\cswo$ and satisfies the following properties:
\begin{enumerate}
\item $\tt{t}=\ct{t}$ for every $t \in [0,a] \cup [b,1]$;
\item $\tt{t}  \- V = \csw \- V$ for every $t \in \I$;
\item $\fa \ctsw < \fa \csw + \frac{\e}{4}$ for every $t \in \I$;
\item $\fa \ctsw < \fa \csw - \frac{\e}{2}$ for every $t \in [a'',b'']$.
\end{enumerate}
\end{lemma}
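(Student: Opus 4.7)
The plan is to unpack the failure of the almost-minimizing property at $t_0$ to extract a competitor deformation, exploit the continuity of the sweepout to transfer this deformation to nearby slices, and glue everything together via a smooth cut-off in the time parameter.

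First, since $\ct{t_0}$ is not $\e$-almost minimizing in $U$, the negation of Definition~\ref{def:almost_minimizing} produces a one-parameter family $\ft{s}_{s\in\I}$ together with open sets $\{\hat\om_s\}_{s\in\I}$ such that $\ft{0}=\ct{t_0}$, the family is stationary outside $U$ (in particular outside $V$), $\fa\ft{s}\le \fa\ct{t_0}+\frac{\e}{8}$ for every $s$, and $\fa\ft{1}\le \fa\ct{t_0}-\e$. Next, I use the continuity properties of the sweepout---property~(6) of Definition~\ref{def:sweepout}, property~(3)--(5) of Definition~\ref{def:paramfamily}, and Lemma~\ref{lmm:convergence_area_any_open}---to pick $\bar\eta>0$ so small that for every $t$ with $|t-t_0|<\bar\eta$ one has $|\fa\ct{t}-\fa\ct{t_0}|<\e/100$, the symmetric difference $|\om_t\triangle\om_{t_0}|$ is as small as desired, and $\ct{t}$ is $C^\infty$-close to $\ct{t_0}$ on any open set compactly contained in $\M\setminus S_{t_0}$. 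Without loss of generality I enlarge $V$ slightly so that $\clos V\setminus U$ lies in $\M\setminus S_{t_0}$; the original hypothesis $U\cc V$ still leaves room for this adjustment.

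Fix a smooth function $\chi\colon\I\to\I$ that vanishes on $[0,a]\cup[b,1]$, equals $1$ on $[a'',b'']$, and is monotone on $[a,a'']\cup[b'',b]$. By the smooth closeness in $\clos V\setminus U$ obtained above, for each $t\in(t_0-\bar\eta,t_0+\bar\eta)$ there exists an isotopy $\varphi_t\colon\M\to\M$, supported in a neighborhood of $\clos V\setminus U$ inside $V$, with $\varphi_t(\ct{t_0})=\ct{t}$ on a neighborhood of $\partial V$, and with $\|\varphi_t-\id\|_{C^1}\to 0$ as $t\to t_0$. Define $\tt{t}:=\ct{t}$ for $t\in[0,a]\cup[b,1]$, and for $t\in[a,b]$ set $\tt{t}$ to be the pair obtained by taking $\varphi_t\ft{\chi(t)}$ inside $V$ and $\ct{t}$ outside $V$, with the corresponding $\tom_t$ defined analogously from $\hat\om_{\chi(t)}$ and $\om_t$. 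Conditions~(1) and~(2) of the lemma are built into the construction. For~(3) one splits
\[
\fa\ctsw \;\le\; \fa\bigl(\varphi_t\ft{\chi(t)}\bigr)\restr{V} + \fa\ct{t}\restr{\M\setminus V},
\]
where the first summand equals $\fa\ft{\chi(t)}\restr{V}$ up to an error of order $\|\varphi_t-\id\|_{C^1}$; then $\fa\ft{\chi(t)}\le \fa\ct{t_0}+\e/8$ together with $|\fa\ct{t}-\fa\ct{t_0}|<\e/100$ gives $\fa\ctsw<\fa\csw+\e/4$ once $\bar\eta$ is small enough. For~(4), on $[a'',b'']$ the first summand is controlled by $\fa\ft{1}\le\fa\ct{t_0}-\e$, which combined with the same closeness estimate yields $\fa\ctsw<\fa\csw-\e/2$. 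Finally, the homotopy between $\cswo$ and $\ctswo$ is produced by replacing $\chi$ with $\lambda\chi$ for $\lambda\in[0,1]$ and the isotopy $\varphi_t$ with its straight-line interpolation to the identity.

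The main obstacle is the construction of the cut-off isotopy $\varphi_t$ supported in $V$. One needs it to match $\ct{t_0}$ on $\partial U$ (where $\ft{s}$ equals $\ct{t_0}$) and to equal $\ct{t}$ on $\partial V$ (where no modification is allowed), while producing an area error that is $o(1)$ as $t\to t_0$ uniformly for $s=\chi(t)\in\I$. This is where the enlargement of $V$ away from $S_{t_0}$ is essential: smooth convergence $\ct{t}\to\ct{t_0}$ on the compact collar $\clos V\setminus U$ allows writing $\ct{t}$ as the normal graph of a function $u_t$ over $\ct{t_0}$ with $\|u_t\|_{C^1}\to 0$, and an explicit cut-off of $u_t$ in a neighborhood of $\partial V$ produces the desired $\varphi_t$. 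This is the two-dimensional (both interior and boundary component) analogue of the interpolation used in \cite[Lemma 3.1]{delellis2009existence} and \cite[Lemma 5.1]{delellis2017minmax}, and it is the only place in the argument where the area growth $\e/4$ versus the drop $\e/2$ requires a careful accounting of the cumulative error.
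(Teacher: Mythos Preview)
Your overall plan---extract the competitor $\ft{s}$, glue it inside $V$ against the moving slice $\ct{t}$ via a collar interpolation, and modulate by a time cut-off $\chi$---matches the paper's architecture. However, the construction as you wrote it has a genuine continuity gap at the endpoints $t=a$ and $t=b$.

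You define $\tt{t}=\ct{t}$ for $t\le a$ and, for $t\in[a,b]$, set $\tt{t}=\varphi_t\ft{\chi(t)}$ inside $V$. At $t=a$ we have $\chi(a)=0$, hence $\ft{\chi(a)}=\ft{0}=\ct{t_0}$. Since your isotopy $\varphi_t$ is supported in a neighborhood of $\clos V\setminus U$, it is the identity in the interior of $U$; thus $\tt{a^+}\cap U=\varphi_a\ct{t_0}\cap U=\ct{t_0}\cap U$, which in general differs from $\ct{a}\cap U$. The family is therefore discontinuous at $t=a$ (and symmetrically at $t=b$), so it is not a sweepout and property~(1) fails. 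You cannot repair this by making $\varphi_t$ act on all of $V$, since $\ct{t}$ may carry singular points of $S_t$ inside $U$ and need not be globally ambient-isotopic to $\ct{t_0}$ there.

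The missing ingredient is a preliminary ``freezing'' of the slice inside $U$ before the competitor is switched on. The paper introduces intermediate times $a<a'<a''<b''<b'<b$ together with a monotone function $\gamma$ with $\gamma(t)=t$ outside $[a,b]$ and $\gamma(t)=t_0$ on $[a',b']$: on $[a,a']$ the interior piece is $\ct{\gamma(t)}$, so it moves continuously from $\ct{a}$ to $\ct{t_0}$; only afterwards (on $[a',b']$) is the competitor $\ft{\omega(t)}$ applied. The collar interpolation then merely reconciles $\ct{t}$ outside with $\ct{\gamma(t)}$ inside, and both coincide at $t=a$. Your normal-graph interpolation in the collar is fine, but it must be combined with this freezing step; a single time cut-off $\chi$ cannot simultaneously turn off the competitor \emph{and} restore the original slice inside $U$.
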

\begin{proof}
The number $\bar{\eta}$ will be chosen at the end of the proof.
\begin{steps}[wide,%
labelindent=5pt]
\item
The first step in the proof is to build the new sweepout $\tt{t}$; this is achieved first of all by ``deforming the time inside $U$", to have that the slice is frozen on $\ct{t_0}$ in an open neighborhood of $t_0$, whereas in $\M \- V$ nothing changes. While $\ct{t_0}$ remains frozen in $U$, we deform it therein by the family of $\ft{t}$ given by Definition \ref{def:almost_minimizing} which contradicts the $\e$-almost minimality of $\ct{t_0}$. Of course, this requires some technicalities.

Let $\{\om_{t}\}_{t \in \I}$ be the family of open sets relative to $\csw$.
Let us fix $V$ and choose two open sets $A,B$ such that
\begin{equation}
U \cc A \cc B \cc V
\end{equation}
and such that both $\s_{t_0}$ and $\g_{t_0}$ are smooth surfaces (with eventually non-empty smooth boundary) in $C := B \- A$ (this is possibile since the sets of singularities $S_{t_0}, G_{t_0}$ of $\s_{t_0}$ and $\g_{t_0}$ are finite).
Let us fix two non-negative smooth functions $\chi_A, \chi_B$ such that
\begin{itemize}
\item $\chi_A \in C_c^{\infty}(B)$, $\chi_B \in C_c^{\infty}(\M \- \clos{A})$;
\item $\chi_A(x) + \chi_B(x) \equiv 1$ for all $x \in \M$.
\end{itemize}
Since $\pM$ is $\breg$, there exists $r>0$ such that the distance function $\dist(x, \pM)$ is $\breg$ if $\dist(x, \pM) \leq 2r$. Let $d(\cdot)$ a smooth approximation of $\dist(\cdot, \pM)$ that coincides with $\dist(\cdot,\pM)$ whenever $\dist(x,\pM) \leq r$. By the regularity of $\pM$ and $d$, there exists a smooth function $F \colon x \in \M \mapsto \varphi_x \in \breg(B_r,B_r)$ such that each $\phi_x$ is a diffeomorphism and
\begin{equation}
\varphi_x\big(B_r \cap \dil{x,1}(\M) \big)  = B_r \cap \{x_1 \geq - d(x)\}
\qquad
\forall x \in \M,
\end{equation}
where $\dil{x,\lambda}$ is the dilation map defined in \eqref{eq:dilation_map}. Essentially, $\phi_x$ is a local chart that depends smoothly on $x$ and maps $\pM$ in $\{x_1 = -d(x)\}$.

By a suitable choice of $A, B, \bar{\eta}$ and by smooth convergence of $\s_t \to \s_{t_0}$ in $C$, for every $t \in (t_0 - \bar{\eta}, t_0 + \bar{\eta})$ $\s_t \cap C$ can be written as $\psi_t(\s_{t_0})$ for a smooth 1-parameter family of diffeomorphisms such that $\psi_{t_0}=\Id \restr{\s_{t_0}}$.
Moreover, if $\bar{\eta}$ is sufficiently small, then $\psi_t(x) \in B_r(x)$ for all $t \in (t_0-\bar{\eta}, t_0+\bar{\eta})$ and for all $x \in \M$.

All the previous arguments show that, for all $s,t \in (t_0-\bar{\eta}, t_0+\bar{\eta})$ and all $\lambda \in \I$, it is well defined
\begin{equation}
f_{t,s,\lambda}(x)
=
\phi_x^{-1} \Big(
\chi_B(x) \phi_x(\psi_t(x))
+
\chi_A (x) \big[ (1-\lambda)\phi_x( \psi_t(x))
+ \lambda \phi_x ( \psi_s(x)) \big]
\Big)
\quad
\forall x \in \M.
\end{equation}
In other words, $f_{t,s,\lambda}(\s_{t_0} \cap C)$ coincides with $\s_t$ in a neighborhood of $B$ and changes smoothly in a convex combination of $\s_{t}$ and $\s_{s}$ in a neighborhood $A$.
This definition ensures that $\de [f_{t,s,\lambda}(\s_{t_0} \cap C) ]\subset \pM$ for all $\s,t \in (t_0-\bar{\eta}, t_0+\bar{\eta})$ and all $\lambda \in \I$. Moreover, since $f_{t,s,\lambda}(\s_{t_0})$ is essentially a convex combination of $\s_t$ and $\s_s$ and by smooth convergence of the $\s_t$ in $C$, $\bar{\eta}$ can be chosen so that every $f_{t,s,\lambda}(\s_{t_0} \cap C)$ is diffeomorphic to $\s_{t_0} \cap C$ and
\begin{equation}\label{eq:estimate_convex_combination_a.m.}
\hn(f_{t,s,\lambda}(\s_{t_0} \cap C))
<
\hn(\s_t \cap C) + \frac{\e}{32}
\qquad
\forall t,s, \in (t_0-\bar{\eta}, t_0+\bar{\eta}),
\,
\forall \lambda \in \I.
\end{equation}
In a similar way we can define $g_{t,s,\lambda}$ which satisfies the same properties of $f_{t,s,\lambda}$ with $\g_t$ in place of $\s_t$, in particular
\begin{equation}\label{eq:estimate_boundary_convex_combination_a.m.}
\hn(g_{t,s,\lambda}(\g_{t_0} \cap C)) < \hn(\g_t \cap C) + \frac{\e}{32}
\qquad
\forall t,s, \in (t_0-\bar{\eta}, t_0+\bar{\eta}),
\,
\forall \lambda \in \I.
\end{equation}
Moreover we have that $g_{t,s,\lambda}(\g_{t_0} \cap C) \subset \pM$ for all $s,t \in (t_0-\bar{\eta}, t_0+\bar{\eta})$ and all $\lambda \in \I$. We underline that by the construction of $f_{t,s,\lambda}(\s_{t_0} \cap C)$ and $g_{t,s,\lambda}(\g_{t_0} \cap C)$, it follows that their boundaries coincide and are contained in $\pM$.
Again in a similar way, we can define the family of open sets $\Xi_{t,s,\lambda}$ relative to $f_{t,s,\lambda}(\s_{t_0})$ and $g_{t,s,\lambda}(\s_{t_0})$.

We now fix $a,a',a'',b'',b,b' \in (0,1)$ such that $t_0 - \bar{\eta} < a < a' < a'' < b'' < b' < b < t_0 + \bar{\eta}$ and two smooth functions $\beta, \gamma \colon \I \to \I$ such that
\begin{itemize}
\item $\beta(t)= 0$ for every $t \in [0,a] \cup [b,1]$ and $\beta(t)\equiv 1$ for every $t \in [a',b']$;
\item $\gamma(t) = t$ for every $t \in [0,a] \cup [b,1]$, $\gamma(t)= t_0$ for every $t \in [a',b']$ and $\gamma$ is monotone non-decreasing.
\end{itemize}
We now use the hypotheses that $\ct{t_0}$ is not $\e$-almost minimizing in $U$, that is there exists a parametrized pair of surfaces $\ft{t}_{t \in \I}$ which satisfies the properties of Definition \ref{def:almost_minimizing}.
Let us consider another smooth function $\omega \colon \I \to \I$ such that $\omega(t)=0$ for every $t \in [0,a']\cup [b',1]$ and $\omega(t)=1$ for every $t \in [a'',b'']$. We now define the new sweepout $\tt{t}_{t \in \I}$ as follows:
\begin{itemize}
\item $\tt{t}=\ct{t}$ for every $t \in [0,a] \cup [b,1]$;
\item $\tt{t} \- B =\ct{t} \- B$ for every $t \in \I$;
\item $\tt{t} \cap C = (f_{t,\gamma(t),\beta(t)}(\s_{t_0} \cap C),g_{t,\gamma(t),\beta(t)}(\g_{t_0} \cap C))$ for every $t \in (a,b)$;
\item $\tt{t} \cap A = \ct{\gamma(t)}$ for every $t \in [a,a'] \cup [b',b]$;
\item $\tt{t} \cap A = \ft{\omega(t)}$ for every $t \in (a',b')$.
\end{itemize}
In a similar way are defined the open sets $\tilde{\om}_t$ relative to $\tt{t}$.
By definition it is clear that $\tt{t}_{t \in \I}$ is homotopic to $\ct{t}_{t \in \I}$ and satisfies the first two conditions of the lemma.
\item The next step is to estimate $\fa \tt{t}$ to complete the proof of the lemma. Since $\tt{t}= \ct{t}$ for every $t \in [0,a] \cup [b,1]$, we have to estimate $\fa \tt{t}$ only for $t \in (a,b)$.
\begin{itemize}
\item Let us consider $t \in (a,a'] \cup [b',b)$; We have that $\tt{t} \- B = \ct{t} \- B$. In $C$ we have
\begin{gather}
\hn(\ts{t} \cap C)
=
\hn(f_{t,\gamma(t),\beta(t)}(\s_{t_0} \cap C))
<
\hn(\s_t \cap C) + \frac{\e}{32},
\\
\hn(\tg{t} \cap C)
=
\hn(g_{t,\gamma(t),\beta(t)}(\g_{t_0} \cap C))
<
\hn(\g_t \cap C) + \frac{\e}{32},
\end{gather}
where the inequalities follow by \eqref{eq:estimate_convex_combination_a.m.} and \eqref{eq:estimate_boundary_convex_combination_a.m.} and by $\gamma(t) \in (a,b)$ for $t \in (a,b)$. In $A$ we have
\begin{gather}
|\hn(\ts{t} \cap A) - \hn(\s_{t} \cap A)|
=
|\hn(\s_{\gamma(t)} \cap A) - \hn(\s_{t} \cap A)|
<
\frac{\e}{32}
\\
|\hn(\tg{t} \cap A) - \hn(\g_{t} \cap A)|
=
|\hn(\g_{\gamma(t)} \cap A) - \hn(\g_{t} \cap A)|
<
\frac{\e}{32}
\end{gather}
by Lemma \ref{lmm:convergence_area_any_open} if $\bar{\eta}$ is chosen sufficiently small. Gathering these estimates we get
\begin{equation}
\fa \tt{t}
<
\fa \ct{t} + \frac{\e}{8}
\qquad
\forall t \in (a,a'] \cup [b',b).
\end{equation}
\item
Now we consider $t \in (a',a'') \cup (b'',b')$. The estimates in $\M \- B$ and in $C$ are the same of the case $t \in (a,a'] \cup [b',b)$, the only difference being in $A$. Thus we have to estimate only in $A$. Therein we have
\begin{equation}
\begin{split}
\hn(\ts{t} \cap A)
+
a \hn(\tg{t} \cap A)
& =
\hn(\Phi_{\omega(t)} \cap A)
+
a \hn(\Psi_{\omega(t)} \cap A)
\\
& <
\hn(\s_{t_0} \cap A) +  a \hn(\g_{t_0} \cap A) + \frac{\e}{8}
\\
& <
\hn(\s_{t} \cap A) + a\hn(\s_{t} \cap A) + \frac{\e}{8} + \frac{\e}{16}
\end{split}
\end{equation}
where the first inequality follows by the fact that $\ft{s}$ contradicts the $\e$-almost minimality of $\ct{t_0}$ in $U$ (recall that $\ft{s} \- U = \ct{t_0} \- U$), whereas the second inequality is true by Lemma \ref{lmm:convergence_area_any_open} if we choose $\bar{\eta}$ sufficiently small.

By this and the estimate in $C$ we have
\begin{equation}
\fa \tt{t}
<
\fa \ct{t} + \frac{\e}{4}
\qquad
\forall t \in (a',a'') \cup (b'',b').
\end{equation}
\item
Now consider the case $t \in [a'',b'']$. Again, the estimates in $\M \- B$ and in $C$ as the same as the other cases. We racall that, by definition, $\ts{t} \cap A= \Phi_1 \cap A$ and $\tg{t} \cap A = \Psi_1 \cap A$. Thus
\begin{equation}
\begin{split}
\hn(\ts{t} \cap A) + a\hn(\tg{t} \cap A)
& =
\hn(\Phi_{1} \cap A) + a \hn(\Psi_1 \cap A)
\\
& <
\hn(\s_{t_0} \cap A) + a \hn(\g_{t_0} \cap A)- \frac{\e}{2}
\\
& <
\hn(\s_t \cap A) + a \hn(\g_{t} \cap A) - \frac{3 \e}{8}.
\end{split}
\end{equation}
where the first inequality follows by the definition of $\ft{t}$ and the second is true if we choose $\bar{\eta}$ sufficiently small. Gathering this estimate with the one in $C$, we get
\begin{equation}
\fa \tt{t}
<
\fa\ct{t} - \frac{\e}{4}
\qquad
\forall t \in [a'',b''].
\end{equation}
\end{itemize}
If we choose $\bar{\eta}>0$ so small so that all the previous estimate are valid, $\tt{t}$ satisfies all the conclusion of the lemma. It is clear that all the previous arguments remain true whenever $0< \eta \leq \bar{\eta}$.
\end{steps}
\end{proof}

\subsection{Almgren-Pitts combinatorial argument}
The aim of this section is to show that there exists a min-max sequence of pairs which is almost minimizing in at least one of any \emph{admissible pair} of open sets, that is a pair of open sets which are at a sufficiently large distance compared to their diameter. This is used in \ref{subsec:proof_almost_minimizing_min-max} to complete the proof of Proposition \ref{prop:almost_minimizing_min-max_sequences}.

\begin{defi}[Admissible pair of sets]
If $U_1, U_2 \subset \M$ are relatively open and non-empty, we say that $(U_1, U_2)$ is an \emph{admissible pair of open sets} if
\begin{equation}
\dist(U_1,U_2) \geq 4 \min\{\diam U_1,\diam U_2 \}.
\end{equation}
We call $\co$ the family of admissible pairs of open sets.
\end{defi}

\begin{defi}
For any $\e>0$ and $(U_1,U_2) \in \co$, we say that $\ct{}$ is $\varepsilon$-almost minimizing in $(U_1,U_2)$ if it is $\e$-a.m. in at least one of $U_1, U_2$.
\end{defi}

We can now state the fundamental result of this subsection.

\begin{lemma}[Almgren-Pitts combinatorial lemma] \label{lmm:A-P_comb_lemma}
Let $\sw$ be a homotopically closed family of sweepouts; there exists a minimizing sequence of sweepouts $\ctk{t}{k}_{t \in \I}^{k \in \N}$ and a min-max sequence $\ck{k}^{k \in \N}:= \ctk{t_k}{k}^{k \in \N}$ that satisfies, for any $k \in \N$, the following properties:
\begin{enumerate}
\item $\fa \ck{k} \geq \m - \frac{1}{k}$;
\item $\fa \ck{k}$ is $\frac{1}{k}$-a.m. in every $(U_1,U_2) \in \co$;
\item $\ck{k}$ converges to a pair in $\cu$ which is stationary for $\fa$.
\end{enumerate}
\end{lemma}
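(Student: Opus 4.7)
The proof is a contradiction argument in the spirit of the classical Almgren--Pitts combinatorial lemma (as reformulated by Colding--De Lellis), using as inputs the minimizing sequence $\{\cswok\}^{k \in \N}$ supplied by the pull-tight Theorem~\ref{thm:pull-tight_procedure} and the freezing Lemma~\ref{lmm:freezing_procedure}. My plan is first to produce, for each fixed $k$, a competitor sweepout whose near-maximum slices are $(1/k)$-almost minimizing in every admissible pair, and then to re-apply the pull-tight procedure to recover property (3).

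The contradiction hypothesis reads: there exists $k_0 \in \N$ (set $\varepsilon := 1/k_0$) such that every sweepout $\ctswo \in \sw$ homotopic to $(\s_t^{k_0},\g_t^{k_0})$ with $\ms \fa(\tilde\s_t,\tilde\g_t) \leq \m + \varepsilon/16$ has the property that no slice at a time $t^*$ with $\fa(\tilde\s_{t^*},\tilde\g_{t^*}) \geq \m - \varepsilon/16$ is $\varepsilon$-almost minimizing in every admissible pair. At every such ``bad'' time $t^*$ there is then an admissible pair $(U_1(t^*),U_2(t^*)) \in \co$ in which the slice fails, and by the definition of almost minimality for a pair this means it fails in both $U_1(t^*)$ and $U_2(t^*)$. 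Exploiting the admissibility inequality $\dist(U_1,U_2) \geq 4\min\{\diam U_i\}$ one can enclose each set in a slightly larger open neighborhood $V_i$ with $V_1 \cap V_2 = \emptyset$. Applying Lemma~\ref{lmm:freezing_procedure} independently in $(U_1(t^*),V_1)$ and in $(U_2(t^*),V_2)$ on a common time window $(t^*-\eta,t^*+\eta)$ produces two homotopic modifications, each supported in its own disjoint spatial region; because $\fa$ is additive under modifications in disjoint regions, the two deformations compose to a single sweepout with $\fa$ increased by at most $\varepsilon/2$ globally and decreased by at least $\varepsilon$ on a common sub-interval $[a'',b''] \subset (t^*-\eta,t^*+\eta)$.

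Globalising this local reduction is the combinatorial heart of the argument. The near-maximum set $\{t \in \I : \fa(\tilde\s_t,\tilde\g_t) \geq \m - \varepsilon/16\}$ is compact, so finitely many windows $\{I_j\}_{j=1}^J$ centred at bad times $t_j$, with associated admissible pairs $(U_1^j,U_2^j)$ and spatial neighborhoods $V_1^j,V_2^j$, cover it. One chooses the radii $\bar\eta_j$ and the spatial neighborhoods small enough so that the freezings can be iterated in a prescribed order without the cumulative $\varepsilon/4$ increments destroying the local $\varepsilon/2$ decrements; this is exactly the Pitts combinatorial ordering of \cite{pitts2014existence} carried out in \cite{colding2003min}, and it applies verbatim because $\fa$ splits additively under spatially disjoint modifications. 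The outcome is a sweepout homotopic to the starting one with $\ms \fa < \m$, contradicting the definition of $\m(\sw)$, and thereby producing the desired sequence $\{\ctswok\}^{k \in \N}$ satisfying (1) and (2).

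To recover (3) I would re-run the pull-tight procedure of Theorem~\ref{thm:pull-tight_procedure} on $\{\ctswok\}^{k \in \N}$: the pull-tight deformations can be taken $C^1$-small on each slice, and hence preserve the $(1/k)$-almost minimizing property (up to passing from $1/k$ to, say, $1/(2k)$), while guaranteeing that every min-max sequence converges to a stationary pair. The main obstacle is the combinatorial step of the third paragraph --- controlling the interaction between overlapping time windows so that the global maximum strictly decreases. The capillarity energy introduces no new combinatorial difficulty beyond the closed/free-boundary case, since Lemma~\ref{lmm:freezing_procedure} has exactly the same quantitative form as its classical analogues.
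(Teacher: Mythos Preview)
Your overall strategy---contradiction via the freezing lemma on a compact set of near-maximal times---matches the paper's, but there are two genuine problems.

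\textbf{The order of pull-tight and the a.m.\ selection is reversed, and your fix does not work.} The paper \emph{begins} with the pulled-tight minimizing sequence from Theorem~\ref{thm:pull-tight_procedure}, so property (3) is automatic, and then runs the contradiction argument \emph{on that fixed sequence} to exhibit, for each $k$, a time $t_k\in K_k=\{t:\fa(\s_t^k,\g_t^k)\ge \m-1/k\}$ at which the slice is $1/k$-a.m.\ in every admissible pair. You instead produce (1)--(2) first and then re-apply pull-tight to obtain (3). Your justification---that the pull-tight flow ``can be taken $C^1$-small on each slice'' and hence preserves almost-minimality---is not correct: the construction in Theorem~\ref{thm:pull-tight_procedure} deforms a slice by an amount governed by its varifold distance to the stationary set $\ami$, which has no a~priori relation to being $1/k$-a.m.\ in admissible pairs. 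A slice that is $1/k$-a.m.\ everywhere may still be far from stationary, and the pull-tight map will then move it substantially, destroying the a.m.\ property. The paper's ordering avoids this issue entirely.

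\textbf{The combinatorial step does not work with both sets.} At each bad time you apply Lemma~\ref{lmm:freezing_procedure} in \emph{both} $U_1(t^*)$ and $U_2(t^*)$. Locally this is fine (the two $V_i$ are disjoint), but when you globalise over overlapping time windows $I_j\cap I_{j+1}\ne\emptyset$, you need the spatial supports of the two composite deformations to be disjoint; there is no reason $V_1^j\cup V_2^j$ is disjoint from $V_1^{j+1}\cup V_2^{j+1}$. The paper handles this with Lemma~\ref{lmm:admisible_pairs_disjoints}: from two admissible pairs one can always select \emph{one} set from each so that the chosen sets are disjoint. The combinatorial construction then walks along the covering intervals $I_1,\dots,I_m$, at each step choosing a single $W_j\in\{V_j^1,V_j^2\}$ (occasionally splitting an interval in two) so that consecutive $W_j$ are disjoint. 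This selection is the actual content of the Almgren--Pitts combinatorics here; ``applies verbatim'' is not enough, and your two-set version does not reduce to it.
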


Before the proof of Lemma \ref{lmm:A-P_comb_lemma} we state the following simple and useful property.

\begin{lemma}\label{lmm:admisible_pairs_disjoints}
If $(U_1,U_2), (V_1,V_2)$ are pairs of non-empty open sets such that
\begin{gather}
\dist(U_1,U_2) \geq 2 \min\{\diam U_1,\diam U_2 \}
\\
\dist(V_1,V_2) \geq 2 \min\{\diam V_1,\diam V_2 \},
\end{gather}
then there exist two indices $i,j \in \{1,2\}$ such that $U_i \cap V_j = \emptyset$
\end{lemma}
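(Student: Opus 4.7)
The plan is to argue by contradiction. Suppose every one of the four intersections $U_i \cap V_j$ is nonempty, and try to derive a geometric inconsistency from the two separation hypotheses.

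First I would normalize: without loss of generality, reorder the indices so that $\diam U_1 \leq \diam U_2$ and $\diam V_1 \leq \diam V_2$; then, possibly swapping the roles of the pair $(U_1,U_2)$ with $(V_1,V_2)$, I may assume that $U_1$ realizes the smaller of the two minima, i.e.\ $\diam U_1 \leq \diam V_1$. So $U_1$ has the smallest diameter among all four sets.

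The key step is then to pick witnesses $x_{ij} \in U_i \cap V_j$ for $i,j \in \{1,2\}$ and look at the two points $x_{11}, x_{12}$. Both lie in $U_1$, so $|x_{11} - x_{12}| \leq \diam U_1$. But they also lie in $V_1$ and $V_2$ respectively, so the hypothesis on $(V_1,V_2)$ forces
\begin{equation*}
|x_{11}-x_{12}| \geq \dist(V_1,V_2) \geq 2 \min\{\diam V_1, \diam V_2\} = 2 \diam V_1 \geq 2 \diam U_1.
\end{equation*}
Combining these yields $\diam U_1 \geq 2 \diam U_1$, hence $\diam U_1 = 0$. Since $U_1$ is open and nonempty this is impossible, and the contradiction concludes the proof.

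There is no real obstacle here: the argument is a one-line pigeonhole between the two pairs once the indices are normalized. The only thing to be mindful of is the symmetric renaming at the start so that the diameter used on the upper bound side (coming from $U_1$) is indeed the smallest of the four, which is what makes the chain of inequalities collapse.
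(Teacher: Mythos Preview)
Your proof is correct and follows essentially the same approach as the paper: the same normalization of indices so that $\diam U_1$ is smallest, then the same key step of putting together the upper bound $|x_{11}-x_{12}|\le \diam U_1$ (from membership in $U_1$) with the lower bound coming from the separation hypothesis on $(V_1,V_2)$. Your endgame is in fact slightly cleaner than the paper's, which instead shows $|x-y|\ge \diam U_1$ for all $x\in U_1$, $y\in V_2$ and then invokes the fact that an open set does not attain its diameter; your direct contradiction $\diam U_1 \ge 2\diam U_1$ sidesteps that technicality.
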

\begin{proof}
Without loss of generality we can suppose that $\diam U_1 \leq \diam U_2$ and $\diam V_1 \leq \diam V_2$. Moreover we can suppose $\diam U_1 \leq \diam V_1$.

If $U_1 \cap V_1 = \emptyset$ there is nothing to prove. Otherwise, there exists $\xi \in U_1 \cap V_1$. Then, for every $x \in U_1$ and $y \in V_2$
\begin{equation}
\begin{split}
|x-y|
& \geq
|y - \xi| - |\xi - x| \\
& \geq
2 \diam V_1 - \diam U_1 \\
& \geq
\diam V_1
\\
& \geq
\diam U_1.
\end{split}
\end{equation}
Since $U_1$ is relatively open and $\pM$ is of class $\breg$, if $x,y$ both belonged to $U_1$, then one would have $|x-y|< \diam U_1$.
This shows that $U_1 \cap V_2 = \emptyset$.
\end{proof}
We can now pass to the proof of Lemma \ref{lmm:A-P_comb_lemma}.
\begin{proof}[Proof of Lemma \ref{lmm:A-P_comb_lemma}]
Let $\{\ctk{t}{k}\}^k$ be the minimizing sequence of sweepouts given by Theorem \ref{thm:pull-tight_procedure};
%
Up to passing to a subsequence, we can assume that for every $k \in \N$
\begin{equation}
\max_{t \in [0,1]} \fa\ctk{t}{k}
\leq
\m + \frac{1}{8k}.
\end{equation}
This suitable subsequence (not relabeled) is clearly still minimizing and we are going to prove that is the minimizing sequence stated in the theorem. Since by Theorem \ref{thm:pull-tight_procedure} every min-max sequence converges to a stationary pair up to subsequences, we have only to find a min-max sequence with the almost minimizing property.
For any $k \in \N$ let us define the set of times
\begin{equation}
K_k = \Big\{
t \in [0,1] \mid \fa\ctk{t}{k} \geq \m - \frac{1}{k}
\Big\}.
\end{equation}
$K_k$ is clearly compact.
To prove the existence of the min-max sequence with the desired properties, we argue by contradiction: if there exists $k \in \N$ such that for every $t \in K_k$ $\ctk{t}{k}$ is neither $\frac{1}{k}$-almost in $U_{1,t}$ nor in $U_{2,t}$ for some $\upair{t} \in \co$, we construct a sweepout $\tt{t}_{t \in \I}$ that is homotopic to $\ttk{t}{k}_{t \in \I}$ and satisfies $\ms \fa \tt{t} < \m$, which is absurd. Thus we fix such a $k \in \N$. Since $k$ is fixed, we drop the subscripts and superscripts $k$.

Let us assume, by contradiction, that for any $t \in K$
there exists $\upair{t} \in \co$ such that $\ct{t}$ is neither $\frac{1}{k}$-almost in $U_{1,t}$ nor in $U_{2,t}$.
\begin{steps}[wide,%
labelindent=5pt]
\item As first step, we want to show that there exists a covering $\{J_j\}_{j=1}^M$ of $K$ made of closed intervals with the following properties:
\begin{itemize}
\item
For every $j=1,\dots, M$, there exist $(a_j,b_j) \subset J_j$, an open set $W_j \subset \M$ and a sweepout $\ttk{t}{j}_{t \in \I}$ that is homotopic to $\ct{t}_{t \in \I}$ and satisfies:
\begin{enumerate}
\item $\ttk{t}{j}=\ct{t}$ for every $t \in \I \- J_j$;
\item $\ttk{t}{j} \- W_j = \ct{t} \- W_j$ for every $t \in \I$;
\item \label{pnt:c_comb_lemma_competitor} $\fa \ttk{t}{j} < \fa \ct{t} + \frac{1}{4k}$ for every $t \in [0,1]$;
\item \label{pnt:d_comb_lemma_competitor}$\fa \ttk{t}{j}  < \fa \ct{t} - \frac{1}{2k}$ for every $t \in [a_j,b_j]$.
\end{enumerate}
\item
\begin{equation}
K \subset \bigcup_{j = 1}^M (a_j,b_j)
\end{equation}
\item For every $j=1,\dots,M$, $J_j$ intersects at most $J_{{j-1}}$ and $J_{{j+1}}$;
\item If $i \neq j$ and $J_i \cap J_j \neq \emptyset$, then $W_i \cap W_j = \emptyset$.
\end{itemize}

By the assumptions, for every $t \in K$, $i=1,2$ and for any open set $V_{i,t} \supset\supset U_{i,t}$, let $\bar{\eta_t}$ the number given by Lemma \ref{lmm:freezing_procedure} and for every $0< \eta_t \leq \bar{\eta_t}$ let us define
$I_{t,\eta_t}= [t- \eta_{t}, t+ \eta_{t}]$.
We choose the sets $V_{i,t}$ such that
\begin{equation}
\dist(V_{1,t},V_{2,t}) \geq 2 \min\{\diam V_{1,t}, \diam V_{2,t}\}.
\end{equation}
By compactness of $K$, there exists a finite set of points $t_1 < \dots < t_m$ such that
\begin{equation}\label{eq:K_subset_interior_Ij}
K \subset \bigcup_{j=1}^m \interior{(I_{t_j,{\bar{\eta}}_{t_j}})}.
\end{equation}
Since for every $t_j$ we can choose $\eta_j \leq \bar{\eta}_{t_j}$, we can obtain that each $I_{t_j,\eta_j}$ intersects at most $I_{t_{j-1},\eta_{j-1}}$ and $I_{t_{j+1},\eta_{j+1}}$ and still
\begin{equation}\label{eq:K_subset_interior_Ij2}
K \subset \bigcup_{j=1}^m \interior{(I_{t_j,\eta_j})}.
\end{equation}
For every $j=1, \dots, m$ and $i=1,2$, we denote $V_j^i := V_{i,t_j}$ and $I_j := I_{t_j,\eta_j}$.

Starting from this covering, we want to construct the covering $\{J_j\}_{j=1}^M$ of $K$.
We begin by defining $J_1$ and $W_1$:
\begin{itemize}
\item If $I_1 \cap I_2 = \emptyset$, then we set $J_1 = I_1$, $W_1= V_1^1$;
\item If $I_1 \cap I_2 \neq \emptyset$, up to relabeling the sets $V_j^i$, by Lemma \ref{lmm:admisible_pairs_disjoints} we can suppose that $V_1^1 \cap V_2^1 = \emptyset$. Then we set $J_1=I_1$ and $W_1 = V_1^1$.
\end{itemize}
Now, in any of the two above cases, by Lemma \ref{lmm:admisible_pairs_disjoints}, we can assume that  $V_1^1 \cap V_2^1 = \emptyset$; to choose $J_2, W_2$,
we have to consider several cases:
\begin{enumerate}
\item\label{item:1_comb_lemma} If $I_2 \cap I_3 = \emptyset$, then we set $J_2= I_2$ and $W_2 = V_2^1$;
\item\label{item:2_comb_lemma} If $I_2 \cap I_3 \neq \emptyset$, by Lemma \ref{lmm:admisible_pairs_disjoints} there exists two indices $i,\l$ such that $V_2^i \cap V_3^\l = \emptyset$. Without loss of generality we can suppose $\l=1$.
\begin{enumerate}

\item\label{item:2a_comb_lemma} If $i=1$, we set $J_2=I_2$, $W_2= V_2^1$;

\item\label{item:2b_comb_lemma} If $i=2$, we can find two closed intervals $L,H$ such that
\begin{equation}
\interior{L} \cup \interior{H} = \interior{I_2}
\qquad
L \cap I_3 = \emptyset
\qquad
H \cap I_1 = \emptyset
\end{equation}
(the first condition implies that $L,H$ cover $I_2$ and that their interiors overlap in the middle of $I_2$).
We then set $J_2=L$, $W_2=V_2^1$, $J_3=H$, $W_3=V_2^2$.

\end{enumerate}
\end{enumerate}
To choose the next interval and open set, we proceed as follows:
\begin{itemize}
\item In the case \ref{item:1_comb_lemma}, we choose $J_3,W_3$ as we have done for $J_1, W_1$ (since $I_3 \cap I_2 = \emptyset$);
\item In the case \ref{item:2a_comb_lemma} we can choose $J_3,W_3$ as we have done for $J_2, W_2$;
\item In the case \ref{item:2b_comb_lemma} we can choose $J_4,W_4$ as we have done for $J_2,W_2$.
\end{itemize}
Thus we can continue this procedure until we complete the choice of all the intervals $I_j$; it is clear that, by construction, each each $J_j$ intersects at most $J_{j-1}$ and $J_{j+1}$ and that, if $J_j \cap J_\l \neq \emptyset$, then $W_j \cap W_\l = \emptyset$. Moreover, by \eqref{eq:K_subset_interior_Ij2} and the above construction, we have $K \subset \bigcup_{j=1}^M \interior{J_j}$. Hence for every $j=1, \dots, M$ we can choose, $a_j,b_j \in \interior{J_j}$  such that
\begin{equation}
K \subset \bigcup_{j=1}^M (a_j,b_j).
\end{equation}
This complete the definition of the covering $\{J_j\}_{j=1}^M$ and the open sets $W_j$; we have now to define the sweepouts $\ttk{t}{j}_{t \in \I}$ for $j=1,\dots, M$.

For every $j=1,\dots, M$ there exists $\l \in \{1,\dots,m\}$ and $i \in \{0,1\}$ such that $(a_j,b_j) \subset \interior{J_j} \subset I_\l=I_{t_\l,\eta_\l}$, $W_j = V_{\l}^i$ and $\ct{t_\l}$ is not $\frac{1}{k}$-almost minimizing in $W_j$. Thus, by Lemma \ref{lmm:freezing_procedure} there exists a sweepout $\ttk{t}{j}_{t \in \I}$ that is homotopic to $\ct{t}_{t \in \I}$ and such that
\begin{enumerate}
\item $\ttk{t}{j}=\ct{t}$ for every $t \in \I \- J_j$;
\item $\ttk{t}{j} \- W_j = \ct{t} \- W_j$ for every $t \in \I$;
\item \label{pnt:c_comb_lemma_competitor} $\fa \ttk{t}{j} < \fa \ct{t} + \frac{1}{4k}$ for every $t \in [0,1]$;
\item \label{pnt:d_comb_lemma_competitor}$\fa \ttk{t}{j}  < \fa \ct{t} - \frac{1}{2k}$ for every $t \in [a_j,b_j]$,
\end{enumerate}
which are the required conditions to complete the proof of the first step.

\item
We now want to ``glue together" the deformations $\ttk{t}{j}$ in an unique sweepout $\tt{t}_{t \in \I}$ that is homotopic to $\ct{t}_{t \in \I}$ and satisfies $\ms \fa \tt{t} < \m$, which is a contradiction.

We define the competitor sweeoput $\tt{t}_{t \in \I}$ as follows:
\begin{itemize}
\item If $t \notin \bigcup_{j=1}^M J_j$, then $\tt{t} = \ct{t}$;
\item If $t \in J_j \setminus(J_{j+1}\cup J_{j-1})$ then $\tt{t} = \ttk{t}{j}$;
\item If $t \in J_j \cap J_{j+1}$, then
\begin{gather}
\ts{t} \cap W_j = \ts{t}^j \cap W_j,
\quad
\ts{t} \cap W_{j+1} = \ts{t}^{j+1} \cap W_{j+1},
\quad
\ts{t} \- (W_j \cup W_{j+1}) = \s_t \setminus(W_j \cup W_{j+1});
\\
\tg{t} \cap W_j = \tg{t}^j \cap W_j,
\quad
\tg{t} \cap W_{j+1} = \tg{t}^{j+1} \cap W_{j+1},
\quad
\tg{t} \- (W_j \cup W_{j+1}) = \g_t \setminus(W_j \cup W_{j+1});
\end{gather}
\end{itemize}
These are all the possible cases, since $t$ can belong to at most two consecutive intervals. Clearly $\tt{t}_{t \in \I}$ is well-defined, because by construction of the intervals $J_j$ and open sets $W_j$, if $J_j \cap J_\l \neq \emptyset$, then $W_j \cap W_\l = \emptyset$; Since every $\ttk{t}{j}_{t \in \I}$ is homotopic to $\ct{t}_{t \in \I}$, also the sweepout $\tt{t}_{t \in \I}$ is homotopic to $\ct{t}_{t \in \I}$. The open sets $\tot{t}$ relative to the pairs $\tt{t}$ are defined in the same way, starting from the open sets $\tot{t}^j$ relative to $\ttk{t}{j}$.

We have now to estimate $\fa \tt{t}$ to get the contradiction:
\begin{enumerate}
\item If $t \notin K$, then $t$ can be in at most two of the intervals $J_j$; thus $\tt{t}$ coincides with $\ttk{t}{j}$ and $\ttk{t}{j+1}$ respectively in the two open sets $W_j, W_{j+1}$ and coincides with $\ct{t}$ in $\M \- (W_j \cup W_{j+1})$. Since for every $j$ it holds $\fa\ttk{s}{j} < \fa\ct{s} + \frac{1}{4k}$ for every $s \in \I$, we have
\begin{equation} \label{eq:estimate_a_competitor_comb}
\fa \tt{t}
<
\fa\ct{t} + \frac{2}{4k}
<
m_0 - \frac{1}{k} + \frac{1}{2k}
=
m_0 - \frac{1}{2k},
\end{equation}
where the second inequality is given by $t \notin K$, thus $\fa\ct{t} < \m - \frac{1}{k}$ .
\item If $t \in K$, then there exists $j$ such that $t$ belongs to at least one $(a_j,b_j)$ and at most in another $J_\l$ for $\l=j-1$ or $\l=j+1$. Hence, by the properties of the sweepouts $\ttk{s}{h}_{s \in \I}$ previously defined one has
\begin{gather}
\hn(\ts{t}^j \cap W_j) + a\hn(\tg{t}^j \cap W_j)
<
\hn(\s_{t} \cap W_j) + a\hn(\g_{t} \cap W_j) - \frac{1}{2k}
\\
\hn(\ts{t}^\l \cap W_\l) + a\hn(\tg{t}^\l \cap W_\l)
<
\hn(\s_{t} \cap W_\l) + a\hn(\g_{t} \cap W_\l) + \frac{1}{4k}.
\end{gather}
Thus
\begin{equation} \label{eq:estimate_b_competitor_comb}
\fa \tt{t}
\leq
\fa \ct{t} - \frac{1}{4k}
\leq
\m + \frac{1}{8k} - \frac{1}{4k}
=
\m - \frac{1}{8k},
\end{equation}
where we used $\fa \ct{t} \leq \m + \frac{1}{8k}$.
\end{enumerate}
The estimates \eqref{eq:estimate_a_competitor_comb} and \eqref{eq:estimate_b_competitor_comb} show that
\begin{equation}
\max_{t \in [0,1]} \fa \tt{t}
<
\m ,
\end{equation}
which is absurd since $\tt{t}_{t \in \I}$ is homotopic to $\ct{t}_{t \in \I}$.
\end{steps}
\end{proof}

\subsection{Proof of Proposition \ref{prop:almost_minimizing_min-max_sequences}}\label{subsec:proof_almost_minimizing_min-max}
We use the min-max sequence $\ck{k}^{k \in \N}$ given by Lemma \ref{lmm:A-P_comb_lemma} to prove Proposition \ref{prop:almost_minimizing_min-max_sequences}.

\begin{proof}[Proof of Proposition \ref{prop:almost_minimizing_min-max_sequences}]
Let us consider the min-max sequence $\ck{k}^{k \in \N}$ given by Lemma \ref{lmm:A-P_comb_lemma}. It converges to a stationary pair and, for every $k \in \N$, it is $\frac{1}{k}$-almost minimizing in any pair of open sets $(U_1,U_2) \in \co$.

We begin with a simple useful remark: if a surface is $\varepsilon$-a.m. in a set $U$ and $V \subset U$, then it is $\varepsilon$-a.m. also in $V$.
For every $x \in \M$ and $r>0$, we have $\bigl(B_r(x), \M \setminus B_{9r}(x)\bigr) \in \co$, provided that $r$ is small enough to have $\M \setminus B_{9r}(x) \neq \emptyset$.
\begin{itemize}

\item If for every $x \in \M$ there exists $\rho(x)>0$ and $N(x) \in \N$ such that $\ck{k}$ is $\frac{1}{k}$-almost minimizing in $B_{\rho(x)}(x)$ for every $k \geq N(x)$, then the proof is complete if we set $\e_k = \frac{1}{k}$, $r(x)=\rho(x)$, since $\ck{k}$ is $\frac{1}{k}$-a.m. in every $\an \in \anul{r(x)}{x}$ for $k \geq N(x)$.

\item If there exists $x \in \M$ and a subsequence $\ck{k_j}_{j \in \N}$ such that every $\ck{k_j}$ is not $\frac{1}{k_j}$-a.m. in $B_{1/j}(x)$, then for every $j \in \N$ the pair $\ck{k_j}$ is $\frac{1}{k_j}$-a.m. in $\M \- B_{9/j}(x)$. For every $j \in \N$ we define
\begin{equation}
\e_j = \frac{1}{k_j};
\quad
\tk{j}= \ck{k_j};
\quad
r(y)
=
\begin{cases}
\diam \M & \text{if } y=x, \\
\dist(y,x) & \text{if } y \neq x.
\end{cases}
\end{equation}
We claim that the sequence $\tk{j}_{j \in \N}$ and the function $r$ satisfy the conclusion. In fact for every $\an(x,s,t) \in \anul{r(x)}{x}$ there exists $N \in \N$ such that if $j > N$ then $\an(x,s,t) \subset \M \- B_{1/9j}(x)$ and by the initial remark $\tk{j}$ is $\frac{1}{k_j}$-a.m. in $\an(x,s,t)$. If $y \neq x$ and $\an(y,s,t) \in \anul{r(y)}{y}$, then $t < \dist(y,x)$ and there exists $N \in \N$ such that if $j > N$ then $\dist(y,x) - \frac{1}{9j} < t$. Thus $\an(y,s,t) \subset \M \- B_{1/9j}(x)$ and  by the initial remark $\tk{j}$ is $\frac{1}{k_j}$-a.m. in $\an(x,s,t)$.
\end{itemize}
\end{proof}

\section{Bernstein's Theorem, curvature estimates and compactness}\label{sec6}
In this section we report some properties of (smooth) stable pairs for the capillarity functional.
The second variation of the capillarity functional $\fa$ on a pair of surfaces $\c$ in $\M$ induces the so called stability operator $Q$, which is a quadratic form on the set of smooth functions with compact support.

\begin{proposition}[Stability operator]
Let $\c$ be a pair of surfaces in $\M$. Then for any $X \in \Xt$ we have
\begin{equation}
\dtfa \pair [X]
=
Q(u),
\end{equation}
where $u(x) = X(x) \cdot \nu_\s(x)$ for every $x \in \s$, and
\begin{equation}\label{eq:definition_stability_operator}
Q(u)
=
\int_\s |\nabla u|^2 - |\as|^2 u^2 \dif \haus{2}
-
\int_{\de \s} q u^2 \dif \haus{1},
\end{equation}
where
\begin{equation}
q
=
\frac{H_{\pM}}{\sin \theta} + \cot \theta H_\s - \kappa_{\de \s}.
\end{equation}
Here $\kappa_{\de \s}= {D_\tau \eta_\s}\cdot{\tau}$, where $\tau$ is a unit tangent vector field on $\de \s$.
In particular, $\c$ is stable for $\fa$ if and only if the operator $Q$ is non-negative definite on $C_c^\infty(\M)$.
\end{proposition}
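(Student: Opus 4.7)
The plan is to differentiate twice the map $t\mapsto \hn(\s_t)+a\hn(\g_t)$ along the flow $\psi_t$ of $X$, and to use the contact angle to express every boundary contribution as a quadratic form in $u=X\cdot\nu_\s$. Since $X\in\Xt$ is tangent to $\pM$, $\psi_t$ preserves $\pM$; hence $\g_t:=\psi_t(\g)\subset\pM$ and $\partial\s_t\subset\pM$ throughout. Write $X|_\s=u\,\nu_\s+Y$ with $Y\in T\s$; the tangential piece $Y$ will eventually contribute only through cross terms that must cancel out.

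At every point of $\ps$, the two orthonormal pairs $(\nu_\s,\eta_\s)$ and $(N,\eta_\g)$ span the same plane orthogonal to $\ps$, where $\eta_\g$ denotes the outer conormal of $\g$ in $\pM$. Together with the contact angle condition $\eta_\s\cdot N=\sin\theta$ and $X\cdot N=0$ on $\pM$, this yields (with compatible orientations)
\[
\eta_\s=\sin\theta\,N+\cos\theta\,\eta_\g,\qquad \nu_\s=-\cos\theta\,N+\sin\theta\,\eta_\g,
\]
so that $u=\sin\theta\,(X\cdot\eta_\g)$ and $X\cdot\eta_\s=\cot\theta\,u$ on $\ps$. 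These identities are what force the whole second variation to collapse into a form depending only on $u$.

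I would then apply the classical second variation formula for the area of a surface with boundary to $\s_t$: at $t=0$ it produces the bulk Jacobi integrand $|\nabla u|^2-|\as|^2 u^2$, a boundary term involving $A_\s(\eta_\s,\eta_\s)u^2$ together with the geodesic curvature $\kappa_{\ps}^{\s}$ of $\ps$ inside $\s$, plus acceleration terms in $Z=D_XX$ weighted by $H_\s$. For the boundary piece $\g\subset\pM$, which has codimension zero in $\pM$, one obtains
\[
a\,\tfrac{d^2}{dt^2}\big|_{t=0}\hn(\g_t)=a\int_\g \dive_{\pM}(Z)\,d\hn+a\int_{\partial\g}\kappa_{\ps}^{\pM}(X\cdot\eta_\g)^2\,d\hu+\text{lower order},
\]
where $\kappa_{\ps}^{\pM}$ denotes the geodesic curvature of $\ps$ inside $\pM$.

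To conclude, the acceleration terms in $Z$ group into $\dfa(\s,\g)[Z]$ which, by the first variation formula, reduces to an integral involving $H_\s$ alone. Substituting the angle identities from the second paragraph, the $A_\s(\eta_\s,\eta_\s)u^2$ and $\kappa_{\ps}^{\s}u^2$ terms combine with $\kappa_{\ps}^{\pM}(u/\sin\theta)^2$ via the decomposition $\kappa_{\ps}^{\s}=\kappa_{\ps}-A_\s(\eta_\s,\eta_\s)$ and a normal-curvature computation on $\pM$ that extracts $H_{\pM}/\sin\theta$; the $\cot\theta\,H_\s$ contribution arises from the leftover tangential $H_\s$ piece, recast through $X\cdot\eta_\s=\cot\theta\,u$ and $a=\cos\theta$. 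All first-order cross terms in $Y$ and $Z$ must cancel, leaving exactly $-\int_{\ps}qu^2\,d\hu$ with $q=H_{\pM}/\sin\theta+\cot\theta\,H_\s-\kappa_{\ps}$. The main obstacle is precisely this final bookkeeping: one must work in an orthonormal frame adapted to $\ps$ and verify, via a careful decomposition of the second fundamental form of $\pM$ along $\ps$, that the cross terms cancel and only the quadratic form $Q(u)$ survives.
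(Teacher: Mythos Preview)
The paper does not actually prove this proposition: it simply refers the reader to \cite[Appendix]{ros97} and \cite[Section~2.1]{hong2021capillary}. Your outline is precisely the standard computation carried out in those references---differentiate the area of $\s_t$ and $\g_t$ separately, exploit the frame relations at $\ps$ forced by the contact angle and by $X\cdot N=0$, and collect boundary terms. Your identities $\eta_\s=\sin\theta\,N+\cos\theta\,\eta_\g$, $\nu_\s=-\cos\theta\,N+\sin\theta\,\eta_\g$, and hence $X\cdot\eta_\s=\cot\theta\,u$ on $\ps$, are correct and are exactly the mechanism that reduces everything to $u$.

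One point in your sketch deserves sharpening. The claim that $\dtfa$ depends only on $u=X\cdot\nu_\s$ (rather than on the full vector field $X$) requires that $(\s,\g)$ be stationary for $\fa$, so that $H_\s\equiv 0$ and $\eta_\s+a\zeta\perp\pM$ along $\ps$; otherwise the acceleration term $\dfa(\s,\g)[D_XX]$ and the cross terms in the tangential component $Y$ do not vanish. The statement as written in the paper is informal on this point (inheriting the convention from Ros--Souam, who work with constant-mean-curvature capillary surfaces where $H_\s$ is a constant Lagrange multiplier). If you want your argument to be self-contained, assume stationarity at the outset; then the $\cot\theta\,H_\s$ term in $q$ vanishes anyway in the present unconstrained setting, and the bookkeeping you describe goes through.
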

For a proof we refer to \cite[Appendix]{ros97} and to \cite[section 2.1]{hong2021capillary}.
If $\c$ is stationary for $\fa$ in a half space, then $H_{\pM}=0$ and $H_\s=0$. Moreover, if the half space is defined by \(\{x_1\ge 0\}\), by testing the stability inequality with the function 
\[
 u=\z \Bigl(\frac{1-\cos \theta {\nu_\s}\cdot{e_1} }{\sin \theta}\Bigr)
\]
we obtain the following proposition, compare with the proof of \cite[Theorem 1.5]{hong2021capillary}.
%
\begin{proposition}[Stability inequality for Capillarity functional]\label{prop:stability_inequality_capillarity}
Assume that $\NN$ is a half-space. Then there exists an universal constant $c(a)>0$ such that, for every pair $\c$ which is stationary and stable for $\fa$ and for every $\z \in C_c^\infty(\ru)$ we have
\begin{equation}\label{eq:stability_inequality_cap}
\int_\s \z^2 |\as|^2 \dif \hn
\leq
c \int_\s  |\nabla \z|^2 \dif \hn,
\end{equation}
where $|\as|^2$ is the norm squared of the second fundamental form of $\s$.
\end{proposition}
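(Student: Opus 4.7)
The plan is to insert the test function $u=\z\phi$ with $\phi := (1-\cos\theta\,(\nu_\s\cdot e_1))/\sin\theta$ into the stability inequality $Q(u)\ge 0$, where $e_1$ is the inward unit normal to $\partial\NN$. Since $\NN$ is a half-space, $H_{\partial\NN}\equiv 0$, and by stationarity $H_\s\equiv 0$, so the boundary potential in \eqref{eq:definition_stability_operator} simplifies to $q=-\kappa_{\de\s}$. The choice of $\phi$ is tailored so that (i) it satisfies a Jacobi-type equation on $\s$ with a useful source term, (ii) it is pinched between two positive constants depending only on $\theta$, and (iii) the boundary contribution in the stability inequality cancels exactly.

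First I would verify the bulk identity $\Delta_\s\phi+|\as|^2\phi = |\as|^2/\sin\theta$, which follows by applying the componentwise Jacobi equation $\Delta_\s(\nu_\s\cdot e_1) = -|\as|^2(\nu_\s\cdot e_1)$ valid for any minimal surface. The bounds $\tan(\theta/2)\le \phi\le \cot(\theta/2)$ are immediate from $|\nu_\s\cdot e_1|\le 1$.

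Next I would work out the boundary. Along $\de\s$ choose the orthonormal frame $\{e_1, N_c,\tau\}$ where $\tau$ is the unit tangent to $\de\s$ and $N_c$ is the in-plane co-normal to $\de\s\subset\partial\NN$. The Young angle condition $\eta_\s\cdot(-e_1)=\sin\theta$, with the orientation choice making $\eta_\s\cdot N_c=\cos\theta$, forces $\nu_\s = \cos\theta\,e_1+\sin\theta\,N_c$ on $\de\s$, whence $\phi\equiv\sin\theta$ there. Using the Weingarten identity $\nabla_\s(\nu_\s\cdot e_1)=-\as(\cdot,e_1^\top)$ with $e_1^\top = -\sin\theta\,\eta_\s$ gives $\partial_{\eta_\s}\phi=-\cos\theta\,\as(\eta_\s,\eta_\s)$. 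On the other hand, since $e_1$ is parallel along $\de\s$, differentiating $\eta_\s=-\sin\theta\, e_1+\cos\theta\, N_c$ and $\nu_\s=\cos\theta\, e_1+\sin\theta\, N_c$ along $\tau$ yields $\kappa_{\de\s} = \cos\theta\,\kappa_{N_c}$ and $\as(\tau,\tau)=-\sin\theta\,\kappa_{N_c}$ with $\kappa_{N_c}:=D_\tau N_c\cdot\tau$; by minimality $\as(\eta_\s,\eta_\s)=-\as(\tau,\tau)=\sin\theta\,\kappa_{N_c}$, and hence the key cancellation
\[
\partial_{\eta_\s}\phi - q\phi = -\cos\theta\sin\theta\,\kappa_{N_c}+\sin\theta\cos\theta\,\kappa_{N_c}=0.
\]

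Finally, I would insert $u=\z\phi$ into $Q(u)\ge 0$ and integrate by parts the bulk term using the Jacobi-type equation for $\phi$ to reach
\[
\int_\s \frac{\z^2\phi|\as|^2}{\sin\theta}\dif\hn \le \int_\s\phi^2|\nabla\z|^2\dif\hn + \int_{\de\s}\z^2\phi\bigl(\partial_{\eta_\s}\phi-q\phi\bigr)\dif\hu.
\]
The boundary integral vanishes by the previous step, and the two-sided bounds on $\phi$ absorb all $\theta$-dependencies into a constant $c(a)$. The main obstacle is precisely the boundary cancellation: it hinges simultaneously on flatness of $\partial\NN$, minimality of $\s$, and the Young condition, and would fail if any of these were dropped, which is exactly why the half-space hypothesis is essential.
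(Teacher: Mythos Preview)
Your proposal is correct and follows essentially the same approach as the paper: both use the test function $u=\z\phi$ with $\phi=(1-\cos\theta\,\nu_\s\cdot e_1)/\sin\theta$ and the same two-sided bounds on $\phi$. The only difference is that the paper simply quotes the identity $Q(\z\phi)=\int_\s\phi^2|\nabla\z|^2-\z^2\phi\,|\as|^2/\sin\theta$ from \cite{hong2021capillary}, whereas you derive it in full by verifying the Jacobi-type equation for $\phi$ and the boundary cancellation $\partial_{\eta_\s}\phi-q\phi=0$.
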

\begin{proof}
Without loss of generality, we can assume that $\NN=\{x_1 \geq 0\}$. Let us define
\begin{equation}
\varphi(x)
=
\frac{1-\cos \theta {\nu_\s(x)}\cdot{e_1} }{\sin \theta}
\end{equation}
and let $\z \in C_c^\infty(\ru)$ be any test function.
The computations in the first part of the proof of \cite[Theorem 1.5]{hong2021capillary} yield (see \cite[(4.26)]{hong2021capillary})
\begin{equation}
Q(\z \varphi)
=
\int_{\s} \varphi^2 |\nabla \z|^2 - \z^2 \varphi \frac{|\as|^2}{\sin \theta}.
\end{equation}
Since
\begin{equation}
0 < \frac{1-\cos \theta}{\sin \theta} \leq \varphi \leq \frac{1+ \cos \theta}{\sin \theta},
\end{equation}
the result easily follows.
\end{proof}

The stability inequality is used to obtain a Bernstein-type theorem for stationary and stable surfaces with respect to the capillarity energy  in a three-dimensional half space.

\begin{theorem}[Bernstein Theorem]\label{thm:Bernstein_cap}
Let us assume that $\pas$ is stationary and stable for $\fa$ in $\NN \- \{0\}$, where $\NN := \{x_1 \geq 0\} \subset \R^3$, and that there exists a constant $M>0$ such that
\begin{equation}\label{eq:volume_ratio_bernstein}
\sup_{R \in (0,\infty)} \frac{\haus{2}(\s \cap B_R(0))}{R^2}
\leq M;
\end{equation}
Then $\s$ and $\g$ are half planes and $\s$ forms an angle $\theta$ with $\de \NN$ such that $a = \cos \theta$.
\end{theorem}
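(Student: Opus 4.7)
The plan is to apply the stability inequality of Proposition \ref{prop:stability_inequality_capillarity} with a logarithmic cutoff function and conclude that the second fundamental form of $\s$ vanishes identically, which forces $\s$ to be planar; the angle and volume bounds then pin down the geometry. Since the area growth hypothesis \eqref{eq:volume_ratio_bernstein} is exactly quadratic in $R$, which is the critical scaling for a two-dimensional surface in $\ru$, an ordinary compactly supported cutoff of bounded energy is not enough: a logarithmic cutoff is needed so that the $1/\log$ factor arising from the cutoff gradient overcomes the $R^2$ area growth.

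Concretely, I would construct $\z_{\delta,R}(x) = \phi_R(|x|)\psi_\delta(|x|) \in C_c^\infty(\ru)$, where $\phi_R$ equals $1$ on $B_R$, vanishes outside $B_{R^2}$, and satisfies $|\phi_R'(r)| \leq 1/(r \log R)$ on its transition annulus, while $\psi_\delta$ equals $1$ outside $B_{\sqrt{\delta}}$, vanishes on $B_\delta$, and satisfies $|\psi_\delta'(r)| \leq 1/(r \log (1/\delta))$ on its transition annulus. The inner cutoff $\psi_\delta$ is required because stability is only assumed on the punctured half-space $\NN \setminus \{0\}$. Substituting $\z_{\delta,R}$ into \eqref{eq:stability_inequality_cap} and estimating the gradient integral by a dyadic decomposition of the two transition annuli, together with the bound $\haus{2}(\s \cap B_\rho) \leq M \rho^2$, yields
\begin{equation*}
\int_{\s \cap (B_R \setminus B_{\sqrt{\delta}})} |\as|^2 \dif \hn
\leq c \int_\s |\nabla \z_{\delta,R}|^2 \dif \hn
\leq C(a,M)\Bigl(\frac{1}{\log R} + \frac{1}{\log (1/\delta)}\Bigr).
\end{equation*}
Sending $R \to \infty$ and $\delta \to 0$ then forces $\as \equiv 0$ on the whole of $\s$.

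Once $\as \equiv 0$, every connected component of $\s$ lies in an affine plane, and since $\s$ is stationary for $\fa$ its boundary must lie on $\de \NN$; combined with the smooth contact-angle relation $\eta_\s \cdot N = \sin\theta$, each such component is a half-plane meeting $\de \NN$ at the prescribed angle $\theta$. Quadratic area growth bounds the number of such half-planes, and the compatibility of $(\s,\g)$ with an underlying open set $\om$ in the sense of Definition \ref{def:compatible_pair_open} rules out multiple sheets: two distinct half-planes through $\de \NN$ at angle $\theta$ on the same side would prevent the existence of a consistent $\om$ with $\s = \bi \om$. The set $\g$ is then forced to be the complementary half-plane in $\de \NN$ by the matching $\de \g = \de \s$. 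The principal obstacle is the logarithmic-cutoff estimate itself, where the critical scaling makes each cutoff piece borderline and all three factors (inner cutoff, outer cutoff, area growth) must be tracked with sharp constants; the remainder is classical planar classification.
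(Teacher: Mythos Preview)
Your proposal is correct and follows essentially the same approach as the paper: the paper also tests the stability inequality \eqref{eq:stability_inequality_cap} with a logarithmic cutoff (a single function $\z_N$ vanishing on $B_{e^{-2N}}$ and outside $B_{e^{2N}}$, equal to $1$ on $B_{e^N}\setminus B_{e^{-N}}$) and combines it with the quadratic area bound to obtain $\int_{\s\cap(B_{e^N}\setminus B_{e^{-N}})}|\as|^2\le 2cMe^2/N$, then sends $N\to\infty$ to get $\as\equiv 0$. Your separate parameters $\delta,R$ and the more detailed post-flatness classification are minor elaborations on the same argument.
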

\begin{proof}
The proof is classical and we just point out that it is enough to test \eqref{eq:stability_inequality_cap} with the following logarithmic cut-off function, for $N \in \N$:
\begin{equation}
\z_N(x)
=
\begin{cases}
0 & \text{if }|x| \leq e^{-2N} \\
2 + \frac{\log(|x|)}{N} & \text{if } e^{-2N} \leq |x| \leq e^{-N}\\
1 & \text{if } e^{-N} \leq |x| \leq e^N\\
2 - \frac{\log(|x|)}{N} & \text{if } e^N \leq |x| \leq e^{2N}\\
0 & \text{if } |x| \geq e^{2N}.
\end{cases}
\end{equation}
This, together with the area bound \eqref{eq:volume_ratio_bernstein}, gives
\begin{equation}
\int_{\s \cap \big( B_{e^N} \- B_{e^{-N}} \big)} |\as|^2 \dif \haus{2}
\leq
\frac{2c M e^2}{N}.
\end{equation}
By sending $N \to \infty$, we obtain $\as \equiv 0$, hence the conclusion.
\end{proof}

As it is well known, Bernstein theorem as above implies  local curvature estimates, see \cite[Theorem 1.6]{hong2021capillary} for the (classical) argument.
\begin{theorem}[Curvature estimates for stable surfaces]\label{thm:curvature_estimates}
There exists a constant $\Lambda = \Lambda(a) >0$ such that, for every $\pas$ stationary stable pair for $\fa$ in $\M$, it holds
\begin{equation}
|\as|(x) \dist\nolimits_{\s}(x, \de \s \- \pM) \leq \Lambda
\qquad
\forall x \in \s.
\end{equation}
\end{theorem}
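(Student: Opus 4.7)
The plan is to argue by contradiction through a standard blow-up procedure in the spirit of Choi--Schoen. Suppose the estimate fails: there exist a sequence of stationary stable pairs $\ct{k}$ for $\fa$ in $\M$ and points $y_k \in \s_k$ such that $|\ask|(y_k)\,\dist_{\s_k}(y_k, \de \s_k \- \pM) \to \infty$. A standard point-picking lemma then allows one to replace $y_k$ by points $x_k \in \s_k$ at which $|\ask|$ is almost maximal on an intrinsic ball of radius proportional to $d_k(x_k)/2$, where $d_k(\cdot) := \dist_{\s_k}(\cdot, \de \s_k \- \pM)$; setting $\lambda_k := |\ask|(x_k)$ one has $\lambda_k d_k(x_k) \to \infty$ and $|\ask|(y) \leq 2\lambda_k$ on the intrinsic ball of radius $\lambda_k d_k(x_k)/2$ around $x_k$.

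I would then rescale by $\lambda_k$ around $x_k$: set $\ts{k} := \dil{x_k,1/\lambda_k}(\s_k)$, $\tg{k} := \dil{x_k,1/\lambda_k}(\g_k)$, and $\tilde{\M}_k := \dil{x_k,1/\lambda_k}(\M)$. Scale invariance of the stability operator $Q$ from \eqref{eq:definition_stability_operator} implies that $\tt{k}$ is still stationary and stable for $\fa$ inside $\tilde{\M}_k$, with $|\astk|(0) = 1$ and $|\astk| \leq 2$ on intrinsic balls of radius $\lambda_k d_k(x_k)/2 \to \infty$. I then distinguish two cases according to the behavior of $\lambda_k \dist(x_k, \pM)$.

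In the interior case $\lambda_k \dist(x_k, \pM) \to \infty$, the rescaled ambient sets $\tilde{\M}_k$ exhaust $\R^3$ and $\tg{k}$ escapes to infinity on every fixed compact set. Interior monotonicity together with $|\astk| \leq 2$ yields quadratic area growth, and standard graphical Allard-type regularity provides smooth subsequential convergence of $\ts{k}$ on compact sets to a complete stationary stable minimal surface $\s_\infty \subset \R^3$ with $|A_{\s_\infty}|(0)=1$ and quadratic area growth. The same logarithmic cutoff computation as in the proof of Theorem \ref{thm:Bernstein_cap}, applied to the classical stability inequality $\int_{\s_\infty} \z^2 |A_{\s_\infty}|^2 \leq \int_{\s_\infty} |\nabla \z|^2$ for stable minimal surfaces, forces $A_{\s_\infty} \equiv 0$, contradicting $|A_{\s_\infty}|(0) = 1$. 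In the boundary case $\lambda_k \dist(x_k, \pM)$ remains bounded, the $\breg$-regularity of $\pM$ guarantees that $\tilde{\M}_k$ converges locally smoothly to a half-space $\NN$. Proposition \ref{prop:monotonicity_formula_infty} applied to $(V_{\ts{k}},W_{\tg{k}})$ gives uniform quadratic area bounds on compact sets; combined with $|\astk|\leq 2$, the fixed contact angle condition at $\de \tilde{\M}_k$, and Schauder estimates for the oblique derivative quasilinear elliptic problem solved by the graph representation of $\ts{k}$ over the tangent plane at $x_k$, one obtains $\breg$ convergence up to the boundary to a stationary stable pair $(\s_\infty,\g_\infty)$ in $\NN$ with $|A_{\s_\infty}|(0) = 1$ and quadratic area growth. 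Theorem \ref{thm:Bernstein_cap} then forces $\s_\infty$ to be a half-plane, a contradiction.

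The main obstacle is the boundary case: promoting the curvature bound $|\astk|\leq 2$ and the fixed contact angle condition into genuine $\breg$-compactness up to $\de \tilde{\M}_k$ requires a careful graphical parametrization of $\ts{k}$ in Fermi-type coordinates adapted to $\de \tilde{\M}_k$, together with boundary Schauder theory for the oblique quasilinear boundary value problem associated with the contact angle condition. Once this compactness is in place, passing stationarity and stability to the limit is routine from the first variation formula and dominated convergence applied to $Q$, and the contradiction with Theorem \ref{thm:Bernstein_cap} closes the argument.
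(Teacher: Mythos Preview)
Your approach is exactly the classical blow-up argument that the paper has in mind: the paper does not give its own proof of this theorem but simply records that the Bernstein theorem (Theorem~\ref{thm:Bernstein_cap}) implies curvature estimates by ``the (classical) argument'' and cites \cite[Theorem~1.6]{hong2021capillary}. The Choi--Schoen point-picking, rescaling, and dichotomy between interior and boundary blow-up that you describe is precisely that argument, and your identification of the boundary compactness step as the only nontrivial point is accurate.

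One caveat worth making explicit: your claim that Proposition~\ref{prop:monotonicity_formula_infty} yields uniform quadratic area bounds for the rescaled surfaces $\ts{k}$ implicitly requires a uniform local mass bound on the original sequence $\ct{k}$, since monotonicity controls the density ratio at small scales by the ratio at a fixed scale, and the latter is only bounded if $\fa\ct{k}$ is. The theorem as stated carries no such assumption, and the paper's introduction flags exactly this: the estimate ``can be easily obtained if one assumes an a priori area growth on the surface (a condition which is met in our construction) and has been recently established in full generality in \cite{hong2021capillary}''. So your sketch is correct under an implicit area-growth hypothesis, which suffices for every application of Theorem~\ref{thm:curvature_estimates} in this paper; the unconditional statement requires the additional work carried out in \cite{hong2021capillary}.
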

A straightforward consequence of the above curvature estimates is the following compactness result.
\begin{theorem}[Compactness for stable pairs]\label{thm:compactness_stable_capillarity}
Let $U\subset \M$ be open and let $\ck{k}^k$ be a sequence of stable stationary pairs for $\fa$ in $\M$ satisfying
\begin{enumerate}
\item For every $k \in \N$, $\s_k$ and $\g_k$ are smooth up to the boundary;
\item every $\ck{k}$ is stationary and stable for $\fa$ in $U$;
\item $\sup_k \fa\ck{k} < +\infty$;
\end{enumerate}
Then, up to subsequences, $\ck{k}$ converges in the sense of varifolds to a stationary stable pair $\pair$ of varifolds for $\fa$ such that:
\begin{enumerate}
\item $W = \g$ where $\g \subset \pM$ is a surface with smooth boundary;
\item $V$ is, up to multiplicity, a smooth stable minimal surface $\s$ in $U$ with boundary $\de \s = \de \g$ and meets $\pM$ with contact angle $\theta$.
\item For every open set $U' \cc U$, the convergences $\s_k \to \s$ and $\g_k \to \g$ in $U'$ are smooth.
\end{enumerate}
\end{theorem}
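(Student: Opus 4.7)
The plan is to extract a weak-$\ast$ subsequential limit of the pairs $\ck{k}$, then upgrade weak convergence to smooth convergence via the curvature estimates of Theorem \ref{thm:curvature_estimates}, and finally identify the limit pair and verify stationarity, the contact angle, and stability.

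First, by assumption~(3), the total masses $\norm{V_k}(\M) + a\norm{W_k}(\pM) = \fa\ck{k}$ are uniformly bounded, so Banach--Alaoglu together with metrizability of the weak-$\ast$ topology on bounded sets yields (a subsequence of) $V_k \wto V$ in $\V_2(\M)$ and $W_k \wto W$ in $\V_2(\pM)$. Since $\g_k \subset \pM$ and $\pM$ is a $2$-submanifold, each $W_k$ has tangent plane pinned to $T_x\pM$ and density at most~$1$; both properties pass to the weak-$\ast$ limit, so $W = \theta\,\hn\restr{\pM}\otimes \delta_{T_x\pM}$ with $\theta \in L^\infty(\pM;[0,1])$. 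The first variation map $\pair \mapsto \dfa\pair[X]$ is weak-$\ast$ continuous against $C^1$ test fields, so the stationarity identity \eqref{eq:def_contact_angle} with $H = 0$ passes to the limit for every $X \in \Xt$ compactly supported in $U$; hence $(V,W)$ is stationary for $\fa$ in $U$.

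Next, I upgrade weak convergence to smooth convergence on any $U'\cc U$. Applying Theorem \ref{thm:curvature_estimates} locally to $\ck{k}$ yields a uniform bound $\sup_k \sup_{\s_k\cap U'} |\ask| \le \Lambda(U')$. Together with the area bound, this permits a standard graphical decomposition: $\s_k\cap U'$ is locally a finite union of normal $C^2$-graphs over the tangent planes of the eventual limit, both at interior points and, via Fermi coordinates along $\pM$ together with the contact-angle condition, at boundary points on $\pM$. Arzelà--Ascoli then gives $C^{2,\alpha}_{\loc}$ convergence of a further subsequence, and elliptic bootstrapping for the minimal surface system with the oblique (capillarity) boundary condition on $\pM \in \breg$ upgrades this to smooth convergence on $U'$, with each connected component of the limit $\s\subset U\cap \M$ appearing with integer multiplicity $m\ge 1$. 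Since every $\s_k$ meets $\pM$ with angle $\theta$, so does $\s$, and locally $V = m\,\hn\restr{\s}\otimes \delta_{T_x\s}$.

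Finally, I identify $W$ and pass stability to the limit. Since $\g_k$ is the open region of $\pM$ whose relative boundary coincides with $\de \s_k$, and $\de \s_k \to \de \s$ smoothly in $\pM \cap U'$, the characteristic functions $\chi_{\g_k}$ converge in $L^1_{\loc}(\pM\cap U)$ to $\chi_\g$, where $\g\subset \pM$ is the smooth open region bounded by $\de \s$; hence $\theta = \chi_\g$ and $W\restr{\pM\cap U} = \g$, so $\de\g = \de\s$. Stability is inherited by smooth convergence: given $u\in C_c^\infty(\s\cap U)$, pulling back through the local graph charts produces $u_k\in C_c^\infty(\s_k\cap U)$ with $u_k\to u$ in $C^1$, while $|\ask|^2\to |\as|^2$ and $\kappa_{\de \s_k} \to \kappa_{\de \s}$ (with $H_{\pM}$ a fixed smooth datum), so $Q_k(u_k)\ge 0$ passes to $Q(u)\ge 0$.

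The main obstacle is smooth convergence \emph{up to} $\pM$: the estimate of Theorem \ref{thm:curvature_estimates} applies at boundary points on $\pM$ (as such boundary is not ``free'' in the sense of that estimate), but turning the pointwise curvature bound into uniform Schauder estimates on normal graphs, while preserving the contact angle in the limit, requires careful handling of the oblique capillarity boundary value problem together with the $\breg$ regularity of $\pM$. This step, though standard in the theory of oblique elliptic boundary value problems, is the technical heart of the argument.
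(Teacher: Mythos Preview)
Your approach is exactly the one the paper has in mind: the paper does not give a proof of this theorem at all, merely introducing it as ``a straightforward consequence of the above curvature estimates'' (Theorem~\ref{thm:curvature_estimates}). Your outline---extract a varifold limit by mass bounds, upgrade to smooth convergence on $U'\cc U$ via the uniform curvature bound and graphical decomposition, then read off the structure of the limit and pass stability---is precisely how one unpacks that sentence, and you correctly flag the oblique boundary regularity near $\pM$ as the only place requiring care.
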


\section{Replacements}\label{sec7}
To prove regularity for  $\pair$  we follow Almgren and Pitts and introduce  the notion of replacement

\begin{defi}[Replacements]
Let $\pair$ be stationary for $\fa$ in $\M$ and let $U \subset \M$ be relatively open; a pair $\tpair$ is a \emph{replacement} of $\pair$ in $U$ if
\begin{enumerate}
\item $\tpair$ is stationary for $\fa$ in $\M$ and $\tv = V$ and $\tw = W$ in $\M \- U$;
\item $\ntv(\M) = \nv(\M)$ and $\ntw(\M) = \nw(\M)$;
\item $\tv$ and $\tw$ are induced in $U$ by smooth surfaces $\ts{}$ and $\tg{}$ with boundary such that the pair $\tt{}$ is stationary and stable for $\fa$ in $U$.
\end{enumerate}
\end{defi}
The key result of this section is the existence of replacements in every annulus $\an$ in $\anul{r(x)}{x}$.
\begin{proposition}[Existence of replacements]\label{prop:existence_replacements}
Let $\ck{k}^k, \pair, r$ be respectively the min-max sequence, the pair of varifolds and the function $r$ given by Proposition \ref{prop:almost_minimizing_min-max_sequences}. Let $x \in \M$ and let $\an:= \an(x,r,s) \in \anul{r(x)}{x}$; then there exists a min-max sequence $\tk{k}^k$ and a function $\tilde{r}: \M \to (0,+\infty)$ such that
\begin{itemize}
\item $\tk{k}$ converges, up to subsequences, to a pair $\tpair$ which is a replacement for $\pair$ in $\an=\an(x,r,s)$;
\item For every $y \in \M$, $\tk{k}$ is almost minimizing in every annulus in $\anul{\tilde{r}(y)}{y}$;
\item $\tilde{r}(x)=r(x)$.
\end{itemize}
\end{proposition}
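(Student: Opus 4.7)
The plan is to obtain the replacement by minimising $\fa$ inside $\an$ within the ``homotopy class of $\ck{k}$'' of Definition \ref{def:almost_minimizing}. For each sufficiently large $k$, let $\hpp$ be the class of pairs $\tt{}$ reachable from $\ck{k}$ by a parametrized family $\ft{t}_{t\in\I}$ that coincides with $\ck{k}$ outside $\an$ and satisfies $\fa\ft{t}\le\fa\ck{k}+\e_k/8$ throughout, and set $m_k:=\inf_{\tt{}\in\hpp}\fa\tt{}$. Since masses are uniformly bounded by $\m+1$, varifold compactness produces a weak-$\ast$ limit $\tk{k}\in\cu$ of a $\fa$-minimising sequence in $\hpp$, with $\fa\tk{k}\le m_k\le\fa\ck{k}$ and $\tk{k}=\ck{k}$ outside $\an$.

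The critical step is the regularity of $\tk{k}$ inside $\an$. A concatenation argument—composing any putative $\e_k$-lowering deformation of $\tk{k}$ supported in $\an$ with a near-optimal deformation realising $\tk{k}$ from $\ck{k}$—shows that $\tk{k}$ is itself $\e_k$-almost-minimising in $\an$. Localising to arbitrary balls $B\cc\an$ upgrades this to a genuine local minimisation of $\fa$ in $B$ subject to the trace of $\tk{k}$ on $\partial B$. The regularity theory of \cite{DePMag14,dephilippis2014dimensional} combined with the stable-capillarity curvature estimates of Section \ref{sec6} then identifies $\tk{k}$ inside $\an$ with a smooth stable capillarity pair meeting $\pM$ at angle $\theta$; stationarity in $\an$ is immediate from minimality, and stability follows from the second-variation inequality for minimisers. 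This is the main obstacle: one has to promote almost-minimality to a bona fide local minimisation up to $\pM$ in order to apply the capillarity regularity (rather than the plain area or the free-boundary version), and then invoke Theorem \ref{thm:compactness_stable_capillarity} to pass to a smooth limit as $k\to\infty$.

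For the remaining properties, I would set $\tilde{r}(x):=r(x)$ and, for $y\ne x$, choose $\tilde{r}(y)\le r(y)$ small enough that every annulus in $\anul{\tilde{r}(y)}{y}$ either lies entirely in $\an$ or is entirely disjoint from $\clos{\an}$; in the first case the $\e_k$-a.m.\ property of $\tk{k}$ comes from the local minimisation step above, and in the second it is inherited from $\tk{k}\equiv\ck{k}$ together with Proposition \ref{prop:almost_minimizing_min-max_sequences}. At $y=x$, an annulus in $\anul{r(x)}{x}$ that neither lies in $\an$ nor is disjoint from $\an$ can be enlarged—staying within $\anul{r(x)}{x}$—so as to contain $\an$, and then concatenating any hypothetical $\e_k$-lowering deformation with the defining $\ck{k}\rightsquigarrow\tk{k}$ deformation yields a competitor violating the $\e_k$-a.m.\ property of $\ck{k}$ in the enlarged annulus, a contradiction. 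Hence $\tk{k}$ is an almost-minimising min-max sequence in the sense of Proposition \ref{prop:almost_minimizing_min-max_sequences}. Passing to a further subsequence, $\tk{k}\wto\tpair$; Theorem \ref{thm:pull-tight_procedure} ensures $\tpair$ is stationary for $\fa$, while Theorem \ref{thm:compactness_stable_capillarity} applied inside $\an$ shows $\tpair$ is induced there by a smooth stable capillarity pair, giving all three defining properties of a replacement of $\pair$ in $\an$.
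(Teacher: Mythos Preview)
Your overall architecture matches the paper's: solve the $\frac{\e_k}{8}$-homotopic Plateau problem in $\hpp$, obtain regularity of the minimiser in $\an$ via local minimality and the capillarity regularity theory, extract a diagonal sequence, and pass to the limit using Theorem~\ref{thm:compactness_stable_capillarity}. However, there are two genuine gaps.

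First, you conflate two distinct objects under the symbol $\tk{k}$: the \emph{varifold} limit of a minimising sequence in $\hpp$ (what the paper calls $\pairk{k}$) and the \emph{surface-pair} min-max sequence (the paper's diagonal extraction $\tk{k}=(\bi\om^k_{j(k)},\bb\om^k_{j(k)})$). Almost-minimality is a property of surface pairs compatible with open sets, not of varifold pairs, so the a.m.\ claims must refer to the latter. Relatedly, the regularity theory of \cite{DePMag14,dephilippis2014dimensional} applies to \emph{sets} minimising $\fa$, so you need the intermediate step (the paper's Step~4 of Proposition~\ref{prop:homotopic_plateau_problem}) showing that the varifold limit is actually induced by $\bi\tom^k$ and $\bb\tom^k$; this is not automatic and uses a semicontinuity argument comparing $\fa$ on varifolds and on sets.

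Second, your appeal to Theorem~\ref{thm:pull-tight_procedure} for the stationarity of $\tpair$ is incorrect. That theorem asserts stationarity only for min-max sequences extracted from the \emph{specific} pull-tight-deformed minimizing sequence of sweepouts; your $\tk{k}$ is not a slice of any such sweepout, but a new sequence manufactured by the homotopic Plateau problem. The paper instead proves stationarity of $\tpair$ in $\M$ directly: stationarity holds separately in $\an$ (limit of stable pairs) and in $\M\setminus\clos\an$ (coincides with $\pair$); if global stationarity failed, a partition-of-unity argument would produce a tangential vector field supported in a slightly enlarged annulus $\an'\in\anul{r(x)}{x}$ whose flow decreases $\fa$ along $\tk{k}$ by a fixed amount, contradicting the $\e_k$-a.m.\ property of $\tk{k}$ in $\an'$ for large $k$.

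Finally, the phrase ``localising to arbitrary balls $B\cc\an$ upgrades this to a genuine local minimisation'' hides the main technical content (the paper's Lemma~\ref{lmm:local_balls_competitors}): one must show that any smooth competitor coinciding with the minimising sequence outside a \emph{small} ball lies in $\hpp$, which requires a cone construction together with the monotonicity formula to control the mass in $B_{2\rho}$. This is the step that fixes the scale $\rho_0$ below which local minimality holds, and without it the invocation of \cite{DePMag14} is unjustified.
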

Since the proposition is already proved for points $x \in \interior{\M}$ in \cite[Proposition 2.6]{delellis2009existence}, throughout the section we assume that $x \in \pM$.

The idea to prove the existence of replacements is the following: we fix an annulus $\an = \an(x,r,s) \in \anul{r(x)}{x}$; since every $\ck{k}$ is, for $k$ large enough, $\varepsilon_k$-a.m. in $\an$ for the sequence $\e_k \downarrow 0$ stated by Proposition \ref{prop:almost_minimizing_min-max_sequences}, we fix such a $k$ and consider all homotopic deformations $\ttk{t}{k}_{t \in \I}$ of $\ck{k}$ supported in $\an$ which do not ``pass" through pairs for which $\fa$ is larger than $\fa \ck{k} + \frac{\varepsilon_k}{8}$, that is
\begin{equation}\label{eq:intro_replac_area}
\fa \ttk{t}{k}
\leq
\fa \ck{k} + \frac{\varepsilon_k}{8}
\qquad
\forall t \in \I
\end{equation}
We can consider a minimizing sequence $\ctk{j}{k}_j$ for this class of deformations that converges, up to subsequences, to a pair $\tpairk{k}$, which is stationary for $\fa$ in $\an$ and coincides with $\ck{k}$ in $\M \- \an$, since all deformations are supported in $\an$.
%
The main step is to prove that $\tpairk{k}$ is induced by regular surfaces in $\an$. This is based on the fact that if we consider a sufficiently small ball $B \subset \an$, the sequence $\ctk{j}{k}_j$ is actually minimizing $\fa$ in $B$ \emph{without} the restriction \eqref{eq:intro_replac_area}. This allows us to use the regularity theory for sets which minimize the capillarity functional $\fa$, see \cite[Theorem 1.2]{DePMag14} and \cite[Theorem 1.5]{dephilippis2014dimensional}, to obtain that $\tpairk{k}$ is regular inside $\an$.

Up to subsequences, $\tpairk{k}$ converges as $k \to \infty$, to a pair of varifolds $\tpair$.
$\tpair$ coincides with $\pair$ in $\M \- \an$ and,
By Theorem \ref{thm:compactness_stable_capillarity}, coincides in $\an$ with a regular pair $\tk{}$ which is stationary and stable for $\fa$.
Thus $\tpair$ is the desired replacement.

We fix $x \in \pM$ and an annulus $\an = \an(x,r,s) \in \anul{r(x)}{x}$. These are fixed throughout the remainder of the section, and when we simply write $\an$ we refer to $\an(x,r,s)$.

\subsection{$\frac{\e_k}{8}$-homotopic Plateau problem}

Let $\ck{k}^k$ be the almost minimizing min-max sequence given by Proposition \ref{prop:almost_minimizing_min-max_sequences} and for every $k \in \N$ let $\om_k$ be the open subset of $\M$ which is compatible with $\ck{k}$. We fix $k \in \N$ such that $\ck{k}$ is ${\e_k}$-a.m. in $\an$ and we define the class of pairs $\hpp$.

\begin{defi}\label{def:hpp}
A pair $\c$ belongs to $\hpp$ if there exists a parametrized family of pairs of surfaces $\ct{t}_{t \in [0,1]}$ and a family of open sets $(\ot{t})_{t \in [0,1]}$ compatible with $\ct{t}$ that satisfy the following properties:
\begin{enumerate}
\item \label{pnt:1_hpp} $\ct{0}=\ck{k}$ and $\ct{1}=\c$;
\item \label{pnt:1.5_hpp} $\ct{t} = \ck{k}$ in $\M \- \an$ for every $t \in [0,1]$;
\item \label{pnt:4_hpp} $\fa \ct{t} \leq \fa\ck{k} +\frac{\e_k}{8}$ for every $t \in [0,1]$.
\end{enumerate}
\end{defi}
Roughly speaking, $\c$ belongs to $\hpp$ if it can be obtained deforming $\ck{k}$ in $\an$ without passing through a pair for which $\fa$ is larger than $\fa\ck{k} +\frac{\e_k}{8}$. We now consider the minimizing problem for $\fa$ in the class $\hpp$.

\begin{proposition}[$\frac{\e_k}{8}$-Homotopic Plateau problem]\label{prop:homotopic_plateau_problem}
Let $\ctk{j}{k}_j$ be a minimizing sequence of pairs in $\hpp$, that is
\begin{equation}
\lim_{j \to \infty} \fa \ctk{j}{k}
=
\inf \Big\{ \fa \c \mid \c \in \hpp \Big\}
\end{equation}
and let $\om_j^k$ be open sets compatible with $\ctk{j}{k}$.
Then, up to subsequences, the following conclusions hold:
\begin{enumerate}
\item $\ctk{j}{k}$ converge, as $j \to \infty$, to a pair $\pairk{k}$ which is stationary and stable for $\fa$ in $\an$;
\item $\pairk{k}$ is induced, in $\an$, by a pair of surfaces $\tk{k}$ which are smooth and minimize $\fa$ on small balls contained in $\an$;
\item $\om_j^k$ converge, as $j \to \infty$, to a set $\tok{k}$ of finite perimeter which is compatible with $\tk{k}$ in $\an$.
\end{enumerate}
\end{proposition}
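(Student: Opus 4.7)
Property \ref{pnt:4_hpp} in Definition \ref{def:hpp} gives the uniform bounds $\norm{V_j^k}(\M) + a\norm{W_j^k}(\pM) \leq \fa\ck{k} + \e_k/8$ and $\per(\om_j^k;\M) \leq C$. Up to subsequences, $(V_j^k,W_j^k) \wto \pairk{k} \in \cu$ in the varifold sense and $\om_j^k \to \tok{k}$ in $L^1$, with $\tok{k}$ of finite perimeter; since every $\ctk{j}{k}$ agrees with $\ck{k}$ outside $\an$, so do $\pairk{k}$ and $\tok{k}$. By lower semicontinuity of $\fa$, $\pairk{k}$ realizes $\mu := \inf\{\fa\c : \c \in \hpp\}$. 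The remaining steps promote this weak convergence to smooth convergence on $\an$ via an unconstrained local minimality property to which the regularity theory of \cite{DePMag14,dephilippis2014dimensional} applies, and obtain stationarity and stability from perturbations that stay in $\hpp$ thanks to the built-in slack $\e_k/8$.

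\textbf{Local unconstrained minimality.} The crucial claim is that for every $y \in \an$ there exists $\rho_0 = \rho_0(y)>0$ such that, for $\rho < \rho_0$ and $j$ sufficiently large, $\ctk{j}{k}$ minimizes $\fa$ among all pairs $(\Sigma',\Gamma')$ compatible with some open set $\om'$ and agreeing with $\ctk{j}{k}$ outside $B_\rho(y) \cc \an$. Indeed, if a strictly better competitor $(\Sigma',\Gamma')$ with $\fa(\Sigma',\Gamma') \leq \fa\ctk{j}{k} - \delta$ existed, a homotopy $\ft{t}_{t \in [0,1]}$ from $\ctk{j}{k}$ to $(\Sigma',\Gamma')$ supported in $B_\rho(y)$ can be built via a radial interpolation, with the estimate
\[
\fa\ft{t} \leq \max\{\fa\ctk{j}{k},\, \fa(\Sigma',\Gamma')\} + C(\M)\rho^2.
\]
Choosing $\rho_0$ so that $C(\M)\rho_0^2 < \e_k/16$ and $j$ so large that $\fa\ctk{j}{k} \leq \mu + \e_k/16$, the concatenation of this deformation with the homotopy witnessing $\ctk{j}{k} \in \hpp$ lies in $\hpp$ and produces a pair with $\fa$-value strictly below $\mu$, contradicting the definition of $\mu$.

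\textbf{Regularity.} By lower semicontinuity, $\tok{k}$ inherits the local $\fa$-minimality on every small ball $B_\rho(y) \subset \an$. The regularity theory for capillary droplets in convex containers of \cite{DePMag14,dephilippis2014dimensional} then yields that $\bi\tok{k} \cap \an$ and $\bb\tok{k}\cap\an$ are $C^{2,\alpha}$ surfaces $\tilde{\s}^k, \tilde{\g}^k$ meeting $\pM$ with contact angle $\theta$; these form the pair $\tk{k}$ and induce $V^k, W^k$ on $\an$. A graphical decomposition combined with elliptic estimates upgrades the weak convergence $(V_j^k,W_j^k) \wto (V^k,W^k)$ to smooth convergence inside $\an$.

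\textbf{Stationarity and stability.} For any $X \in \Xt$ compactly supported in $\an$, the flow $\psi_s$ of $X$ perturbs $\pairk{k}$ to $((\psi_s)_\sharp V^k,(\psi_s)_\sharp W^k)$; for $|s|$ small enough, the slack $\e_k/8$ allows concatenating the homotopy realizing $\pairk{k} \in \hpp$ with $\psi_s$ to stay in $\hpp$. Minimality of $\pairk{k}$ then yields $\dfa\pairk{k}[X] = 0$ and $\dtfa\pairk{k}[X] \geq 0$, i.e.\ stationarity and stability of $\pairk{k}$ in $\an$. The main obstacle is the interpolation used in the second step: producing a compatible smooth family $\ft{t}$ supported in $B_\rho(y)$ whose intermediate $\fa$-values stay within the $\hpp$ budget, while respecting the boundary conditions at $\pM$ and the evolution of an accompanying family of open sets, is the delicate technical point of the argument.
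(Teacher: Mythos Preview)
Your outline follows the paper's strategy in spirit, but there are two genuine gaps and one ordering issue that break the argument as written.

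\textbf{The local minimality claim is for the wrong object.} You assert that for $j$ large the sequence element $\ctk{j}{k}$ itself minimizes $\fa$ in $B_\rho(y)$. This is false in general: if $\ctk{j}{k}$ has $\fa\ctk{j}{k}=\mu+\delta_j$ with $\delta_j\downarrow 0$, any local modification that lowers $\fa$ by less than $\delta_j$ gives a competitor still in $\hpp$ with $\fa\ge\mu$, so no contradiction arises. What the interpolation argument actually proves is the paper's Lemma \ref{lmm:local_balls_competitors}: any smooth pair coinciding with $\ctk{j}{k}$ outside $B_\rho(y)$ and with smaller $\fa$ lies in $\hpp$. From this one must deduce, by a separate cut--and--paste plus mollification argument (the paper's Lemma \ref{lmm:minimality_tom_boundary}), that the \emph{limit} $\tok{k}$ is a local $\fa$-minimizer. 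Your sentence ``by lower semicontinuity, $\tok{k}$ inherits the local $\fa$-minimality'' hides exactly this step, which is not automatic: one has to glue a competitor for $\tok{k}$ to $\om_j^k$ across a good sphere, smooth the result so that it is admissible, and control the errors.

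\textbf{Missing compatibility step.} You write that the regular surfaces $\tilde\s^k,\tilde\g^k$ ``induce $V^k,W^k$ on $\an$'', but this is not immediate: the varifold limit $(V^k,W^k)$ of $\ctk{j}{k}$ could a priori carry extra mass beyond $\partial\tok{k}$. The paper spends Step~4 showing $\hn(\s_j)\to\hn(\bi\tok{k})$ and $\hn(\g_j)\to\hn(\bb\tok{k})$; the argument uses that equality in $\fa(\tok{k})=\fa(V^k,W^k)$ together with lower semicontinuity of perimeter forces both defects to vanish. Without this, you cannot conclude that $(V^k,W^k)$ is induced by smooth surfaces in $\an$.

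\textbf{Ordering.} Your estimate $\fa\ft{t}\le\max\{\cdots\}+C(\M)\rho^2$ for the interpolation relies on a bound $\hn(\s_j\cap B_{2\rho})+a\hn(\g_j\cap B_{2\rho})\le c\rho^2$, which the paper obtains from the monotonicity formula for the \emph{stationary} limit $(V^k,W^k)$ combined with varifold convergence. You therefore need stationarity first, as in the paper's Step~1; your argument places it last and moreover invokes ``$\pairk{k}\in\hpp$'', which is not established since $\pairk{k}$ is only a varifold limit. The paper's stationarity proof avoids this by testing the first variation directly on the sequence $\ctk{j}{k}$ and noting the flow decreases $\fa$, so the path automatically stays in $\hpp$.
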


\subsection{Proof of proposition \ref{prop:homotopic_plateau_problem}}

The proof is quite long and we divide it in several steps. Since $k$ is fixed, throughout the proof we drop superscripts $k$, thus we write $\c$ for $\ck{k}$, $\ct{j}$ for $\ctk{j}{k}$, $\om_j$ for $\om_j^k$, $\pair$ for $\pairk{k}$, $\tc$ for $\tk{k}$ and $\tom$ for $\tok{k}$.

First of all we notice that of course there exists a subsequence, not relabeled, $\ct{j}$ which converges to a pair $\pair$ in the sense of varifolds, since masses are bounded; moreover the open sets $\om_j$ compatible with $\ct{j}$ converge to $\tom$, which is of finite perimeter since the masses of the pairs $\ct{j}$ are uniformly bounded. We then have to prove that these objects have the properties stated in the proposition.

\subsubsection{Step 1: $\pairk{k}$ is stationary and stable for $\fa$ in $\an$}
We first prove that $\pair$ is stationary in $\an$ for $\fa$; indeed, let us assume by contradiction the existence of a vector field $X \in \Xt$ with compact support in $\an$ such that
\begin{equation}
\dfa \pair [X] = - \beta <0.
\end{equation}
Since $\ct{j}$ converge to $\pair$, by continuity of first variation there exists $J \in \N$ such that
\begin{equation}
\dfa \ct{j}[X] \leq - \frac{\beta}{2}
\qquad
\forall j \geq J.
\end{equation}
Since the first variation with respect to $X$ is continuous with respect to the varifold convergence, there exists $T>0$ such that
\begin{equation}
\dfa (\psi_t)_\sharp \ct{j}[X]
\leq
-\frac{\beta}{4}
\qquad
\forall t \in [0,T],
\forall j \geq J.
\end{equation}
Thus, for every $j \geq J$ we have
\begin{equation}
\fa (\psi_T)_\sharp \ct{j}
\leq
\fa \ct{j} + \int_0^T \dfa (\psi_t)_\sharp \ct{j}[X] \dif t
\leq
\fa \ct{j}
- \frac{T \beta}{4}
\end{equation}
which contradicts the minimality of the sequence $\ct{j}_j$ in $\hpp$.

By a similar argument, $\pair$ is also stable for $\fa$ in $\an$.

\subsubsection{Step 2: In small balls pairs with small $\fa$ are in $\hpp$}

We show that, if we consider a sufficiently small ball $B$ in $\an$ and a sufficiently large $j$, if $\tc$ coincides with $\ct{j}$ outside of $B$ and it is such that $\fa \tc \leq \fa \ct{j}$, then $\tc$ belongs to $\hpp$.

\begin{lemma}\label{lmm:local_balls_competitors}
Let us assume $y \in \an$; there exists $j_0 \in \N$ and $\rho_0>0$ such that $B_{\rho_0}(y) \subset \an$ with the following property: if $j> j_0$, $\rho< \rho_0$ and if $\tc$ is a smooth pair of surfaces such that
\begin{enumerate}
\item $\tc \- B_\rho(y) = \ct{j} \- B_\rho(y)$;
\item \label{pnt:3_local_balls_competitor} $\fa \tc < \fa \ct{j}$,
\end{enumerate}
then $\tc \in \hpp$.
\end{lemma}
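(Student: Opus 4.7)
The strategy is to show $\tc \in \hpp$ by concatenating two homotopies: the already-existing homotopy from $\ck{k}$ to $\ct{j}$ (guaranteed since $\ct{j} \in \hpp$) with a new homotopy from $\ct{j}$ to $\tc$, supported inside $B_\rho(y) \subset \an$. The second homotopy must keep $\fa$ bounded by $\fa\ck{k} + \e_k/8$ throughout, and will be built via a cone-collapse-to-$y$-and-reopen construction. The numbers $\rho_0$ and $j_0$ will be chosen at the end using a uniform mass estimate near $y$.

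\textbf{Step 1: Mass bound near $y$.} The limit pair $\pair$ is stationary for $\fa$ in $\an$ (by Step~1 of the enclosing proof of Proposition~\ref{prop:homotopic_plateau_problem}), so Proposition~\ref{prop:monotonicity_formula_infty} (applied with $H \equiv 0$) gives a constant $C_0 = C_0(\M,\pair)$ such that
\[
(\nv + a\nw)(B_\rho(y)) \leq C_0 \rho^2 \qquad \text{for all } \rho \leq \rho_0^{(1)},
\]
with $\rho_0^{(1)}$ small enough that $B_{\rho_0^{(1)}}(y) \subset \an$. By upper semicontinuity of mass under the weak* convergence $\ct{j} \wto \pair$, one can fix $\rho_0 \leq \rho_0^{(1)}$ and $j_0 \in \N$ such that $\fa(\ct{j})|_{\bar B_\rho(y)} \leq 2C_0 \rho^2$ for all $j \geq j_0$ and $\rho \leq \rho_0$. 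Since $\fa\tc < \fa\ct{j}$ and $\tc$ agrees with $\ct{j}$ on $\M \setminus B_\rho(y)$, the same bound holds for $\tc$ in $B_\rho(y)$.

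\textbf{Step 2: Cone homotopy and its energy.} Work in a local $\breg$-chart straightening $\pM$ near $y$ to the hyperplane $\{x_3=0\}$, so that cones from $y$ can be formed respecting the interior/boundary decomposition. Because $\ct{j}$ and $\tc$ coincide on $\partial B_\rho(y)$, they share a common full-cone configuration
\[
\alpha := \ct{j}\big|_{\M \setminus B_\rho(y)} \cup \mathrm{Cone}\bigl(y,\, \ct{j} \cap \partial B_\rho(y)\bigr).
\]
Define a family in two pieces: for $s \in [0, 1/2]$, replace $\ct{j} \cap B_{(1-2s)\rho}(y)$ by $\mathrm{Cone}(y, \ct{j} \cap \partial B_{(1-2s)\rho}(y))$, ending at $\alpha$; for $s \in [1/2, 1]$, run the analogous construction for $\tc$ in reverse. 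After mollifying near the apex $y$ and near $s=1/2$, this fits Definition~\ref{def:paramfamily}. The intermediate $\fa$ at time $s$ equals $\fa(\ct{j})|_{\M \setminus B_{(1-2s)\rho}(y)}$ plus the cone area, which is bounded by $\tfrac{(1-2s)\rho}{2}\haus{1}(\ct{j} \cap \partial B_{(1-2s)\rho}(y))$. Using coarea together with Step~1, one has $\int_0^\rho \haus{1}(\ct{j}\cap \partial B_r(y))\dif r \leq 2C_0\rho^2$; after reparameterizing $s$ by a Sard-regular diffeomorphism adapted to the radial slice function, the pointwise bound
\[
\fa(\text{pair at time } s) \leq \fa\ct{j} + C_1\rho^2
\]
holds throughout (i), with the same estimate for (ii) using $\fa\tc < \fa\ct{j}$ and Step~1 applied to $\tc$.

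\textbf{Step 3: Concatenation and main obstacle.} Since $(\ct{j})_j$ is a minimizing sequence for $\inf_{\hpp}\fa$ and $\ck{k} \in \hpp$ gives $\inf_{\hpp}\fa \leq \fa\ck{k}$, we have $\fa\ct{j} \leq \fa\ck{k} + \delta_j$ with $\delta_j \downarrow 0$. Enlarging $j_0$ and shrinking $\rho_0$ so that $\delta_{j_0} + C_1\rho_0^2 < \e_k/8$, the second homotopy satisfies $\fa \leq \fa\ck{k} + \e_k/8$; concatenating with the homotopy witnessing $\ct{j} \in \hpp$ produces a sweepout confirming $\tc \in \hpp$. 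The main technical obstacle is executing the cone construction at the boundary point $y \in \pM$: the cone must simultaneously interpolate the interior surface part (in $\intm$) and the boundary part (on $\pM$) along their common curve $\dbo$, and must remain compatible with some open set $\tom_s$ at every time. Straightening $\pM$ in the local chart and coning within the resulting half-space handles this, but the apex singularity at $y$ must be smoothed without spoiling the $O(\rho^2)$ area estimate. A secondary subtlety is that a naive linear time parameterization can produce spikes in $\haus{1}(\ct{j} \cap \partial B_r(y))$ at bad radii, so the Sard-regular reparameterization in Step~2 is essential to convert the integrated coarea bound into a pointwise energy bound.
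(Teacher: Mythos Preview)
Your overall plan---concatenate the existing homotopy from $\ck{k}$ to $\ct{j}$ with a cone-based homotopy from $\ct{j}$ to $\tc$, and control the energy along the latter via the monotonicity bound of Step~1---is exactly right and matches the paper. Step~1 is correct.

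The gap is in Step~2. Your homotopy at time $s$ is ``original outside $B_{r(s)}(y)$, cone over the slice inside $B_{r(s)}(y)$'' with $r(s)=(1-2s)\rho$. The energy excess at time $s$ is then at most $\tfrac{r(s)}{2}\haus{1}\bigl(\ct{j}\cap\partial B_{r(s)}(y)\bigr)$. For the homotopy to stay below $\fa\ck{k}+\e_k/8$ you need this bounded by $C\rho^2$ for \emph{every} $r\in(0,\rho]$, since your family passes through every radius. But coarea only gives $\int_0^\rho \haus{1}(\text{slice at }r)\,dr\le C\rho^2$, an integrated bound; the pointwise quantity $r\,\haus{1}(\text{slice at }r)$ can blow up at individual radii (the $\ct{j}$ are not stationary, so no monotonicity is available for them). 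A time reparameterization cannot help: it changes the labelling of the surfaces in the family but not the set of surfaces itself, so every bad radius is still hit at some time $s$.

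The paper's construction avoids this by never varying the cone radius. One first uses Chebyshev on the coarea integral to select a \emph{single} radius $\tau\in(\rho,2\rho)$ with $\haus{1}(\s_j\cap\partial B_\tau)\le 3\rho^{-1}\hn(\s_j\cap\an(y,\rho,2\rho))$ (and similarly for $\g_j$), at which the slice is also transverse. The fixed cone over this one good slice then has area $\le 6\,\hn(\s_j\cap B_{2\rho})$. The homotopy itself is built by \emph{dilation}: at time $t\in[0,\tfrac12]$ the surface equals $\ct{j}$ outside $B_\tau$, the fixed cone $(C,K)$ in the collar $\an(y,(1-2t)\tau,\tau)$, and the $(1-2t)$-dilate of $\ct{j}\cap B_\tau$ inside $B_{(1-2t)\tau}$; for $t\in[\tfrac12,1]$ the same with $\tc$ in place of $\ct{j}$. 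The inner dilated piece has area $\le(1-2t)^2\hn(\s_j\cap B_\tau)$, so the total excess is uniformly $\le 7\bigl[\hn(\s_j\cap B_{2\rho})+a\hn(\g_j\cap B_{2\rho})\bigr]$, which your Step~1 bounds by $C\rho_0^2$. Choosing $\rho_0$ small enough then finishes as in your Step~3.
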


\begin{proof}
Since the case $y \in \interior{\M}$ is covered by \cite[Lemma 4.1]{delellis2009existence}, we fix $y \in \an \cap \pM$.
Up to translating, we can assume that $y$ coincides with the origin $0$.

We first assume that there exists $R>0$ such that $B_R \cap \pM$ is flat, that is, up to rotating the coordinates, we assume that $B_R \cap \pM = \{(z_1,z_2,z_3) \in B_R \mid z_{3}\geq 0\}$.

\begin{enumerate}
\item
We begin by considering $j_0 \in \N$ and $\rho_0>0$ such that $4\rho_0 \leq R$ and $B_{4 \rho_0}\subset \an$. The exact values of $j_0$ and $\rho_0$ will be chosen at the end of the proof.

We choose $\rho< \rho_0$, $j>j_0$; since the sets of singularities $S_j, G_j$ of $\ct{j}$ are finite, by Sard's Lemma there exists $\tau \in (\rho,2\rho)$ such that each surface of the pair $\ct{j}$ intersects $\de B_\tau$ transversally and smoothly.

We now choose $\delta>0$ so that $2\delta < \min \{\tau-\rho, 2\rho -\tau\}$, and we consider a monotone non-decreasing smooth function $\gamma: [0,+\infty) \to [0,+\infty)$ such that
\begin{itemize}
\item $\gamma(t) = t$ if $t \in [0,\tau- 2\delta] \cup [\tau+2\delta, +\infty)$;
\item $\gamma(t) = \tau$ if $t \in [\tau-\delta, \tau+\delta]$.
\end{itemize}

We recall, by \eqref{eq:dilation_map}, that
\begin{equation}
\dil{0,\kappa}: z \in \R^{3} \longmapsto \frac{1}{\kappa} z \in \R^{3},
\end{equation}
and, for every $t \in [0,1]$ and every $\lambda \geq 0$, we define $\ct{j,t}$ as follows:
\begin{equation}
\ct{j,t} \cap \partial B_\lambda = \dil{0,[t\gamma(\lambda) + (1-t)\lambda]/\lambda} \Bigl(\ct{j} \cap \partial B_{t\gamma(\lambda) + (1-t)\lambda} \Bigr).
\end{equation}
Roughly speaking, we are stretching $\ct{j}$ in a neighbourhood of $\de B_\tau$ and deforming it in a cone.

Whatever is $\e>0$, if $\delta$ is chosen sufficiently small, we have that
\begin{gather}
\fa \ct{j,t} \leq \fa \ct{j} + \varepsilon
\qquad
\forall t \in [0,1].
\end{gather}
Since $\tc = \ct{j}$ outside $B_\rho$, the same modification can be done on $\tc$ with the same estimates on $\fa$.
This allows us to assume, without loss of generality, that $\ct{j}$ (and thus $\tc$, since it coincides with $\ct{j}$ outside $B_\rho$) is a cone in $\an(0,\tau-\delta,\tau+\delta)$.

\item
We now define $C, K$ as the cones with base $\s_j \cap \de B_\tau$ and $\g_j \cap \de B_\tau$ and vertex $0$, that is:
\begin{equation}
\begin{gathered}
C \cap \partial B_\lambda
=
\dil{0,\t/\lambda} \Bigl( \s_j \cap \de B_\t \Bigr)
\qquad
\forall \lambda > 0,
\\
K \cap \partial B_\lambda
=
\dil{0,\t/\lambda} \Bigl( \g_j \cap \de B_\t \Bigr)
\qquad
\forall \lambda > 0.
\end{gathered}
\end{equation}

We can now construct the homotopy beetween $\ct{j}$ and $\tc$, that is the family of pairs $\{\ct{j,t}\}_{t \in [0,1]}$ such that $\ct{j,0}= \ct{j}$ and $\ct{j,1} = \tc$; we define it in the following way:
\begin{itemize}
\item $\ct{j,t} \- B_\t = \ct{j} \- B_\t$ for every $t \in [0,1]$;
\item $\ct{j,t} \cap \an(0,|1-2t|\t,\t) = (C,K) \cap \an(0,|1-2t|\t,\t)$ for every $t \in [0,1]$;
\item $\ct{j,t} \cap B_{(1-2t)\t} = \dil{0,1/(1-2t)}\Bigl(\ct{j} \cap B_\t \Bigr)$ for $t \in [0,\frac{1}{2}]$;
\item $\ct{j,t} \cap B_{(2t-1)\t} = \dil{0,1/(2t-1)}\Bigl(\tc \cap B_\t \Bigr)$ for $t \in [\frac{1}{2},1]$.
\end{itemize}
The family of open sets compatible with $\ct{j,t}$ is defined in the same way.

It is clear that this family of pair of surfaces satisfies conditions \ref{pnt:1_hpp} and \ref{pnt:1.5_hpp} of definition \ref{def:hpp}; we have only to check that condition \ref{pnt:4_hpp} is satisfied.

\item
By coarea formula, we have
\begin{equation}
\begin{gathered}
\hn(\s_j \cap \an(0,\rho,2\rho))
=
\int_\rho^{2\rho} \haus{1} \big(\s_j \cap \partial B_\lambda \big) \dif \lambda,
\\
\hn(\g_j \cap \an(0,\rho,2\rho))
=
\int_\rho^{2\rho} \haus{1} \big(\g_j \cap \partial B_\lambda \big) \dif \lambda.
\end{gathered}
\end{equation}

By Chebyshev inequality we have
\begin{equation}
\begin{gathered}
\haus{1}\Bigl(
\Bigl\{
\lambda \in [\rho,2\rho] \mid
\haus{1}(\s_j \cap \partial B_\lambda)
\geq
\frac{3 \hn(\s_j \cap \an(0,\rho,2\rho))}{\rho}
\Bigr\}
\Bigr)
\leq
\frac{\rho}{3},
\\
\haus{1}\Bigl(
\Bigl\{
\lambda \in [\rho,2\rho] \mid
\haus{1}(\g_j \cap \partial B_\lambda)
\geq
\frac{3 \hn(\g_j \cap \an(0,\rho,2\rho))}{\rho}
\Bigr\}
\Bigr)
\leq
\frac{\rho}{3}.
\end{gathered}
\end{equation}
Since by Sard's Lemma the set of $\lambda \in [\rho,2\rho]$ such that $\s_j$  and $\g_j$ intersect $\de B_\lambda$ transversally has full measure, we can choose $\t \in [\rho,2\rho]$ such that $\s_j \cap \de B_\t$ and $\g_j \cap \de B_\t$ are smooth $(1)$-manifolds and
\begin{equation}
\begin{gathered}
\haus{1}(\s_j \cap \de B_\tau)
\leq
\frac{3 \hn(\s_j \cap \an(0,\rho,2\rho))}{\rho},
\\
\haus{1}(\g_j \cap \de B_\tau)
\leq
\frac{3 \hn(\g_j \cap \an(0,\rho,2\rho))}{\rho}.
\end{gathered}
\end{equation}
Thus we have an estimate for the area of $C \cap \an(y,|1-2t|\t,\t)$ and $K \cap \an(y,|1-2t|\t,\t)$:
\begin{equation} \label{eq:estimate_1_local_balls}
\begin{split}
\hn(C \cap \an(y,|1-2t|\t,\t))
&=
\int_{|1-2t|\t}^\t \haus{1}(K \cap \de B_\lambda)  \dif \lambda
\\
&=
\haus{1}(\s_j \cap \partial B_\t)
\int_{|1-2t|\t}^\t \Bigl(\frac{\lambda}{\t}\Bigr) \dif \lambda
\\
&\leq
6 \hn(\s_j \cap B_{2\rho}).
\end{split}
\end{equation}
In a similar way we get
\begin{equation}
\hn(K \cap \an(y,|1-2t|\t,\t))
\leq
6 \hn(\g_j \cap B_{2\rho}).
\end{equation}
If $t \in [0,\frac{1}{2}]$ we have
\begin{equation}\label{eq:estimate_2_local_balls}
\begin{gathered}
\hn(\s_{j,t} \cap B(0,|1-2t|\tau))
=
|1-2t|^2 \hn(\s_j \cap B_\t)
\leq
\hn(\s_j \cap B_{2\rho})
\\
\hn(\g_{j,t} \cap B(0,|1-2t|\tau))
=
|1-2t|^2 \hn(\g_j \cap B_\t)
\leq
\hn(\g_j \cap B_{2\rho})
\end{gathered}
\end{equation}
If $t \in [\frac{1}{2},1]$ we similarly have
\begin{equation}
\begin{gathered}
\hn(\s_{j,t} \cap B(0,|1-2t|\tau))
=
|1-2t|^2 \hn(\ts{} \cap B_\t)
\leq
\hn(\ts{} \cap B_{2\rho})
\\
\hn(\g_{j,t} \cap B(0,|1-2t|\tau))
=
|1-2t|^2 \hn(\tg{} \cap B_\t)
\leq
\hn(\tg{} \cap B_{2\rho})
\end{gathered}
\end{equation}
%

\item
Gathering the estimates \eqref{eq:estimate_1_local_balls} and \eqref{eq:estimate_2_local_balls}, we obtain
\begin{equation}\label{eq:estimate_k-homotopic}
\hn(\s_{j,t} \cap B_\t) + a \hn(\g_{j,t} \cap B_\t)
\leq
7 \Big[ \hn(\s_{j} \cap B_{2\rho}) + a \hn(\g_{j} \cap B_{2\rho})
\Big]
\end{equation}
We now recall that $\ct{j}$ converges to $\pair$ ($=\pairk{k}$) which is stationary in $\an$. By Proposition \ref{prop:monotonicity_formula_infty}, we have that, for a suitable constant $c>0$
\begin{equation}
\norm{V}(B_\lambda) + a \norm{W}(B_\lambda)
\leq
c \lambda^2,
\end{equation}
By varifold convergence $\ct{j} \to \pair$, up to choosing the radius $\rho_0$ such that $\norm{V}(\de B_{2\rho_0}) + a \norm{W}(\de B_{2\rho_0}) = 0$, there exists $j_0 \in \N$ such that for all $j >j_0$ and $\rho< \rho_0$
\begin{equation}
\begin{split}
\hn(\s_j \cap B_{2\rho}) + a \hn(\g_j \cap B_{2\rho})
&\leq
\hn(\s_j \cap B_{2\rho_0}) + a \hn(\g_j \cap B_{2\rho_0})
\\
& \leq
2 \Big[ \norm{V}(B_{2\rho_0}) + a \norm{W}(B_{2\rho_0}) \Big]
\\
&\leq
2^{3} c \rho_0^2.
\end{split}
\end{equation}

If $\rho_0$ is chosen such that $7 \cdot 2^{3} c \rho_0^2 < \frac{\e_k}{8}$, we obtain
\begin{equation}
\fa \ct{j,t}
\leq
\fa \ct{j} + \frac{\e_k}{8}
\qquad
\forall t \in \I;
\end{equation}
Thus, with these choices of $j_0$ and $\rho_0$, $\tc \in \hpp$.

\item
If $\pM$ is not flat in a neighborhood of $y$, there exists a ball $B:=B_R(y)$, a neighbourhood $W$ of the origin and a diffeomorphism $\Phi: B \to W$ such that
\begin{equation}
\Phi(y)=0
\qquad
\Phi(B \cap \M)
=
\{(z_1,\dots,z_{3}) \in W \mid z_{3}\geq 0\}.
\end{equation}
By regularity of $\Phi$, there exists a constant $b>0$ such that
\begin{gather}
\frac{1}{b}\haus{k}(A)
\leq
\haus{k}(\Phi(A))
\leq
b \haus{k}(A)
\qquad
\forall A \subset B_R(y),\,\, k\in\{1,\dots,3\}
\label{eq:measure_estimate_boundary}
\\
\frac{1}{b}|z-w|
\leq
|\Phi(z) - \Phi(w)|
\leq
b |z-w|
\qquad
\forall z,w \in B_R(y).
\label{eq:lenght_estimate_boundary}
\end{gather}
We can repeat all the arguments made above and, up to adjusting the constants, obtain the thesis.
\end{enumerate}
\end{proof}

\subsubsection{Step 3: $\tok{k}$ locally minimizes $\fa$ in $\an$}
\begin{lemma}\label{lmm:minimality_tom_boundary}
Let us assume $y \in \M$ and $0<\rho <\rho_0$, where $\rho_0$ is given by Lemma \ref{lmm:local_balls_competitors}; then, for every open set $\Xi$ of finite perimeter such that $\Xi \- B_{\rho/2}(y)= \tom \- B_{\rho/2}(y)$, we have
\begin{equation}
\fa (\tom)
\leq
\fa(\Xi).
\end{equation}
\end{lemma}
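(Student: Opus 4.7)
The plan is a contradiction argument. Suppose there exists an open set of finite perimeter $\Xi$ with $\Xi\setminus B_{\rho/2}(y)=\tom\setminus B_{\rho/2}(y)$ and $\fa(\Xi)\le \fa(\tom)-3\delta$ for some $\delta>0$. I will construct, for each sufficiently large $j$, a smooth pair $\tt{j}\in\opm$ coinciding with $\ct{j}$ outside $B_\rho(y)$ and satisfying $\fa\tt{j}<\fa\ct{j}$. Lemma \ref{lmm:local_balls_competitors} then places $\tt{j}$ in $\hpp$, contradicting the minimality of $\ct{j}$ in $\hpp$ since $\fa\tt{j}<\fa\ct{j}\to\inf_{\hpp}\fa$.

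The construction proceeds as follows. First, approximate $\Xi$ by a set $\Xi'$ obtained from $\Xi$ by smoothing inside $B_{\rho/2}(y)$ only, so that $\Xi'=\tom$ on $\M\setminus B_{\rho/2}(y)$ and $\fa(\Xi')\le\fa(\Xi)+\delta$. Next, use the $L^1$-convergence $\om_j\to\tom$ and the coarea formula for $x\mapsto|x-y|$:
\begin{equation}
\int_{\rho/2}^{\rho}\hn\bigl(\partial B_\mu(y)\cap\intm\cap(\Xi'\triangle\om_j)\bigr)\,d\mu=\bigl|\intm\cap\an(y,\rho/2,\rho)\cap(\om_j\triangle\tom)\bigr|\longrightarrow 0.
\end{equation}
Since at most countably many $\mu\in(\rho/2,\rho)$ can satisfy $(\nv+\nw)(\partial B_\mu(y))>0$, a selection argument combined with Sard's lemma yields $\lambda_j\in(\rho/2,\rho)$ with $\lambda_j\to \lambda^*\in(\rho/2,\rho)$, $(\nv+\nw)(\partial B_{\lambda^*}(y))=0$, $\hn(\partial B_{\lambda_j}(y)\cap\intm\cap(\Xi'\triangle\om_j))\to 0$, and $\partial B_{\lambda_j}(y)$ transversal to the relevant boundaries. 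Define $\Xi_j:=(\Xi'\cap B_{\lambda_j}(y))\cup(\om_j\setminus B_{\lambda_j}(y))$ and apply a local smoothing near $\partial B_{\lambda_j}(y)$ and inside $B_\rho(y)$ with cumulative $\fa$-cost $o(1)$ to obtain a smooth pair $\tt{j}\in\opm$ coinciding with $\ct{j}$ outside $B_\rho(y)$.

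For the estimate, note that $\partial B_{\lambda_j}(y)\cap\pM$ is one-dimensional and hence $\hn$-negligible, so the boundary part $\bb$ picks up no interface contribution, and splitting $\fa$ along $\partial B_{\lambda_j}(y)$ yields
\begin{equation}
\fa\tt{j}\le\fa(\Xi';B_{\lambda_j}(y))+\fa(\om_j;\M\setminus B_{\lambda_j}(y))+o(1),
\end{equation}
writing $\fa(\om;A):=\hn(\bi\om\cap A)+a\hn(\bb\om\cap A)$. Since $\Xi'=\tom$ on $\an(y,\rho/2,\lambda_j)$ and $\fa(\Xi')\le\fa(\Xi)+\delta$,
\begin{equation}
\fa(\Xi';B_{\lambda_j}(y))\le\fa(\Xi;B_{\rho/2}(y))+\fa(\tom;\an(y,\rho/2,\lambda_j))+\delta.
\end{equation}
Lower semicontinuity of the capillarity functional on open sets combined with $\norm{V_j}\wto\nv$, $\norm{W_j}\wto\nw$ and the choice of $\lambda^*$ gives $\fa(\tom;B_{\lambda^*}(y))\le\fa(\pair;B_{\lambda^*}(y))=\lim_j\fa(\om_j;B_{\lambda_j}(y))$, so $\limsup_j\fa(\om_j;\M\setminus B_{\lambda_j}(y))\le \fa(\pair)-\fa(\tom;B_{\lambda^*}(y))$. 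Since $\fa(\pair)=\lim_j\fa\ct{j}\ge\fa(\tom)$ and $\fa(\Xi;B_{\rho/2}(y))-\fa(\tom;B_{\rho/2}(y))=\fa(\Xi)-\fa(\tom)\le-3\delta$, the estimate consolidates to
\begin{equation}
\limsup_{j\to\infty}\fa\tt{j}\le\fa(\Xi)+\delta\le\fa(\tom)-2\delta<\fa(\tom)\le\lim_{j\to\infty}\fa\ct{j},
\end{equation}
so $\fa\tt{j}<\fa\ct{j}$ for $j$ large, which is the sought contradiction.

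The principal obstacle is the simultaneous calibration of the slicing radius $\lambda_j$: the interior interface must vanish via coarea and Chebyshev, the atoms of $\nv$ and $\nw$ on concentric spheres must be avoided so that the varifold masses split continuously in the limit, and the glued object must be smoothable into a legitimate competitor for Lemma \ref{lmm:local_balls_competitors}. The interplay between $L^1$-convergence of the sets $\om_j$, weak-$*$ convergence of the associated varifolds $\ct{j}\to\pair$, and the lower semicontinuity of $\fa$ is what makes the final estimate close.
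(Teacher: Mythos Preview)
Your approach is essentially the paper's: argue by contradiction, glue the competitor $\Xi$ to $\om_j$ across a well-chosen sphere in $\an(y,\rho/2,\rho)$, control the interface term via coarea and $L^1$-convergence, smooth into a pair admissible for Lemma~\ref{lmm:local_balls_competitors}, and contradict the minimality of $\ct{j}$ in $\hpp$. Two points deserve correction.

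First, your final displayed chain is not quite right. From your own intermediate bounds one obtains
\[
\limsup_{j\to\infty} \fa\tt{j} \;\le\; \bigl(\fa(\Xi)-\fa(\tom)\bigr) + \delta + \fa(\pair),
\]
which is \emph{not} $\le \fa(\Xi)+\delta$ unless $\fa(\pair)\le\fa(\tom)$; at this stage only $\fa(\pair)\ge\fa(\tom)$ is available (the equality is established only in the next step of the argument). The repair is immediate: since $\fa(\Xi)-\fa(\tom)\le-3\delta$ and $\fa(\pair)=\lim_j\fa\ct{j}$, you still get $\limsup_j\fa\tt{j}\le\fa(\pair)-2\delta<\lim_j\fa\ct{j}$, which suffices. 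The paper avoids this detour by directly estimating the difference $\fa(\Xi_j)-\fa(\om_j)$ using lower semicontinuity of $\fa$ in open subsets under $L^1$-convergence of sets, never passing through $\fa(\pair)$.

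Second, your smoothing is under-specified. Pre-smoothing $\Xi$ inside $B_{\rho/2}(y)$ leaves the portion $\Xi'\cap\an(y,\rho/2,\lambda_j)=\tom\cap\an(y,\rho/2,\lambda_j)$ merely a set of finite perimeter, so ``local smoothing near $\partial B_{\lambda_j}$'' alone does not produce an element of $\opm$. The paper handles this by mollifying the entire glued set $\Xi_j$ (so that $\fa$ converges by strict $BV$-approximation) and then interpolating back to $\om_j$ graphically in a tubular neighborhood of $\de\om_j$ inside an annulus $\an(y,a,b)\subset\an(y,\tau,\rho)$ where $\om_j$ is already smooth; your pre-smoothing of $\Xi$ is then unnecessary.
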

\begin{proof}
If $y \in \interior{\M}$, then the lemma is proved by showing that $\tom$ locally minimizes the perimeter in a small ball with center $y$; the proof is the same as the one of \cite[Lemma 4.2]{delellis2009existence}, then we can assume that $y \in \pM$.

Let us assume by contradiction that there exists $\eta >0$ and an open set $\Xi$ of finite perimeter such that $\Xi \- B_{\rho/2}(y)= \tilde{\om} \- B_{\rho/2}(y)$ and
\begin{equation}\label{eq:assumption_contradiction_minimality}
\fa(\Xi) < \fa(\tilde{\om}) - \eta.
\end{equation}
We want to use this assumption to construct a sequence of competitors for $\fa$ which is better than $\om_j$.

Since $\ind_{\om_j} \to \ind_{\tilde{\om}}$ in $L^1$, by coarea formula the sequence of function
\begin{equation}
\psi_j
\colon
t
\mapsto
\hn \bigl( (\om_j \Delta \tilde{\om}) \cap \partial B_t\bigr)
\end{equation}
converge to $0$ in $L^1([0,\rho])$ as $j \to \infty$. Hence, up to subsequences, we have
\begin{equation}\label{eq:fubini_sphere_l1}
\hn \bigl( (\om_j \Delta \tilde{\om}) \cap \partial B_t\bigr)
\to
0
\quad
\mbox{ for a.e. } t \in [0,\rho].
\end{equation}
We define, for each $j \in \N$,
\begin{equation}
\Xi_j
=
\bigr( \om_j \setminus B_\tau \bigr)
\cup
\bigr( \Xi \cap B_\tau \bigr),
\end{equation}
where $\t$ has to be chosen in $\Big(\frac{\rho}{2}, \rho \Big)$.
%
%
%
%
We have
\begin{equation}
\begin{split}
\fa( \Xi_j)
\leq &
\hn\big((\partial \om_j \setminus B_\tau) \cap \interior{\M} \big) +
\per(\Xi, \interior{B_\tau} \cap \interior{\M})+
\hn \bigl( (\om_j \Delta \tilde{\om}) \cap \partial B_\tau \bigr)
\\
& +
a \hn\big((\partial \om_j \setminus B_\tau) \cap \pM \big)
+
a \hn \big( \des \Xi \cap B_\t \cap \pM \big)
\end{split}
\end{equation}
If we choose $\tau$ such that \eqref{eq:fubini_sphere_l1} holds true and such that the perimeter measure of $\om_j$ and $\tom$ do not charge $\de B_\t$, we have that
\begin{equation}\label{eq:contradiction_minimality_Xi}
\begin{split}
\limsup_{j \to \infty}
\,
\bigl(
\fa(\Xi_j)-\fa(\om_j)
\bigr)
\leq &
\limsup_{j \to \infty}
\,
\Bigl[
\per(\Xi, \interior{B_\tau} \cap \interior{\M})+
\hn \bigl( (\om_j \Delta \tilde{\om}) \cap \partial B_\tau \bigr)
\\
& \quad
 + a \hn \big( \des \Xi \cap B_\t \cap \pM \big)
\\
& \quad
 - \per(\om_j, \interior{B_\tau} \cap \interior{\M})
 - a \hn \big( \des \om_j \cap B_\t \cap \pM \big)
\Bigr]
\\
\leq &
\per(\Xi, \interior{B_\tau} \cap \interior{\M})
+ a \hn \big( \des \Xi \cap B_\t \cap \pM \big)
\\
& \quad
- \liminf_{j \to \infty}
\Bigl[
\per(\om_j, \interior{B_\tau} \cap \interior{\M})
 + a \hn \big( \des \om_j \cap B_\t \cap \pM \big)
\Bigr]
\\
\leq &
\per(\Xi, \interior{B_\tau} \cap \interior{\M})
+ a \hn \big( \des \Xi \cap B_\t \cap \pM \big)
\\
& \quad
- \Bigl[
\per(\tom, \interior{B_\tau} \cap \interior{\M})
+ a \hn \big( \des \tom \cap B_\t \cap \pM \big)
\Bigr]
\\
\leq &
-\eta,
\end{split}
\end{equation}
where the second inequality holds by \eqref{eq:fubini_sphere_l1}, the third by lower semicontinuity of the capillarity functional under $L^1$ convergence of sets, see \cite[Proposition 19.1]{maggi_2012} and the last inequality is given by the assumption \eqref{eq:assumption_contradiction_minimality}.

$\Xi_j$ is not an admissible candidate, since it is not necessarily smooth in $B_{\rho/2}$, hence we have to smooth up these set without increasing $\fa$ too much.

By minimality of the sequence $\{\partial \om_j\}_j$ for the homotopic Plateau problem, we choose $j_0 \in \N$ such that Lemma \ref{lmm:local_balls_competitors} holds and so that
\begin{equation}\label{eq:minimality_homotopic_competitor}
\fa(\om_j)
\leq
\inf \{ \fa\ct{} \mid \ct{} \in \hpp\}
+ \frac{\eta}{8}
\qquad
\forall j \geq j_0.
\end{equation}
We choose an interval $(a,b) \cc (\tau, \rho)$ such that $\bi \om_j$ and $\bb \om_j$ are smooth in $\an(y,a,b)$.
We can choose $V \cc \an(y,a,b)$ so that there exists $d>0$, $(a',b'$ such that
\begin{equation}
V
=
\{(\xi,\zeta) \mid |\xi|\in (a',b'), \zeta \in [0,d]\},
\end{equation}
where $\xi, \zeta$ are normal coordinates in a tubular neighborhood of $\pM$; roughly speacking, in the normal coordinates, $V$ is a ``tube" with base on $\partial \M$.

We call $\gamma = \partial \om_j \cap \de\M \cap V$.
With a small deformation supported in $\an(y,a,b)$ we can ``push-in" $\gamma$ in $\inter(\M)$ in $V$, that is we can assume that, in the normal coordinates $\xi, \zeta$,
\begin{equation}
\partial \om_j \cap V
=
\gamma \times [0,d].
\end{equation}
Moreover $d$ and $(a',b')$ can be chosen small enough so that the possible gain of $\fa$ is as small as we want, and then we can assume, without loss of generality, that $\om_j$ is of this form.

We now extend every $\ind_{\Xi_j}$ to a function $f_j \in BV(\R^{3})$ such that $|Df_j|(\pM)=0$.
We now call $\varphi_\varepsilon$ the standard mollifier and
\begin{equation}
g_{j,\varepsilon} := \ind_{\Xi_j}*\varphi_\varepsilon.
\end{equation}
For every $j > j_0$, by standard approximation procedure \cite[Theorem 13.8]{maggi_2012}, for almost every $t \in (0,1)$ we have that the set $\Delta_{j,\e} := \{g_{j,\e} > t\} \cap \M$ is an open subset of $\M$ which is smooth in $\M$ and satisfies
\begin{equation}
\Delta_{j, \e} \tto{L^1_{\text{loc}}(\M)}{\e \to 0} \ind_{\Xi_j},
\qquad
\per \big(\Delta_{j,\e}, \interior{\M}\big) \tto{}{\e \to 0} \per\big( \Xi_j, \interior{\M} \big).
\end{equation}
Since $\fa$ is continuous under strict convergence of $BV$-functions, we obtain
\begin{equation}
\lim_{\e \to 0} \fa(\Delta_{j,\e})
=
\fa(\Xi_j).
\end{equation}
Hence for every $j>j_0$ there exists $\bar{\e}_j >0$ such that the set $\Delta_{j,\e_j}$ satisfies
\begin{equation}
\fa(\Delta_{j,\e}) \leq \fa(\Xi_j) + \frac{\eta}{4}
\qquad
\forall \e \in (0,\bar{\e}_j)
\forall j > j_0.
\end{equation}
Since $\Delta_{j,\e}$ does not coincide with $\om_j$ outside $B_a(y)$, we have to construct a new sequence of sets $\Psi_j$ that ``connect" $\Delta_{j,\e}$ inside $B_a(y)$ with $\om_j$ outside $B_b(y)$ with a small possible increase of $\fa$. This is possible since both $\om_j$ and $\Delta_{j,\e}$ are smooth within $\an(y,a,b)$.

We fix $j>j_0$ a tubular neighbourhood $U$ of $\partial \om_j$ with normal coordinates $(w,z)$;
since by mollification $\de \Delta_{j,\e}$ converges smoothly to $\de \om_j$ in $\an(y,a,b)$ as $\e \to 0$, there exist functions $\psi_{j,\varepsilon}$ such that
\begin{equation}
\om_j \cap U= \{ (w,s) \mid s < 0\},
\qquad
\Delta_{j,\varepsilon} \cap U = \{(w,s) \mid s < \psi_{j,\varepsilon}(w)\}
\quad
\mbox{ in } \an(y,a,b)
\end{equation}
and $\psi_{j,\varepsilon} \to 0$ smoothly as $\varepsilon \to 0$.

To join smoothly the two sets we choose a function $\chi_A \in C_c^{\infty}(B_b)$ such that $0 \leq \chi_A \leq 1$ and $\chi_A \equiv 1$ in a neighbourhood of $B_a$ and we construct a new family of open sets $\T_{j,\varepsilon}$:
\begin{itemize}
\item $\T_{j,\varepsilon}\cap B_a = \Delta_{j,\varepsilon} \cap B_a$;
\item $\T_{j,\varepsilon} \cap \an(y,a,b) = \{(w,s) \mid s < \chi_A(w) \psi_{j,\varepsilon(w)}\}$;
\item $\T_{j,\varepsilon} \setminus B_b = \om_j \setminus B_b$.
\end{itemize}
By construction $\bi \T_{j,\varepsilon}, \bb \T_{j,\varepsilon}$ are smooth outside of a finite set.

By smooth convergence $\psi_{j,\varepsilon} \tto{}{\e \to 0} 0$, for every $j >j_0$ there exists $\e_j \in (0,\bar{\e}_j)$ such that
\begin{equation}
\fa(\T_{j,\e_j})
\leq
\fa(\Xi_j) + \frac{\eta}{2}
\leq
\fa(\om_j) - \frac{\eta}{4}
\end{equation}
if $j$ is sufficiently large, by \eqref{eq:contradiction_minimality_Xi}. Since by Lemma \ref{lmm:local_balls_competitors} $\T_{j,\e_j} \in \hpp$, this is a contradiction with the minimality of the sequence $\om_j$ in $\hpp$ for $\fa$.
%
%
%
%
%

\end{proof}

\subsubsection{Step 4: $\pairk{k}$ is compatible with $\tom^k$}

We have now to show that the pair $\pair$ is induced by $\bi \tom$ and $\bb \tom$. By \cite[Proposition A.1]{delellis2009existence}, we have only to prove that
\begin{equation}
\lim_{j \to \infty} \hn(\s_j)
=
\hn( \bi \tom),
\qquad
\lim_{j \to \infty} \hn( \g_j)
=
\hn(\bb \tom).
\end{equation}
We first observe that, since $\fa$ is continuous under varifolds convergence of pairs, we have
\begin{equation}
\lim_{j \to \infty} \fa \ct{j}
=
\fa \pair
\end{equation}
Moreover, since $\fa$ is lower semicontinuous under $L^1$-convergence of open sets, it holds
\begin{equation}
\qquad
\fa(\tom)
\leq
\lim_{j \to \infty} \fa \ct{j}.
\end{equation}
The above inequality cannot be strict, because otherwise there would exist a sufficiently small ball $B$ such that the same inequality holds and we can argue as the proof of Lemma \ref{lmm:minimality_tom_boundary}, defining new competitors $\Xi_j = \bigr( \om_j \setminus B_\tau \bigr) \cup \bigr( \tom \cap B_\tau \bigr)$ which would contradict the minimality of sequence $\om_j$. Hence
\begin{equation}\label{eq:fa_tom=fa_pair}
\fa( \tom)
=
\lim_{j \to \infty} \fa \ct{j}
=
\fa \pair.
\end{equation}
Since $\ind_{\om_j} \to \ind_{\tom}$ in $L^1(\ru)$, we have
\begin{equation}\label{eq:semicontinuity_perimeter_hpp}
\begin{gathered}
\per(\tom,\M)
\leq
\liminf_{j \to \infty} \per(\om_j, \M);
\\
\hn(\bi \tom)
=
\per(\tom, \interior{\M})
\leq
\liminf_{j \to \infty} \per(\om_j, \interior{\M})
=
\norm{V};
\\
\hn(\bb \tom)
=
\per(\tom, \pM)
\geq
\limsup_{j \to \infty} \per(\om_j, \pM)
=
\norm{W}.
\end{gathered}
\end{equation}
Let us call
\begin{equation}
\eta:= \norm{V} - \hn(\bi \tom),
\qquad
\z := \hn(\bb \tom) - \norm{W}.
\end{equation}
\eqref{eq:semicontinuity_perimeter_hpp} yields $\eta, \z \geq 0$. The statement is proved if we show that $\eta=\z=0$. To do so, we first compute
\begin{equation}
\begin{split}
\eta - a \z
= &
\norm{V} + a \norm{W}
-\big(
\hn(\bi \tom) + a \hn(\bb \tom)
\big)
\\
= &
\fa \pair - \fa(\tom)
\\
\stackrel{\eqref{eq:fa_tom=fa_pair}}{=} &
\,0,
\end{split}
\end{equation}
On the other hand, one has
\begin{equation}
\begin{split}
\eta - \z
=&
\norm{V}+ \norm{W}
- \big(
\hn(\bi \tom) + \hn(\bb \tom)
\big)
\\
=&
\lim_{j \to \infty} \Big( \hn(\s_j) + \hn(\g_j) \Big)
-
\per(\tom, \M)
\\
=&
\lim_{j \to \infty} \per(\om_j, \M) - \per(\tom, \M)
\\
\geq &
0
\end{split}
\end{equation}
gathering the previous two equations we get that $\z = 0$, hence $\eta=0$, as desired.

\subsubsection{Step 5: $\bi\tom^k$ and $\bb \tom^k$ are regular surfaces in $\an$}

By the minimality of $\tom$ for $\fa$ in small balls, we use use the regularity theorems for minimizers of capillarity functional\cite[Theorem 1.2 and Corollary 1.4]{DePMag14} to obtain that $V$ and $W$, which are induced by $\bi \tom$ and $\bb \tom$, coincide in $\an$  with surfaces
$\ts{}, \tg{}$ that are smooth.

\subsection{Construction of replacements: proof of theorem \ref{prop:existence_replacements} }

\begin{proof}[Proof of the theorem \ref{prop:existence_replacements}]
\begin{enumerate}
\item
Since every $\pairk{k}$ is a stationary stable pair in $\an$, by Theorem \ref{thm:compactness_stable_capillarity} we have that the sequence of pairs $\pairk{k}$ converges, up to subsequences, to a pair $\tpair$ which is induced by a pair of regular surfaces in $\an$ which are stationary and stable for $\fa$ in $\an$.

We select a diagonal sequence $\tk{k}^k = (\bi \om_{j(k)}^k, \bb \om_{j(k)}^k)$ (where for every $k \in \N$ $\{\om_j^k\}_j$ is a minimizing sequence for the $\frac{\e_k}{8}$-homotopic Plateau problem $\hpp$) which satisfies
\begin{equation}
\fa \tk{k}
\leq
\fa \ck{k}
\qquad
\forall k \in \N
\end{equation}
and such that $\tk{k} \to \tpair$.
Since every $\tk{k} \in \hpp$, we have
\begin{equation}\label{eq:new_sequence_min-max}
\fa \ck{k} - \e_k
\leq
\fa \tk{k}
\leq
\fa \ck{k},
\end{equation}
we have that $\tk{k}^k$ is a new min-max sequence for $\fa$.

\item
We fix $\eta>0$ and $\an'= \an(x,r-\eta,s + \eta)$ such that $\an' \in \anul{r(x)}{x}$.

\begin{itemize}
\item We set $\tilde{r}(x) = r(x)$; $\tk{k}$ is $\e_k$-almost minimizing in every annulus in $\anul{r(x)}{x}$; indeed, if there exists $\an'' \in \anul{r(x)}{x}$ such that $\tk{k}$ is not $\e_k$-almost minimizing in $\an''$, since for every $k$, $\tk{k}$ is obtained by deforming $\ck{k}$ in $\an$ without passing through a pair with $\fa$ larger than $\fa\ck{k} + \e_k/8$, this would imply that also $\ck{k}$ is not almost minimizing in an annulus $\an''' \in \anul{r(x)}{x}$ such that $\an''' \supset \an \cup \an''$ and this is a contradiction.

\item if $y \in \an'$, we set $\tilde{r}(y) = \min \{r(y), \dist(y,\partial \an')\}$; In this case $\tk{k}$ is almost minimizing in every annulus belonging to $\anul{\tilde{r}(y)}{y}$ because such an annulus is contained in $\an'$ and $\tk{k}$ is almost minimizing in $\an'$.

\item if $y \notin \an'$, we set $\tilde{r}(y) = \min \{r(y), \dist(y,\an)\}$. If $\an'' \in \anul{\tilde{r}(y)}{y}$ then $\an'' \subset \M \- \an$; since $\tk{k}$ and $\ck{k}$ coincide outside $\an$ and since $\ck{k}$ is almost minimizing in every annulus in $\anul{r(y)}{y}$, we obtain that $\tk{k}$ is a.m. in $\an''$.
\end{itemize}

\item We are left to prove that $\tpair$ is a replacement for $\pair$ in $\an=\an(x,r,s)$. Since $\tk{k}^k$ is a min-max sequence, we have
\begin{equation}
\fa \tpair
=
\fa \pair.
\end{equation}
By construction, it follows that $\tpair$
is stationary in $\an$ and in $\M \- \an$. We have only to prove that it is stationary in $\M$.
Let us assume, by contradiction, that $\tpair$ is not stationary. We fix $\eta>0$, a vector field $X \in \Xt$ such that $\dfa \tpair [X] \leq - \eta$
and we choose a partition of unity for the cover $\{\an', \M\- \an\}$ of $\M$: $\varphi \in C_c^{\infty}(\an')$ and $\psi \in C_c^{\infty}(\M\- \an)$. Hence
\begin{equation}
\dfa \tpair [X]
=
\dfa \tpair [\varphi X]
+
\dfa \tpair [\psi X].
\end{equation}
Since $\tpair$ is stationary in $\M\- \an$, it follows that $\dfa \tpair [\psi X]=0$. Thus
\begin{equation}
\dfa \tpair [\varphi X]
\leq
- \eta
\end{equation}
By continuity of first variation there exists $T>0$ such that, if $\Phi_t$ is the flow of $\varphi X$, then
\begin{equation}
\dfa (\Phi_t)_\sharp \tpair [\varphi X]
\leq
- \frac{\eta}{2}
\qquad
\forall t \in [0,T].
\end{equation}
Since $\tk{k} \tto{}{k} \tpair$, by continuity of first variation again, there exists $\bar{k} \in \N$ such that
\begin{equation}
\dfa (\Phi_t)_\sharp \tk{k} [\varphi X]
\leq
- \frac{\eta}{4}
\qquad
\forall t \in [0,T],
\forall k > \bar{k}.
\end{equation}
Hence
\begin{equation}
\fa \bigl((\Phi_t)_\sharp \tk{k} \bigr)
\leq
\fa \tk{k}
-t \,\frac{\eta}{4}
\qquad
\forall t \in [0,T],
\,\,\,
\forall k > \bar{k}.
\end{equation}
Thus we have a smooth deformation of $\tk{k}$ with support in $\an'$ which lowers $\fa$ of a fixed amount.
But this contradicts, for $k$ large enough, the almost minimizing property of $\tk{k}$ in $\an'$.
\end{enumerate}
\end{proof}

\begin{rem}
Since $\tk{k}$ is a new min-max sequence which is almost minimizing in each annulus in $\anul{\tilde{r}(y)}{y}$, we can repeat the previous arguments to construct a new replacement of $\tpair$ in each annulus in $\anul{\tilde{r}(y)}{y}$. In particular, since $\tilde{r}(x)= r(x)$, we can construct a again a new min-max sequence and a new replacement of $\tpair$ in every annulus in $\anul{r(x)}{x}$.
This procedure can be in fact repeated an infinite number of times.
\end{rem}

\section{Regularity of $V$}\label{sec8}
In this final section we prove that  \(\pair\) is indeed induced by a smooth pair \((\Sigma, \Gamma)\), thus competing the proof of Theorem \ref{thm:main}
\subsection{Integer rectifiability of $V$}

\begin{proposition}
Let $\pair$ be the pair of varifolds given by Proposition \ref{prop:almost_minimizing_min-max_sequences}. Then $V$ and $W$ are rectifiable and $V$ is integer rectifiable. Moreover, for any $x \in \pM \cap \supp\norm{V}$, any tangent pair $\cpair$ to $\pair$ at $x$ is such that $C$ and $K$ are half-planes and $C$ meets $T_x \pM$ with angle $\theta$.
\end{proposition}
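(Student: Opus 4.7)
I would prove the proposition by analyzing tangent pairs at boundary points of $\supp\|V\|$ via the replacement procedure, then deducing rectifiability from the tangent structure. The plan rests on three ingredients already established: the monotonicity formula for stationary pairs (Proposition \ref{prop:monotonicity_formula_infty}), the existence and iteration of replacements (Proposition \ref{prop:existence_replacements} together with the remark closing Section \ref{sec7}), and the Bernstein rigidity plus compactness for stable pairs (Theorems \ref{thm:Bernstein_cap} and \ref{thm:compactness_stable_capillarity}).

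\textbf{Step 1: tangent pairs exist and are cones.} Since $(V,W)$ is stationary, Proposition \ref{prop:monotonicity_formula_infty} with $H\equiv 0$ gives existence of $\T^2(\|V\|+a\|W\|,\cdot)$ everywhere and a uniform bound on the mass ratio in small balls. Fix $x\in\pM\cap\supp\|V\|$ and a sequence $r_j\downarrow 0$; the rescalings $(\dil{x,r_j})_\sharp V$ and $(\dil{x,r_j})_\sharp W$ have locally bounded mass in $\ru$, so along a subsequence they converge to a pair $(C,K)$ which is stationary for $\fa$ in the half-space $\NN:=\{y\cdot N(x)\leq 0\}$ (the blow-up of $\M$ at $x$, using $\pM\in C^{2,\alpha}$). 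By monotonicity, $(C,K)$ is a cone centered at the origin.

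\textbf{Step 2: smoothness via replacements, then Bernstein.} I would apply the replacement procedure on a family of overlapping annuli around $x$. Proposition \ref{prop:existence_replacements} produces a first replacement in an annulus $\an_1\in\anul{r(x)}{x}$, induced there by a smooth stationary-stable pair; by the remark closing Section \ref{sec7} the construction is iterable in a further annulus $\an_2$ with $\tilde r(x)=r(x)$, yielding a pair smooth and stable for $\fa$ in $\an_1\cup\an_2$. A finite iteration on overlapping annuli covering $B_{r(x)/2}(x)\setminus\{x\}$ then produces, for every $0<s<t<r(x)/2$, a replacement in $\an(x,s,t)$ that is smooth and stable for $\fa$ and meets $\pM$ with contact angle $\theta$. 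Rescaling by $r_j$ and invoking Theorem \ref{thm:compactness_stable_capillarity}, the tangent pair $(C,K)$ is, on each $\an(0,s,t)\subset\ru$, induced by a smooth stable minimal pair meeting $T_x\pM$ with angle $\theta$. The area bound \eqref{eq:volume_ratio_bernstein} follows from Step 1, so Theorem \ref{thm:Bernstein_cap} forces $C$ to be a half-plane in $\NN$ meeting $T_x\pM$ with angle $\theta$, and $K$ to be a half-plane in $T_x\pM$.

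\textbf{Step 3: rectifiability, and the main obstacle.} Combined with the analogous statement at interior points (which follows from the same argument using the interior replacement of \cite{delellis2009existence}), Step 2 shows that $\T^2(\|V\|,x)$ exists and is a positive integer $\|V\|$-a.e. Allard's rectifiability theorem, applicable since $V+aW$ has locally bounded first variation by Proposition \ref{prop:bv_angle_varifolds}, then yields that $V$ is integer rectifiable; rectifiability of $W$ follows because its blow-ups at $\|W\|$-a.e.\ point are multiples of $\haus{2}$ on half-planes of $T_x\pM$. The main obstacle is Step 2: one must concatenate overlapping annuli so that successive replacements produce a pair smooth throughout a punctured neighborhood of $x$ without breaking stationarity or the angle condition, and then pass smoothness and stability to the blow-up through Theorem \ref{thm:compactness_stable_capillarity}. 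The enabling observation is precisely the remark in Section \ref{sec7} that the replacement procedure is repeatable while preserving $r(x)$, making the finite concatenation possible.
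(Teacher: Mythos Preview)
Your Step 2 contains two genuine gaps. First, iterating replacements on overlapping annuli $\an_1,\an_2$ does not yield a pair that is smooth in $\an_1\cup\an_2$: the second replacement is smooth in $\an_2$ and coincides with the first outside $\an_2$, but smoothness across $\partial\an_2\cap\an_1$ requires a unique-continuation argument (transversality at the interface plus classification of tangent cones as planes) that presupposes exactly the half-plane structure you are trying to prove. The paper does use such an iteration, but only \emph{later}, in Proposition \ref{prop:regularity_punctured}, once the present proposition is already in hand; invoking it here is circular. Second, even dropping the iteration, rescaling a replacement of $(V,W)$ in $\an(x,\lambda r_j,\lambda^{-1}r_j)$ and passing to the limit via Theorem \ref{thm:compactness_stable_capillarity} produces a \emph{replacement} $(\tilde C,\tilde K)$ of $(C,K)$ in $\an(0,\lambda,\lambda^{-1})$, not $(C,K)$ itself. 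Your sentence ``the tangent pair $(C,K)$ is, on each $\an(0,s,t)$, induced by a smooth stable minimal pair'' conflates these two objects, and you never explain why they coincide.

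The paper's route avoids both issues by working with a single replacement for each $\lambda$ and arguing entirely on the blow-up side. Since $(C,K)$ and $(\tilde C,\tilde K)$ agree outside $\an(0,\lambda,\lambda^{-1})$ and have equal total mass, the monotonicity identity forces $\tilde C+a\tilde K$ to have constant density ratio, hence to be a cone; because it is smooth with density in $\N\cup(a+\N)$ inside the annulus, $\tilde C$ and $\tilde K$ are separately cones there and extend conically to $\ru\setminus\{0\}$, where Theorem \ref{thm:Bernstein_cap} applies to the \emph{replacement}. One then identifies $C+aK$ with $\tilde C+a\tilde K$ as cones, and finally proves $\|C\|(T_x\pM)=0$ by a separate argument (comparing masses of $C$ and its replacement in the interior of the half-space) to disentangle $C=\tilde C$ and $K=\tilde K$. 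These cone and identification steps are the heart of the proof and are absent from your sketch.
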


\begin{proof}
We divide the proof in few steps:

\medskip
\noindent
\(\bullet\)\emph{$V$ and $W$ are rectifiable}

If $p \in \interior{\M}$, then there exists a neighborhood $U$ of $p$ such that $V$ is integer rectifiable in $U$, see \cite[Lemma 5.2]{delellis2009existence}.
So we fix a point $x \in \partial \M$ such that $\T_2(\sumpair,x)>0$ and choose a sequence $r_j \downarrow 0 $; the rescaled pairs $\paira{j}:=(\dil{x,r_j})_{\sharp} \pair$ converge to a pair $\cpair$ such that
\begin{enumerate}
\item $\supp C \subset T_x\M$ and $\supp K \subset T_x \pM$;
\item $\dfa \cpair [X]=0$ for every $X \in \X_t(T_x \M)$;
\item $\rho^{-2} \big(\norm{C}(B_\rho) + a \norm{K}(B_\rho)\big) = \sigma^{-2} \big(\norm{C}(B_\sigma) + a \norm{K}(B_\sigma)\big) = \td(\sumpair,x)$ for every $\rho, \sigma >0$.
\end{enumerate}
All these statements follow by the fact that, by definition, the varifold $V + aW$ has free boundary (Definition \ref{def:varifold_free_boundary}) and it  is indeed  stationary for the mass with respect to all variations tangent to $T_x \pM$, see for example the proof of \cite[Lemma 5.4]{demasi2021rectifiability}.


Since $\pair$ has replacement $\tpairk{j}$ in every annulus $\an(x,r_j,2r_j)$ for $j$ sufficiently large, by Theorem \ref{thm:compactness_stable_capillarity} the rescaled pairs $(\dil{x,r_j})_\sharp\tpairk{j}$ converge to a pair $\tcpair$ which is  a replacement for $\cpair$ in $\an(0,1,2)$.
By the properties of replacements, we have
\begin{equation}
\begin{split}
\frac{\norm{\tilde{C}}(B_{1/2}) + a \norm{\tilde{K}}(B_{1/2})}{(1/2)^2}
= &
\frac{\norm{{C}}(B_{1/2}) + a \norm{{K}}(B_{1/2})}{(1/2)^2}
\\
= &
\frac{\norm{{C}}(B_{5/2}) + a \norm{{K}}(B_{5/2})}{(5/2)^2}
\\
= &
\frac{\norm{\tilde{C}}(B_{5/2}) + a \norm{\tilde{K}}(B_{5/2})}{(5/2)^2}.
\end{split}
\end{equation}
By the monotonicity identity for varifolds with free boundary (see e.g. Step 5 in the proof of \cite[Lemma 5.4]{demasi2021rectifiability}) one gets
\begin{equation}\label{eq:equality_density_ratios_replacements}
\frac{ \norm{\tilde{C}}(B_\rho) + a \norm{\tilde{K}}(B_\rho)}{\rho^{2} }
=
\frac{\norm{\tilde{C}}(B_\sigma) + a \norm{\tilde{K}}(B_\sigma)}{\sigma^{2} }
=
\td(\sumpair,x)
\qquad
\forall \rho, \sigma >0.
\end{equation}
This means that for every $\rho>0$ we have $\supp (\norm{\tilde{C}}+a\norm{\tilde{K}}) \cap \de B_\rho \neq \emptyset$, because otherwise \eqref{eq:equality_density_ratios_replacements} does not hold for $\rho-\e, \rho+\e$ for some small $\e>0$.

For any $y \in \de B_{3/2} \cap \supp (\norm{\tilde{C}}+a\norm{\tilde{K}})$, since $\tilde{C}$ and $\tilde{K}$ are regular in $\an(0,1,2)$, it follows that $\td(\norm{\tilde{C}}+a\norm{\tilde{K}},y) \geq 1/2$. By monotonicity of density ratios for $\tilde{C}+a\tilde{K}$ given by Proposition \ref{prop:monotonicity_formula_infty}, we have that there exists an universal constant $c>0$ such that
\begin{equation}
{\norm{\tilde{C}}(B_{1/2}(y))+ a \norm{\tilde{K}}(B_{1/2}(y))}
\geq
c.
\end{equation}
\eqref{eq:equality_density_ratios_replacements} and $B_{1/2}(y) \subset B_2$ give the existence of an universal constant $c'>0$ such that
\begin{equation}
\td(\sumpair,x) > c'
\qquad
\forall
x \in \supp(\norm{V} + a\norm{W}).
\end{equation}
Since $V + a W$ is stationary with respect to $\Xt$, \cite[Theorem 1.1, (1.6)]{demasi2021rectifiability} implies that it has bounded first variation in the whole space $\ru$ with a uniform bound on the first variation given by the mass $(\norm{V}+a\norm{W})(\M)$. This, together with the above uniform lower bound on the density and the Allard Rectifiability Theorem \cite[Theorem 42.4]{simon:lectures}, proves that $V+aW$ is a rectifiable varifold.
Since $W$ is rectifiable because it is the weak-* limit of a sequence of $L^{\infty}$ functions on $\pM$, we have that $V$ is rectifiable.

\medskip
\noindent
\(\bullet\)\emph{$C+aK$ is a rectifiable cone}

We claim that every tangent pair $\cpair$ to $\pair$ is such that $C+aK$ is a rectifiable cone. Indeed $C+aK$ is obtained as limit of rescalings $(\dil{x,r_j})_\sharp (V+aW)$ which are stationary for $\X_t\big(\dil{x,r_j}(\M)\big)$; by \cite[Theorem 1.1 and (1.6)]{demasi2021rectifiability}, we have that $(\dil{x,r_j})_\sharp (V+aW)$ have uniformly bounded first variation, then we can apply the Compactness Theorem for rectifiable varifolds \cite[Theorem 42.7]{simon:lectures}, to get the rectifiability of $C+aK$. The fact that $C+aK$ is a cone follows by the same arguments in Step 5 in the proof of \cite[Lemma 5.4]{demasi2021rectifiability}.

\medskip
\noindent
\(\bullet\)\emph{$C$ and $K$ are half-planes and $C$ intersects $T_x \pM$ with angle $\theta$.}

Let $\cpair$ a tangent pair to $\pair$ at $x$.
For any $\lambda \in (0,1)$, $\pair$ has replacement $\tpairk{j}$ in every annulus $\an(x,\lambda r_j,\lambda^{-1}r_j)$ for $j$ sufficiently large and the rescaled pairs $(\dil{x,r_j})_\sharp\tpairk{j}$ converge to a pair $\tcpair$ which is  a replacement for $\cpair$ in $\an(0,\lambda,\lambda^{-1})$. Moreover, by the above discussion, $\tilde{C}+a\tilde{K}$ is a rectifiable varifold.

As above, for any $\sigma \in (0,\lambda)$ and for any $\rho > \lambda^{-1}$, we have
\begin{equation}
\begin{split}
\frac{\norm{\tilde{C}}(B_{\sigma}) + a \norm{\tilde{K}}(B_{\sigma})}{\sigma^2}
= &
\frac{\norm{{C}}(B_{\sigma}) + a \norm{{K}}(B_{\sigma})}{\sigma^2}
\\
= &
\frac{\norm{{C}}(B_{\rho}) + a \norm{{K}}(B_{\rho})}{\rho^2}
\\
= &
\frac{\norm{\tilde{C}}(B_{\rho}) + a \norm{\tilde{K}}(B_{\rho})}{\rho^2}.
\end{split}
\end{equation}
Again this and the same arguments in Step 5 in the proof of \cite[Lemma 5.4]{demasi2021rectifiability} prove that $\tilde{C}+a\tilde{K}$ is a rectifiable cone. The same arguments as above show that both $\tilde{C}, \tilde{K}$ are rectifiable varifolds.

Since $\tilde{C}+a\tilde{K}$ is a cone and $\tilde{C}, \tilde{K}$ are smooth surfaces in $\an(0,\lambda,\lambda^{-1})$ with integer multiplicity, it follows that both $\tilde{C}$ and $\tilde{K}$ are cones in $\an(0,\lambda,\lambda^{-1})$; indeed, the density of $\tilde{C}+a\tilde{K}$ takes values in $\N \cup (a+\N)$ (since $\tilde{K}$ has multiplicity 1) and has to be constant on half lines starting from the origin (because $\tilde{C}+a\tilde{K}$ is a cone), then the densities of $\tilde{K}$ and $\tilde{C}$ must be constant on lines.

Since $\tcpair$ is a replacement of $\cpair$ in $\an(0,\lambda,\lambda^{-1})$ and $\tilde{C}$ and $\tilde{K}$ are cones in $\an(0,\lambda,\lambda^{-1})$, if we consider the pair of smooth surfaces which induce $\tcpair$ in $\an(0,\lambda,\lambda^{-1})$ and we extend them in a conical way up to the origin and the infinity, the pair of surfaces $\ts{}$ and $\tg{}$ are stationary and stable in $\ru \- \{0\}$, thus we can apply the Bernstein Theorem (Theorem \ref{thm:Bernstein_cap}) to conclude that $\ts{}$ and $\tg{}$ are half planes and $\ts{}$ meets $T_x \pM$ with contact angle $\theta$.

By the fact that $C + aK$ is a cone which coincides with $\ts{} + a \tg{}$ in $\an(0,\lambda,\lambda^{-1})$, it follows that it coincides with $\ts{} + a \tg{}$ on the whole $\ru$. This in particular implies, since $\tg{}$ and $\supp \norm{K}$ are contained in $T_x \pM$, that
\begin{equation}\label{eq:cone_coincides_replacement}
C \cap \interior{(T_x \M)}
=
\ts{} \cap \interior{(T_x \M)}.
\end{equation}
This in turn implies that any replacement in an annulus for $\cpair$ coincide with $\ts{}$ in the annulus and in  $\interior{(T_x \M)}$, that is the surface $\ts{}$ is independent on the choice of $\lambda$.

%

We claim that $\norm{C}(T_x \pM)=0$. Indeed, if were not, there would exist $y \in T_x \pM \- \{0\}$ and a small ball $B_r(y)$ such that $0 \notin B_r(y)$ and $\norm{C}\big( B_r(y) \cap \pM \big) >0$. Let us consider $\lambda$ so small so that $B_r(y) \subset \an(0,\lambda,\lambda^{-1})$. Let us consider the replacement $\tcpair$ of $\cpair$ in $\an(0,\lambda,\lambda^{-1})$. We have
\begin{equation}
\begin{gathered}
\norm{C} \big(B_{\lambda^{-1}}\big)
=
\norm{\tilde{C}}\big(B_{\lambda^{-1}})
\\
C \cap B_\lambda = \tilde{C} \cap B_\lambda
\end{gathered}
\end{equation}
This implies that $\norm{C}\big( \an(0, \lambda, \lambda^{-1})\big) = \norm{\tilde{C}}\big( \an(0, \lambda, \lambda^{-1})\big)$ but the fact that
\begin{equation}
\norm{\tilde{C}}\big( T_x \pM) \cap \an(0,\lambda,\lambda^{-1}) \big)
=
\hn\big(\ts{} \cap T_x \pM \cap \an(0,\lambda,\lambda^{-1}) \big)
=
0
\end{equation}
contradicts \eqref{eq:cone_coincides_replacement}.
Hence $\norm{C}(T_x \pM)=0$ and $C = \ts{}$. In turn, by $C + a K = \ts{}+a \tg{}$, it follows that $K = \tg{}$. Hence $C$ and $K$ are half-planes and $C$ intersects $T_x \pM$ with angle $\theta$.

\medskip
\noindent
\(\bullet\)\emph{$V$ is integer rectifiable.}

We have proved that, for every tangent pair $\cpair$ to $\pair$ at $x$, it holds $\norm{C}(T_x \pM) =0$. Since $(\dil{x,r_j})_\sharp V \wto C$, we have that
\begin{equation}
\limsup_{j \to \infty} \frac{\norm{V}\big(B_{r_j}(x) \cap \pM \big)}{r_j^2}
=
\limsup_{j \to \infty} \norm{(\dil{x,r_j})_\sharp V}\big(B_{1} \cap \dil{x,r_j}(\pM) \big)
\leq
\norm{C} \big( B_1 \cap \pM \big)
=
0.
\end{equation}
A straightforward covering argument yields $\norm{V}(\pM)=0$. Since $V$ is integer rectifiable in the interior of $\M$, it is integer rectifiable in all $\M$.
\end{proof}

\subsection{Regularity of $V$}


We first note  that \(V\) is regular and stable outside a finite number of point
\begin{proposition}\label{prop:regularity_punctured}
Let $\pair$  be the pair of varifolds given by Proposition \ref{prop:almost_minimizing_min-max_sequences} and let us fix $x \in \pM$. Then there exists $\rho>0$ such that $V$ is induced in $B_\rho(x) \- \{x\}$ by a regular minimal surface \(\Sigma\)  which meets $\pM$ with angle $\theta$. Moreover \(\Sigma\) is stable for the capillarity  energy.
\end{proposition}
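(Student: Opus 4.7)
The plan is to adapt the Colding--De Lellis regularity scheme to the capillarity setting by iterating the replacement procedure of Proposition~\ref{prop:existence_replacements} and gluing the resulting regular pieces. Fix $x \in \pM$ and choose $\rho < r(x)/2$ small enough that $\pM \cap B_{2\rho}(x)$ is a $C^{2,\alpha}$ graph over $T_x \pM$ and the monotonicity of Proposition~\ref{prop:monotonicity_formula_infty} applies on $B_{2\rho}(x)$. From the rectifiability analysis just completed, every tangent pair $\cpair$ to $\pair$ at $x$ consists of half-planes, with $C$ meeting $T_x \pM$ at the prescribed angle $\theta$; this half-plane blow-up model will be used to rule out singularities in the punctured ball.

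For each $0 < s < \rho/2$ I apply Proposition~\ref{prop:existence_replacements} to produce a replacement $\paira{s}$ of $\pair$ in the annulus $\an(x, s, \rho)$. Inside this annulus $\paira{s}$ is induced by a smooth pair $(\Sigma_s, \Gamma_s)$ which is stationary and stable for $\fa$ and meets $\pM$ with contact angle $\theta$ along $\Gamma_s$. Since $\paira{s}$ is itself stationary and still admits replacements in annuli of $\anul{\tilde r(x)}{x}$ with $\tilde r(x) = r(x)$, the construction can be iterated in a second, slightly shifted annulus, producing a further replacement that is smooth in both annuli simultaneously. Comparing two such consecutive replacements on their common annular overlap is the core of the argument.

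The main obstacle is to prove that two consecutive replacements coincide on the overlap, so that $V$ is itself induced there by $\Sigma_s$. This will be a strong maximum principle / unique continuation statement for stable stationary capillarity pairs: two smooth minimal surfaces with the same mass on an open set, touching tangentially at an interior point, must coincide by the classical strong maximum principle for the minimal surface equation, and two such surfaces touching at a point of $\pM$ with the common contact angle $\theta < \pi/2$ must coincide by the corresponding boundary version. The latter crucially uses $\theta < \pi/2$: both sheets are transverse graphs in a normal chart around the touching point, and the capillarity condition reduces to a standard oblique derivative boundary condition for an elliptic equation on which the Hopf boundary point lemma applies. A cross-comparison of the two replacements, combined with the fact that they have equal $\fa$-mass on the overlap, forces the required tangential contact, and the maximum principle then yields equality.

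Finally, letting $s \to 0$ along a subsequence, the curvature estimate of Theorem~\ref{thm:curvature_estimates} (which applies since each $\Sigma_s$ is stable and the area growth is controlled by the monotonicity of Proposition~\ref{prop:monotonicity_formula_infty}) together with the compactness of Theorem~\ref{thm:compactness_stable_capillarity} shows that, on any compact subset of $B_\rho(x) \setminus \{x\}$, the surfaces $\Sigma_s$ converge smoothly to a smooth stable minimal surface $\Sigma$ meeting $\pM$ at angle $\theta$; by the gluing argument $V$ coincides on $B_\rho(x) \setminus \{x\}$ with the varifold induced by $\Sigma$. Stability of $\Sigma$ for the capillarity energy is inherited from the stability of each $\Sigma_s$ by the same compactness theorem, completing the proof.
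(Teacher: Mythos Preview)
The gluing step is where your argument has a genuine gap. You assert that ``a cross-comparison of the two replacements, combined with the fact that they have equal $\fa$-mass on the overlap, forces the required tangential contact,'' but equal mass on an annulus does not by itself force two stable minimal surfaces to touch, let alone to touch tangentially. Two replacements are a priori built from different minimizing sequences in $\hpp$ and need not coincide, and nothing in your outline supplies a point where the strong maximum principle or unique continuation can be invoked. The subsequent step (letting $s\to 0$ and using compactness) then only produces \emph{some} smooth stable limit, with no mechanism identifying it with $V$.

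The paper closes this gap by a different and simpler mechanism: it exploits the \emph{interior} regularity of $V$, already known from \cite{delellis2009existence}, to anchor the unique continuation. Concretely, fix a connected component $\s_i$ of $V$ in $\an(x,\rho,4\rho)\cap\interior{\M}$ (smooth by the interior theory), pick an interior point $y\in\s_i$, and choose $t$ with $\rho<t<4\rho$ so that $\s_i$ meets $\partial B_t(x)$ transversally at some $z\in\interior{\M}$. Now take a \emph{single} replacement $\tpair$ in $\an(x,\rho,t)$. Since $\tilde V=V$ outside $B_t(x)$, any tangent cone to $\tilde V$ at $z$ contains the half of $T_z\s_i$ lying outside $B_t(x)$; being stationary and stable, the cone must be the full plane, so $z$ is regular for $\tilde V$. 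The component $\tilde\s$ of $\tilde V$ through $z$ and $\s_i$ are then two smooth minimal surfaces agreeing on an open set near $z$, hence equal by unique continuation. Since $\tilde\s$ is smooth up to $\pM$ in $\an(x,\rho,t)$ by the replacement property, so is $\s_i$. Varying $\rho$ gives regularity in $B_{r(x)/4}(x)\setminus\{x\}$, and stability follows because $V$ coincides with its (stable) replacement in each such annulus. No iterated replacements, no $s\to0$ limit, and no use of $\theta<\pi/2$ are needed at this stage; the restriction $\theta<\pi/2$ enters only in the next proposition, when removing the last singular point $x$.
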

\begin{proof}
The proof is exactly the same as \cite[10.3]{delellis2017minmax} but we sketch here the argument for the reader convenience.

Let us consider $x \in \pM$, $r(x)$  so that $\pair$ has replacements in any annulus $\an \in \anul{r(x)}{x}$. Let us fix $\rho >0$ such that $4\rho < r(x)$. By the results in \cite{delellis2009existence}, we have that $V$ is regular in $\interior{\M}$. Let us call $\s_1,\s_2, \dots$ the connected components of $V$ in $\an(x,\rho,4\rho) \cap \interior{\M}$ and let us fix one of them $\s_i$ with a regular point $y \in \s_i$. Since $y$ is regular for $\s_i$, if $s=|x-y|$ we can find $\bar{\sigma}>0$ such that
\begin{itemize}
\item $\s_i$ is regular in $B_{\bar{\sigma}}(y)$;
\item $\s_i \cap \de B_{s +\sigma}(x) \neq \emptyset$ (by maximum principle \cite[Theorem 5.1 $(ii)$]{delellis2009existence}) for every $\sigma \in (0,\bar{\sigma})$;
\item the intersection between $\s_i$ and $\de B_{s + \sigma}(x)$ is transversal (by Sard's Lemma) for almost every $\sigma \in (0,\bar{\sigma})$.
\end{itemize}
We consider such a $\sigma$, call $t = s + \sigma$ and fix a point $z \in \s_i \cap \de B_t(x) \cap \interior{\M}$. We consider a replacement $\tpair$ for $\pair$ in $\an(x,\rho,t)$. Again by the interior theory in \cite{delellis2009existence}, $\tilde{V}$ is regular in $\interior{\M}$.
Since $V$ and $\tilde{V}$ coincide outside $B_t(x)$, any tangent cone $C$ to $\tilde{V}$ at $z$ must contain a half-plane $\pi$ (since $z$ is regular for $\s_i$); since $C$ is stationary and stable, it is a plane, hence $C$ coincides with the whole plane $\pi$. This implies that $z$ is regular for $\tilde{V}$.

If we call $\tilde{\s}$ the connected component of $\tilde{V}$ that contains $z$, by unique continuation, $\tilde{\s}$ and $\s_i$ coincide and, since $\tilde{\s}$ is regular up to $\pM$ in $\an(x,\rho,t)$ (because $\tpair$ is a replacement for $\pair$), also $\s_i$ is regular up to $\pM$ in $\an(x,\rho,t)$. Since the choice of $y$ is arbitrary, we have that $\s_i$ is regular up to $\pM$ in the whole annulus $\an(x,\rho,4\rho)$.

By monotonicity formula, it follows that there are only finitely many connected components of $V$ in $\an(x,2\rho,3\rho)$: $\clos{\s}_1, \dots, \clos{\s}_N$ and each of them is a regular minimal surface which meets $\pM$ with angle $\theta$; let us call $S_i$ ($i=1,\dots, N$) the singular sets of the surfaces $\clos{\s}_i$.

Let us consider a point $y \in \clos{\s}_i \cap \an(x,2\rho,3\rho) \- \bigcup_{j=1}^N S_j$.
We claim that $y$ is a regular point for $V$; of course, since $y$ is regular for $\clos{\s}_i$, to prove this it is sufficient to show that $y$ does not belong to any of the others $\clos{\s}_j$; if this were the case for a $\clos{\s}_j$, $y$ would be regular also for $\clos{\s}_j$, and since $\s_i \cap \s_j \cap \interior{\M} = \emptyset$, then $T_y \clos{\s}_i = T_y \clos{\s}_j$, but this contradicts the maximum principle.

On the other hand, since \(\partial M\) is convex, $\nv(\pM)=0$, and any point $y \in \supp \nv \cap \an(x,2\rho,3\rho)$ belongs to some $\clos{\s}_i$.

This shows that $V$ is regular up to $\pM$ in $\an(x,\rho,4\rho)$; since the radius $\rho$ is arbitrary (except for the fact that $4\rho < r(x)$), this argument proves that $V$ is regular in $B_{r(x)/4}(x) \- \{x\}$.

Stability of \(\Sigma\) in  $B_{r(x)/4}(x) \- \{x\}$ follows again by checking the regularity proof of \cite[10.3]{delellis2017minmax}, since there it is shown that \(V\) coincides with each of its replacements in any $\an \in \anul{r(x)}{x}$
\end{proof}

In order to conclude the proof we only need to remove the final (possible) singular point \(x\). Here, the stability of \(V\) in a in a (small) punctuerd ball plays a crucial role. The following Proposition is indeed a generalization to capillarity surfaces of  of the ``removable'' singularity theorem for two dimensional stable minimal surfaces first established by Lawson and Gulliver,~\cite{GulliverLawson86}, see also \cite{simon1982isolated} and~\cite{Meeks07}. Although the proof of Lawson and Gulliver can be probably generalized to this setting , we follow here a different route. To explain the idea, let us  assume that \(\M=\{x_1\ge0\}\) and that \(\Sigma\) is globally stable. Then it is well known that 
\[
v={\nu_{\Sigma}} \cdot{e_1}
\]
is a jacobi field which is negative at the boundary due to the contact angle condition. By the stability inequality \(v\) should be negative also inside since otherwise its positive part would induce a negative variation. This implies that \(\Sigma\) can be expressed as a union of graph with respect to \(\{x_1=0\}\). Each of this graph will be a  stationary point  point of the functional 
\[
\mathcal{F} (g)=\int \sqrt{1+|\nabla g|^2}+a|\{g>0\}|
\]
and the regularity will follow by the regularity theory of~\cite{AltCaffarelliFriedman84} (see also~\cite{CaffarelliFriedman85}) and more precisely the implementation in~\cite{De-Silva11}). In order to make the proof  rigorous we shall first localize the above argument and then precise in which sense \(g\) is solving a free boundary problem.

\begin{proposition}
Let $\pair$  be the pair of varifolds given by Proposition \ref{prop:almost_minimizing_min-max_sequences}, let us fix $x \in \pM$ and let $\rho>0$ be the radius given by Proposition \ref{prop:regularity_punctured}. Then $V$ is regular in the whole $B_\rho(x)$, that is is induced in $B_\rho(x)$ by a regular minimal surface which meets $\pM$ with angle $\theta$.
\end{proposition}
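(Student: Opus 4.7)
The strategy, sketched in the paragraph preceding the statement, is to exploit $\theta<\pi/2$ to realize $V$ locally as a finite union of graphs over $\partial\M$ and then invoke free boundary regularity theory. I would start by localizing: choose $C^{2,\alpha}$ coordinates centered at $x$ in which $\partial\M$ becomes $\{y_1=0\}$ and $\M$ becomes $\{y_1\ge 0\}$, and rescale so that on a small ball $B_\rho(x)$ the metric and $\partial\M$ are as close as desired to the Euclidean half-space. By Proposition~\ref{prop:regularity_punctured}, in $B_\rho(x)\setminus\{x\}$ the varifold $V$ is induced by a smooth stable minimal surface $\Sigma$ meeting $\partial\M$ at angle $\theta$, and by Proposition~\ref{prop:monotonicity_formula_infty} its density ratios at $x$ are uniformly bounded.

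The key step is a Jacobi field argument. On $\Sigma\setminus\{x\}$ the function $v:=\nu_\Sigma\cdot e_1$, with $e_1=-N(x)$, satisfies the Jacobi equation $(\Delta_\Sigma+|A_\Sigma|^2)v=0$ up to lower order curvature terms that vanish as $\rho\to 0$. The contact angle identity $\eta_\Sigma\cdot N=\sin\theta$, combined with the orthogonality of $\eta_\Sigma$ and $\nu_\Sigma$ in the plane perpendicular to $\partial\Sigma$, forces $v=\pm\cos\theta\ne 0$ on $\partial\Sigma$. Orienting $\Sigma$ so that $v<0$ there, I claim $v\le 0$ on all of $\Sigma\setminus\{x\}$. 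If not, $v^+\not\equiv 0$ is supported strictly inside $\Sigma$ (since $v<0$ in a neighborhood of $\partial\Sigma$) and, away from $x$, is admissible in the stability inequality for the quadratic form $Q$ of~\eqref{eq:definition_stability_operator} via a vector field compactly supported in $\interior{\M}$. Integration by parts using the Jacobi equation yields $Q(v^+)=0$; by stability $v^+$ must then solve $(\Delta_\Sigma+|A_\Sigma|^2)v^+=0$ while vanishing on the open set $\{v<0\}$, contradicting unique continuation for elliptic PDE. The singular point $x$ is absorbed by a logarithmic cutoff whose $L^2$ gradient error is bounded by $C/|\log r|$ thanks to the two-dimensional area bound $\hn(\Sigma\cap B_r(x))\le Cr^2$ provided by Proposition~\ref{prop:monotonicity_formula_infty}.

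Once $v$ has constant sign, $\Sigma\setminus\{x\}$ is locally a graph over $\{y_1=0\}$ by the implicit function theorem. Using integer rectifiability of $V$ and the bounded density at $x$, these sheets assemble into finitely many ordered graphs $0\le g_1\le\dots\le g_N$ over an open subset of $\{y_1=0\}$. In graph variables the contact angle translates to $|\nabla g_i|=\tan\theta$ on $\partial\{g_i>0\}$, while minimality gives the minimal surface equation in $\{g_i>0\}$; equivalently, each $g_i$ is a viscosity solution of the one-phase problem
\begin{equation}
\mathrm{div}\!\left(\frac{\nabla g_i}{\sqrt{1+|\nabla g_i|^2}}\right)=0\ \text{in }\{g_i>0\},\qquad |\nabla g_i|=\tan\theta\ \text{on }\partial\{g_i>0\},
\end{equation}
which is the Euler--Lagrange equation for the functional $\mathcal F(g)=\int\sqrt{1+|\nabla g|^2}+a|\{g>0\}|$ with $a=\cos\theta$ mentioned before the statement. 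I would then invoke the regularity theory of Alt--Caffarelli--Friedman~\cite{AltCaffarelliFriedman84,CaffarelliFriedman85}, in the form implemented by De~Silva~\cite{De-Silva11}: any Lipschitz viscosity solution of such a pinned-slope one-phase problem is $C^{1,\alpha}$ up to its (relatively open) free boundary, and the free boundary itself is a $C^{1,\alpha}$ manifold. Bootstrapping with the minimal surface equation in $\{g_i>0\}$ upgrades each $g_i$ to smoothness, so $\Sigma$ extends as a smooth minimal surface meeting $\partial\M$ at angle $\theta$ through $x$ as well.

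The step I expect to be the main obstacle is the passage from \emph{locally graphical on $\Sigma\setminus\{x\}$} to \emph{finitely many ordered global graphs on a full neighborhood of $x$}. Ruling out accumulation or spiralling of sheets as $y\to x$ requires combining the constant-sign Jacobi field with integer rectifiability of $V$ and the quantitative density upper bound from the monotonicity formula; while broadly analogous to interior arguments, the presence of the boundary and of the isolated singular point on it adds nontrivial technical bookkeeping, and is the reason one first reduces to the nearly flat half-space model.
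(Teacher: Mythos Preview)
Your overall strategy---use a Jacobi-field sign argument to prove graphicality, then apply one-phase free boundary regularity---is exactly the paper's. The free boundary formulation you write is the flat-boundary case of the paper's (which retains the graph $h$ of $\pM$), and the invocation of De~Silva's theory is the same. Two execution points, however, separate your sketch from a complete proof.

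First, you do not control $v=\nu_\Sigma\cdot e_1$ on the \emph{outer} boundary $\Sigma\cap\partial B_\rho(x)$. Your claim that $v^+$ ``is supported strictly inside $\Sigma$'' relies only on $v<0$ near $\partial\Sigma\subset\pM$; nothing forces $v<0$ where $\Sigma$ exits $B_\rho(x)$, so $v^+$ is not admissible in the stability inequality as written. The paper fixes this by first establishing (via the curvature estimates and Theorem~\ref{thm:Bernstein_cap}) that every blow-up of $\Sigma$ at $x$ is a plane meeting $T_x\pM$ at angle $\theta$; along a subsequence $r_j\downarrow 0$ the rescaled surface is $C^1$-close to this plane on annuli, forcing $v$ close to $-a$ on $\partial B_{r_j}(x)$ and $\partial B_{r_k}(x)$. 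The stability/Jacobi argument is then run in the compact annulus $\an(x,r_k,r_j)$, where $v^+$ genuinely has compact support, and only afterward is $k\to\infty$ taken. This also makes your logarithmic cutoff at $x$ unnecessary and avoids the borderline-divergent integral $\int_\Sigma |A_\Sigma|^2 v_+^2$ near $x$ (recall $|A_\Sigma|\lesssim |y-x|^{-1}$, so this integrand behaves like $|y-x|^{-2}$ against an area growth of $r^2$).

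Second, running the argument only for the single direction $e_1$ yields $v\le 0$ and hence local graphicality, but no Lipschitz bound on $g$---which you explicitly need when invoking De~Silva's theory. The paper repeats the identical argument for \emph{every} unit vector $e$ with $|e-e_1|\le\delta$, where $\delta=\delta(a)$ is chosen so that $-a+\varepsilon+\delta<0$; the conclusion $\nu_\Sigma\cdot e<0$ for all such $e$ is a cone condition on $\nu_\Sigma$ that translates directly into $|\nabla g|\le 1/\delta$. With this uniform Lipschitz bound in hand, the ``passage to finitely many ordered global graphs'' that you flag as the main obstacle becomes routine, and is not where the real difficulty lies.
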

\begin{proof} The proof is divided in few steps:

\medskip
\noindent
\emph{Step 1}: First by arguing exactly as in  \cite[10.4]{delellis2017minmax}, we note that $V\llcorner B_{\rho}(x)$ consists by finitely many connected components $\s_i$ with integer multiplicities $h_i$ which are homeomorphic to $2$-balls without the center $x$.

\medskip
\noindent
\emph{Step 2}: We now fix such a surface $\s^i = \s$, remove the multiplicity $h_i$ and we aim to show that it can be extended smoothly through \(x\). We first  claim that for \(y \in \Sigma \cap \pM \cap B_{\rho}(x)\) the 
sequence \(\s^y_j := (\dil{y,r_j})_\sharp \s\) converges (up to subsequences) to  a plane \(\pi_y\) which forms the correct angle with \(T_{y} \pM\). Note that this is obvious if \(y\ne x\) (since the surface is regular there) and we only need to show it for \(y=x\). For, we note that   the curvature estimates (Theorem \ref{thm:curvature_estimates}) yield the existence of a constant $c>0$ such that
\begin{equation}
|\as(y)|
\leq
\frac{c}{|y -x|}
\qquad
\forall y \in B_\rho(x).
\end{equation}
so that \(\s^x_j\) converges locally smoothly in \(\ru \setminus \{0\}\) to an entire surface \(\Xi\) which is smooth and stable outside the origin. By  Theorem \ref{thm:Bernstein_cap}, \(\Xi\) is a plane through the origin.

\medskip
\noindent
\emph{Step 2}: We now show that \(\Sigma\) is a Lipschitz graph with respect to \(T_{x} \pM\). To this end we can assume that\[
N_{T_x \pM}
=
-e_1.
\]
We now  let \(r_j\) be a sequence such that \(\s_j := (\dil{x,r_j})_\sharp \s\) converges to a plane \(\Xi\) locally smoothly. Note that the plane might depend on the subsequence, but in any case we can rotate the coordinates so that 
\[
\nu_\pi
=
-a e_1 + \sqrt{1-a^2}e_2.
\]
Since the convergence of $\s_j \to \pi$ is in the $C^1$-topology on annuli, for every $\e >0$ there exists $J \in \N$ such that
\begin{equation}
\sup_{y \in B_{r_j}(x) \- B_{r_j/4}(x)} |\nu_\s(y) - \nu_\pi|
<
\e
\qquad
\forall j>J,
\end{equation}
where $\nu_\s$ is the unit normal vector to $\s$. Hence, for any $k > j > J$, we have
\begin{equation}\label{eq:normal_annuli_close_plane}
\sup_{y \in \s \cap(\de B_{r_j}(x) \cup \de B_{r_k}(x))} |\nu_\s(y) - \nu_\pi|
<
\e.
\end{equation}
Let us consider for \(e\in \mathbb S^2\) and \(|e-e_1|\le \delta\) to be fixed ,the function
\begin{equation}
v(y)
=
\nu_\s(y) \cdot e
\quad
\forall y \in \s.
\end{equation}
By \eqref{eq:normal_annuli_close_plane}, it follows that for every $\e, \delta>0$ there exists $J \in \N$ such that
\begin{equation}
\sup_{y \in \s \cap(\de B_{r_j}(x) \cup \de B_{r_k}(x))}
v(y)
\leq
-a + \e+\delta
\qquad
\forall k > j \geq J.
\end{equation}
Moreover, by the contact angle condition and the regularity of $\s$ up to $\pM$, we get
$v(y) = -a$ for every on $y \in \s \cap \pM \cap \an(x,r_k,r_j)$. Hence
\begin{equation}
\sup_{y \in \s \cap\big(\pM \cup \de B_{r_j}(x) \cup \de B_{r_k}(x)\big)}
v(y)
\leq
-a + \e+\delta<0
\qquad
\forall k > j \geq J.
\end{equation}
provided \(\delta \) and \(\e\) are chosen sufficiently small but depending only on \(a\).

It is a classical fact that $v$ is a Jacobi field, that is 
\[
\Delta_\s v + |\as|^2v = 0.
\]
We now claim that $v$ has constant non-positive sign also in the interior of the annulus $\an(x,r_k,r_j)$. Indeed, the positive part $v_+ := \max\{v,0\}$ is a function with compact support in the interior of $\an(x,r_k,r_j)$ and we can test the weak form of the Jacobi equation with the function $v_+$ as test function, that is
\begin{equation}
\begin{split}
0
= &
\int_\s \nabla v_+ \cdot \nabla v - |\as|^2 v v_+ \dif \hn
\\
= &
\int_\s |\nabla v_+|^2 - |\as|^2 (v_+)^2 \dif \hn
\\
= &
Q(v_+)
\end{split}
\end{equation}
where $Q$ is the stability operator defined in \eqref{eq:definition_stability_operator}.
Since $Q \geq 0$ (by stability of $\s$), it follows that $v_+$ also is a non negative solution of the Jacobi equation, a contradiction with the strong minimum principle. Hence for each \(e\) sufficiently close to \(e_1\), 
\[
\nu_\s \cdot e<0 \qquad\text{on \(\Sigma \cap \an(x,r_k,r_j)\)}
\]
which is easily seen to imply that \(\Sigma \cap \an(x,r_k,r_j)\) can be represented as the graph of a Lipschitz function \(g: \R^2\to \R\) 
\[
\Sigma \cap \an(x,r_k,r_j)=\{x_1=g(x_2,x_3)\}\cap \an(x,r_k,r_j)
\]
and \(|\nabla g|\le 1/\delta\) for $\delta >0$ depending only on $a$. Letting \(k \to \infty\) we deduce that \(\Sigma\cap B_{r_j}\) is the graph of a Lipschitz function.

\medskip
\noindent
\emph{Step 3}: We now claim that the function \(g\) in the above step is a solution of a free boundary problem. To this end note first that, up to taking \(j\) large and a  scaling (and translation) of  the coordinates that we can assume that 
\[
\Sigma\cap B_1=\{x_1=g\} \cap B_1,
\qquad
\pM\cap B_1=\{x_1=h\} \cap B_1
\]
for a suitable function \(h\) with \(h(0)=|\nabla h (0)|=0\). Note that \(g \ge h\). It is now easy to check that \(g\) is a viscosity solution of the equation 
\[
\begin{cases}
-\Div\Bigl(\dfrac{\nabla g}{\sqrt{1+|\nabla g|^2} }\Bigr)=0\qquad &\text{on \(\{g>h\}\)}
\\
-\nabla g \cdot {\nabla h}+1=a\sqrt{1+|\nabla g|^2}  \sqrt{1+|\nabla h|^2} \qquad &\text{on \(\{g=h\}\)}.
\end{cases}
\]
Indeed the first equation is clearly satisfied classically. As for the boundary condition we first note that  for \(x_0\in \{g=h\}\), the point \((g(x_0), x_0)\in\Sigma\cap \pM\)  and thus, thanks to Step \(1\), the functions 
\[
g_r(x)=\frac{g(x_0+rx)-g(x_0)}{r}
\]
converges (up to subsequences) to a linear function \(\ell\) with the property that 
\[
-\nabla \ell \cdot {\nabla h(x_0)}+1=a\sqrt{1+|\nabla \ell|^2}  \sqrt{1+|\nabla h(x_0)|^2}. 
\]
It is easy to verify that this implies that the condition is satisfied in the viscosity sense, see for instance the computations in~\cite{De-PhilippisSpolaorVelichkov21}. Note that this also implies that, up to choose \(r_j\) smaller, we can assume that the free boundary \(\{g = h\}\) 
\[
 \|g-\ell\|_{L^\infty)B'_1(0)}<\e
\]
where \(\ell(x)=x_2 \sqrt{(1-a^2)/a^2}\) (recall the normalizations at \(x\)). One can then apply some straightforward modifications of the arguments in~\cite{De-Silva11} to show that the free boundary a \(C^1\) curve. Note here that  the Lipschitz assumption on \(g\)  ensures the  uniform ellipticity of the minimal surface operator.

\medskip
\noindent
\emph{Step 4}: The previous step implies that each of the \(\Sigma^j\) are smooth graphs near \(x_0\) and that they satisfy the minimal surface equation with constant  capillarity conditions. By Hopf Lemma, they have to coincide, so that \(x\) is a regular point as well.

\end{proof}

\appendix
\section{Proof of the results of section \ref{subsec:monotonicity}}\label{sec:appendix_monotonicity}
\begin{proof}[Proof of Proposition \ref{prop:bv_angle_varifolds}]
If $X \in \Xm$, there exist $X^T, X^\perp$ such that $X=X^T+X^\perp$, with $X^T \in \Xt$ and $X^\perp \in \Xp$. We have
\begin{equation*}
\dive_S X(x)
=
\dive_S X^T(x)
+
\dive_S X^\perp(x).
\end{equation*}
By Definition \ref{eq:def_contact_angle}, both $V$ and $V+aW$ have generalized mean curvature $H \ind_{\interior{\M}}$ with respect to $\Xo$; thus by \cite[Theorem 1.1]{demasi2021rectifiability}, there exists a positive Radon measure $\muv$ on $\pM$ and a $\nv$-measurable vector field $\tilde{H}$ such that
\begin{gather}\label{eq:normal_variation_V}
\begin{split}
\int_{G_2(\M)} \dive_S X^\perp(x) \dif V(x,S)
= &
- \int_\M {X^\perp}\cdot{H \ind_{\interior{\M}} + \tilde{H}} \dif \nv
+ \int_{\pM} {X}\cdot{N} \dif \muv
\\
=&
-
\int_\M {X^\perp}\cdot{H + \tilde{H}} \dif \nv
+ \int_{\pM} {X}\cdot{N} \dif \muv,
\end{split}
\end{gather}
where the last equality follows because $H$ is assumed to be tangent to $\pM$ on $\pM$ (see last part of Definition \ref{def:contact_angle_condition}). Moreover by the definition of $\tilde{H}$ given in \cite[(3.12)]{demasi2021rectifiability}, it follows that in our case $\tilde{H} = - N(x) \dive_{\pM} N(x)$ (since on $\pM$, $V$ charges only planes which are tangent to $\pM$, by \cite[Lemma 3.1]{demasi2021rectifiability}). The estimates
\eqref{eq:estimate_muv_global} and \eqref{eq:estimate_measure_boundary} on $\muv$ are given by \cite[Theorem 1.1]{demasi2021rectifiability}.

For the same reason, also $V+aW$ has generalized mean curvature $H \ind_{\interior{\M}}$ with respect to $\Xo$ and satisfies the properties as above with the same $\tilde{H}$ and $\muv$ (because $\muv$ depends only of the behavior of the varifold in the interior of $\M$, see its definition in \cite{demasi2021rectifiability}).

Thus $W$ has generalized mean curvature $0$ with respect to $\Xo$ and
\begin{equation}
\int_{G_2(\M)} \dive_{\pM} X^\perp(x) \dif \nw
=
- \int_{\pM} {X^\perp}\cdot{\tilde{H}} \dif \nw
\end{equation}
for the same $\tilde{H}$ as above.
For what concerns the tangent part $X^T$, Definition \ref{def:contact_angle_condition} yields
\begin{equation*}
\begin{split}
\int_{G_2(\M)} \dive_S X^T(x) \dif V(x,S)
+
a \int_{G_2(\M)} \dive_S X^T(x) \dif \nw
=
- \int_\M {X^T}\cdot{H} \dif \nv.
\end{split}
\end{equation*}
This shows the conclusion.
\end{proof}

In order to prove Proposition \ref{prop:monotonicity_formula_p}, we first derive a monotonicity inequality for the masses.
\begin{lemma}[Monotonicity inequality]\label{lmm:monotonicity_inequality_s}
Suppose $\pair \in \cu$ satisfy the contact angle condition $\theta$, with $H \in L^p(\M,\nv)$ for some $p \in [1,+\infty)$. Then there exists a constant $c>0$ that depends only on $p$ and on the second fundamental form of $\pM$ such that, for all $x_0 \in \pM$, the following inequality holds:
\begin{equation}\label{eq:monotonicity_inequality_s}
\begin{split}
(1+c\rho)
\frac{\dif}{\dif \rho}
\bigg(
\frac{1}{\rho^2}
&
\int_{\M} \gamma\Big( \frac{|x|}{\rho}\Big)
\dif (\nv + a \nw)
\bigg)^{\frac{1}{p}}
\\
\geq &
-
\rho^{-\frac{2}{p}}
\left[
\frac{1+\rho}{p}
\left(
\int_{\M} \gamma\Big( \frac{|x|}{\rho}\Big)|H +\tilde{H}|^p \dif \nv
\right)^{\frac{1}{p}}
-
\frac{a}{p}
\left(
\int_{\M} \gamma\Big( \frac{|x|}{\rho}\Big)|\tilde{H}|^p \dif \nw
\right)^{\frac{1}{p}}
\right]
\\
& -
c(1+\rho)
\left( \frac{1}{\rho^2}
\int_{\M} \gamma\Big( \frac{|x|}{\rho}\Big) \dif (\nv + a \nw)
\right)^{\frac{1}{p}}
\end{split}
\end{equation}
\end{lemma}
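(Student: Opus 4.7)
The plan is to closely follow the proof of \cite[Corollary 4.8]{demasi2021rectifiability}: I would substitute a radial test vector field into the first variation identity \eqref{eq:total_fvf_varifold_angle} provided by Proposition \ref{prop:bv_angle_varifolds} and then process the resulting identity via Hölder's inequality, taking care of the fact that, unlike the situation in \cite{demasi2021rectifiability}, here $V+aW$ carries a nontrivial bulk mean curvature $H$ supported only on $\nv$.

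Fix $x_0 \in \pM$, write $r(x) = |x - x_0|$, and let $\gamma : [0,+\infty) \to [0,+\infty)$ be a smooth non-increasing cutoff with $\supp \gamma \subset [0,1)$. I would plug the vector field $X(x) = \gamma(r/\rho)(x-x_0) \in \Xm$ into \eqref{eq:total_fvf_varifold_angle}. The tangential divergence is
\begin{equation*}
\dive_S X(x) = 2\gamma(r/\rho) + \frac{\gamma'(r/\rho)}{\rho r}|P_S(x-x_0)|^2,
\end{equation*}
and combining this with the elementary identity
\begin{equation*}
\frac{d}{d\rho}\bigg(\frac{1}{\rho^2}\int \gamma(r/\rho)\,d\mu\bigg) = -\frac{2}{\rho^3}\int\gamma(r/\rho)\,d\mu - \frac{1}{\rho^4}\int\gamma'(r/\rho)\, r\,d\mu
\end{equation*}
applied with $\mu = \nv + a\nw$, one rewrites \eqref{eq:total_fvf_varifold_angle} as a differential inequality for $I(\rho) := \rho^{-2}\int\gamma(r/\rho)\,d(\nv + a\nw)$. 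On the right-hand side of the resulting identity three sources of error appear: the bulk term $-\int X \cdot H\,d\nv$, which combines with the $\nv$-part of $-\int X \cdot \tilde{H}\,d(\nv + a\nw)$ to produce the combined factor $H+\tilde H$ tested against $\nv$; the surviving $\nw$-part, tested against $\tilde H$; and the boundary term $\int_{\pM} \gamma(r/\rho)(x-x_0) \cdot N\,d\muv$.

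To control the boundary term I would use that $x_0 \in \pM$ and $\pM$ is of class $C^{2,\alpha}$: a second-order Taylor expansion at $x_0$ yields $|(x-x_0) \cdot N(x)| \leq c(\M)|x-x_0|^2$ for $x \in \pM$ near $x_0$, which together with the local bound \eqref{eq:estimate_measure_boundary} on $\muv$ controls the boundary contribution by $c\,\rho^2\bigl(\nv(B_\rho(x_0)) + \rho\int_{B_\rho(x_0)}|H|\,d\nv\bigr)$. After taking the $p$-th root this becomes the error term $c(1+\rho)I(\rho)^{1/p}$ in \eqref{eq:monotonicity_inequality_s}; a similar absorption of the $\tilde H$ piece, bounded by the second fundamental form of $\pM$, produces the factor $1+c\rho$ multiplying the derivative on the left-hand side.

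The only genuine technical point is the bookkeeping when one applies Hölder with exponents $p$ and $p/(p-1)$ \emph{after} having isolated the derivative of $I(\rho)^{1/p}$: the resulting algebra is literally the one carried out in \cite[Corollary 4.8]{demasi2021rectifiability}, the sole novelty being that in our setting the bulk mean curvature $H$ appears additively to $\tilde H$ in the $\nv$-integral while $\nw$ only feels $\tilde H$, which accounts for the asymmetric appearance of $|H+\tilde H|^p\,d\nv$ and $|\tilde H|^p\,d\nw$ on the right of \eqref{eq:monotonicity_inequality_s}.
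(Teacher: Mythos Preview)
Your overall plan—plugging the radial test field $X=\gamma(r/\rho)(x-x_0)$ into \eqref{eq:total_fvf_varifold_angle}, discarding the nonnegative $|P_{S^\perp}(x/|x|)|^2$ term, and estimating the two curvature contributions via H\"older—matches the paper exactly. The gap is in your treatment of the boundary measure $\muv$.

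You propose to bound $\int_{\pM}\gamma(r/\rho)(x-x_0)\cdot N\,d\muv$ by combining the Taylor estimate $|(x-x_0)\cdot N|\le c|x-x_0|^2$ with the black-box bound \eqref{eq:estimate_measure_boundary}. But \eqref{eq:estimate_measure_boundary} controls $\muv(B_{r/2})$ by $\tfrac{c}{r}\nv(B_r)+\int_{B_r}|H|\,d\nv$, so after the Taylor step the resulting error in $I'(\rho)$ is of order $\rho^{-2}\nv(B_{2\rho})$, not $\rho^{-2}\nv(B_\rho)$. (Your stated bound $c\rho^2\bigl(\nv(B_\rho)+\rho\int_{B_\rho}|H|\bigr)$ carries an extra factor of $\rho$ that is not there; the correct bound on the raw boundary term is $c\rho\,\nv(B_{2\rho})+c\rho^2\int_{B_{2\rho}}|H|$.) The differential inequality is then not closed: the right-hand side involves the mass at scale $2\rho$ while the left involves the cutoff at scale $\rho$, and no a priori doubling estimate is available—that is precisely what you are trying to prove.

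The paper avoids this by \emph{not} invoking \eqref{eq:estimate_measure_boundary}. Instead it re-tests the normal first-variation identity \eqref{eq:normal_variation_V} with the auxiliary field $X=-\gamma(|x|/\rho)\,\nabla d(x)$, where $d=\dist(\cdot,\pM)$. This produces a bound on $\rho^{-1}\int\gamma\,d\muv$ expressed entirely through the \emph{same} cutoff $\gamma(\cdot/\rho)$, together with an extra term $2\rho\,\tfrac{d}{d\rho}\bigl(\rho^{-2}\int\gamma\,d(\nv+a\nw)\bigr)$. It is this derivative term, moved to the left-hand side, that generates the factor $(1+c\rho)$ in the statement—not an absorption of the $\tilde H$ piece as you suggest.
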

\begin{proof}
Without loss of generality we can suppose $x_0 = 0$. Since for large $\rho$ the statement is obvious, we have to prove it only for $0<\rho< R(\M)$, where $R(\M)$ is the minimum radius of curvature of $\M$. We want to bound from below the following derivative:
\begin{equation}\label{eq:monotonicity_derivative_initial}
\begin{split}
\frac{\dif}{\dif \rho}
&
\left(
\frac{1}{\rho^2}
\int_{\M} \gamma\Big( \frac{|x|}{\rho}\Big)
\dif (\nv + a \nw)
\right)^{\frac{1}{p}}
\\
\quad
&
=
\frac{1}{p}
\frac{\dif}{\dif \rho}
\left(
\frac{1}{\rho^2}
\int_{\M} \gamma\Big( \frac{|x|}{\rho}\Big)
\dif (\nv + a \nw)
\right)
\left(
\frac{1}{\rho^2}
\int_{\M} \gamma\Big( \frac{|x|}{\rho}\Big)
\dif (\nv + a \nw)
\right)^{\frac{1-p}{p}}.
\end{split}
\end{equation}
To do so, we want to bound from below the derivative in the right-hand side to get a differential inequality. We have
\begin{equation*}
\begin{split}
\frac{\dif}{\dif \rho}
&
\left(
\frac{1}{\rho^2}
\int_{\M} \gamma\Big( \frac{|x|}{\rho}\Big)
\big( \dif \nv(x) + a \nw(x) \big)
\right)
\\
&
\quad
=
-\frac{1}{\rho^{3}}
\int_{G_2(\M)}
\left( 2 \gamma\Big( \frac{|x|}{\rho}\Big)
+
\frac{|x|}{\rho} \gamma'\Big( \frac{|x|}{\rho}\Big)
\right) \dif (V(x,S) + a W(x,S)).
\end{split}
\end{equation*}
Let us choose $X(x)=\gamma\Big( \frac{|x|}{\rho}\Big) x$. Then
\begin{equation*}
\begin{split}
\dive_S X(x)
=
2 \gamma\Big( \frac{|x|}{\rho}\Big)
+
\frac{|x|}{\rho} \gamma'\Big( \frac{|x|}{\rho}\Big)
\left| P_S\frac{x}{|x|}\right|^2 .
\end{split}
\end{equation*}
We use Proposition \ref{prop:bv_angle_varifolds}: by testing \eqref{eq:total_fvf_varifold_angle} with $X$ we get the following \emph{monotonicity identity}:
\begin{equation}\label{eq:monotonicity_identity_fb}
\begin{split}
\frac{\dif}{\dif \rho}
\left(
\frac{1}{\rho^2}
\int_{\M} \gamma\Big( \frac{|x|}{\rho}\Big)
\dif (\nv + a \nw)
\right)
= &
-
\frac{1}{\rho^{3}} \int_{G_2(\M)} \dive_S X(x) \dif \big(V(x,S) + a W(x,S) \big)
\\
& -
\frac{1}{\rho^{3}} \int_{G_2(\M)} \frac{|x|}{\rho} \gamma'\Big( \frac{|x|}{\rho}\Big) \left| P_{S^\perp}\frac{x}{|x|}\right|^2 \dif \big(V(x,S) + a W(x,S) \big)
\\
=
& \frac{1}{\rho^{3}} \int_\M {X}\cdot{H+\tilde{H}} \dif \nv
- \frac{1}{\rho^{3}} \int_{\pM} {X}\cdot{N} \dif \muv
\\
&+
\frac{a}{\rho^{3}} \int_\M {X}\cdot{\tilde{H}} \dif \nw
\\
&+
-
\frac{1}{\rho^{3}} \int_{G_2(\M)} \frac{|x|}{\rho} \gamma'\Big( \frac{|x|}{\rho}\Big) \left| P_{S^\perp}\frac{x}{|x|}\right|^2 \dif \big(V(x,S) + a W(x,S) \big).
\end{split}
\end{equation}
We have to estimate from below the last member of the above identity.

Since $\gamma' \leq 0$, we can neglect the last integral and, since $|x| \leq \rho$ by cut-off, we obtain
\begin{equation}\label{eq:monotonicity_estimate_fb}
\begin{split}
\frac{\dif}{\dif \rho}
\left(
\frac{1}{\rho^2}
\int_{\M} \gamma\Big( \frac{|x|}{\rho}\Big)
\dif (\nv + a \nw)
\right)
\geq &
- \frac{1}{\rho^{2}} \int_\M \gamma\Big( \frac{|x|}{\rho}\Big)|H +\tilde{H}| \dif \nv
- \frac{a}{\rho^{2}} \int_\M \gamma\Big( \frac{|x|}{\rho}\Big)|\tilde{H}| \dif \nw
\\
& -
\frac{1}{\rho^{2}} \int_{\pM} \gamma\Big( \frac{|x|}{\rho}\Big)
\Bigl| {\frac{x}{|x|}}\cdot{N} \Bigr|
\dif \muv.
\end{split}
\end{equation}
We have now to bound the three terms in the right-hand side of \eqref{eq:monotonicity_estimate_fb}.
\begin{itemize}
\item
For the first one, since $H \in L^p(\M,\nv)$ and $\tilde{H} \in L^{\infty}(\pM,\nv)$, also $H + \tilde{H} \in L^p(\M,\nv)$. Therefore, by H\"older inequality we get
\begin{equation}\label{eq:estimate_curvature_lp}
\begin{split}
\frac{1}{\rho^2}
\int_\M \gamma\Big( \frac{|x|}{\rho}\Big)|H+\tilde{H}| \dif \nv
\leq &
\frac{1}{\rho^2}
\left(
\int_{\M} \gamma\Big( \frac{|x|}{\rho}\Big)|H +\tilde{H}|^p \dif \nv
\right)^{\frac{1}{p}}
\left(
\int_\M \gamma\Big( \frac{|x|}{\rho}\Big) \dif \nv
\right)^{1-\frac{1}{p}}.
\end{split}
\end{equation}
\item
For the second one, since $|\tilde{H}| \in L^{\infty}(\pM)$, by H\"older inequality and $\gamma \leq 1$ again, we obtain
\begin{equation}\label{eq:estimate_curvature_lp_W}
\begin{split}
\frac{1}{\rho^2}
\int_\M \gamma\Big( \frac{|x|}{\rho}\Big)|\tilde{H}| \dif \nw
\leq &
\frac{1}{\rho^2}
\left(
\int_{\M} \gamma\Big( \frac{|x|}{\rho}\Big)|\tilde{H}|^p \dif \nw
\right)^{\frac{1}{p}}
\left(
\int_\M \gamma\Big( \frac{|x|}{\rho}\Big) \dif \nw
\right)^{1-\frac{1}{p}}.
\end{split}
\end{equation}
\item
We now move on the estimate of the third integral in the right-hand side of \eqref{eq:monotonicity_estimate_fb}. Since $\pM$ is of class $C^2$ and since $0 \in \pM$, there exists a constant $c$ such that
\begin{equation}\label{eq:estimate_scalar_normal_radial}
|{\frac{x}{|x|}}\cdot{N(x)}|
\leq
c |x|.
\end{equation}
This yields
\begin{equation}\label{eq:first_estimate_measure_monotonicity}
\frac{1}{\rho^{2}} \int_{\pM} \gamma\Big( \frac{|x|}{\rho}\Big)
\big|{\frac{x}{|x|}}\cdot{N}\big|
\dif \muv
\leq                                  						
\frac{c}{\rho} \int \gamma\Big( \frac{|x|}{\rho}\Big) \dif \muv.
\end{equation}
We have to further estimate the righ-hand side of this inequality. This is done by testing \eqref{eq:normal_variation_V} with $X(x)=-\gamma\Big( \frac{|x|}{\rho}\Big) \nabla d(x)$; by standard computation we get
\begin{equation*}
\begin{split}                          						
\frac{1}{\rho} \int \gamma\Big( \frac{|x|}{\rho}\Big) \dif \muv
= &																
-\frac{1}{\rho} \int_{G_2(\M)} \gamma'\Big(\frac{|x|}{\rho}\Big) \frac{1}{\rho} {P_S \frac{x}{|x|}}\cdot{\nabla d(x)} \dif V(x,S)
\\
& -
\frac{1}{\rho}
\int_{G_2(\M)} \gamma\Big( \frac{|x|}{\rho}\Big) \dive_S \nabla d(x) \dif V(x,S)
\\
& -
\frac{1}{\rho}
\int_\M \gamma\Big( \frac{|x|}{\rho}\Big) {\nabla d(x)}\cdot{H +\tilde{H}} \dif \nv(x).
\end{split}
\end{equation*}
Since $\gamma'(s)=0$ if $s \in (0,1/2)$, we have $\frac{|x|}{\rho} \geq \frac{1}{2}$; Moreover, using $|\dive_S \nabla d|\leq c$ (because $\rho<R(\M)$ and $d$ is of class $C^2$ in $\clos{U_{R(\M)}(\pM)}$ ) and \eqref{eq:estimate_curvature_lp}, we obtain
\begin{equation}\label{eq:penultima_stima_monotonicity}
\begin{split}
\frac{1}{\rho} \int \gamma\Big( \frac{|x|}{\rho}\Big) \dif \muv
\leq &																
-\frac{2}{\rho} \int_{\M} \gamma'\Big(\frac{|x|}{\rho}\Big) \frac{|x|}{\rho^2}
\dif (\nv + a \nw)
+ \frac{c}{\rho}  \int_\M \gamma\Big( \frac{|x|}{\rho}\Big) \dif (\nv + a \nw)
\\
& +
\rho^{1-\frac{2}{p}}
\left(
\int_{\M} \gamma\Big( \frac{|x|}{\rho}\Big)|H +\tilde{H}|^p \dif \nv
\right)^{\frac{1}{p}}
\left( \frac{1}{\rho^2}
\int_\M \gamma\Big( \frac{|x|}{\rho}\Big) \dif (\nv + a \nw)
\right)^{1-\frac{1}{p}}
\\
= & 														
\frac{2}{\rho}
\frac{\dif}{\dif \rho}
\left(
\int_{\M} \gamma\Big( \frac{|x|}{\rho}\Big) \dif (\nv + a \nw)
\right)
-\frac{2}{\rho^2} \int_{\M} \gamma\Big( \frac{|x|}{\rho}\Big) \dif (\nv + a \nw)
\\
& +
\frac{2 +c\rho}{\rho^{2}}  \int_\M \gamma\Big( \frac{|x|}{\rho}\Big) \dif (\nv + a \nw)
\\
& +
\rho^{1-\frac{2}{p}}
\left(
\int_{\M} \gamma\Big( \frac{|x|}{\rho}\Big)|H +\tilde{H}|^p \dif \nv
\right)^{\frac{1}{p}}
\left( \frac{1}{\rho^2}
\int_\M \gamma\Big( \frac{|x|}{\rho}\Big) \dif (\nv + a \nw)
\right)^{1-\frac{1}{p}}.
\end{split}
\end{equation}
Concerning the last member, we now observe that
\begin{equation*}
\begin{split}
\frac{2}{\rho}
\frac{\dif}{\dif \rho}
\left(
\int_{\M} \gamma\Big( \frac{|x|}{\rho}\Big) \dif (\nv + a \nw)
\right)
-&
\frac{2}{\rho^2} \int_{\M} \gamma\Big( \frac{|x|}{\rho}\Big) \dif (\nv + a \nw)
\\
= &
2 \frac{\dif}{\dif \rho}
\left(
\frac{1}{\rho}
\int_{\M} \gamma\Big( \frac{|x|}{\rho}\Big) \dif (\nv + a \nw)
\right).
\end{split}
\end{equation*}
Substituting in \eqref{eq:penultima_stima_monotonicity} we get
\begin{equation*}
\begin{split}
\frac{1}{\rho} \int \gamma\Big( \frac{|x|}{\rho}\Big) \dif \muv
\leq &	\,												
\, 2 \frac{\dif}{\dif \rho}
\left(
\frac{1}{\rho}
\int_{\M} \gamma\Big( \frac{|x|}{\rho}\Big) \dif (\nv + a \nw)
\right)
\\
& +
\frac{2 +c\rho}{\rho^{2}}  \int_\M \gamma\Big( \frac{|x|}{\rho}\Big) \dif (\nv + a \nw)
\\
& +
\rho^{1-\frac{2}{p}}
\left(
\int_{\M} \gamma\Big( \frac{|x|}{\rho}\Big)|H +\tilde{H}|^p \dif \nv
\right)^{\frac{1}{p}}
\left( \frac{1}{\rho^2}
\int_\M \gamma\Big( \frac{|x|}{\rho}\Big) \dif (\nv + a \nw)
\right)^{1-\frac{1}{p}}.
\end{split}
\end{equation*}
Taking into account that
\begin{equation*}
\begin{split}
\frac{\dif}{\dif \rho}
&
\left(
\frac{1}{\rho}
\int_{\M} \gamma\Big( \frac{|x|}{\rho}\Big) \dif (\nv + a \nw)
\right)
\\
& \qquad =
\frac{1}{\rho^2}
\int_{\M} \gamma\Big( \frac{|x|}{\rho}\Big) \dif (\nv + a \nw)
+
\rho
\frac{\dif}{\dif \rho}
\left(
\frac{1}{\rho^{2}}
\int_{\M} \gamma\Big( \frac{|x|}{\rho}\Big) \dif (\nv + a \nw)
\right),
\end{split}
\end{equation*}
we obtain
\begin{equation}\label{eq:last_estimate_measure_monotonicity}
\begin{split}
\frac{1}{\rho} \int \gamma\Big( \frac{|x|}{\rho}\Big) \dif \muv
\leq &	\,	
\frac{4 + c\rho}{\rho^{2}}
\int_{\M} \gamma\Big( \frac{|x|}{\rho}\Big) \dif (\nv + a \nw)
+
2\rho
\frac{\dif}{\dif \rho}
\left(
\frac{1}{\rho^{2}}
\int_{\M} \gamma\Big( \frac{|x|}{\rho}\Big) \dif (\nv + a \nw)
\right)
\\
& +
\rho^{1-\frac{2}{p}}
\left(
\int_{\M} \gamma\Big( \frac{|x|}{\rho}\Big)|H +\tilde{H}|^p \dif \nv
\right)^{\frac{1}{p}}
\left( \frac{1}{\rho^2}
\int_\M \gamma\Big( \frac{|x|}{\rho}\Big) \dif (\nv + a \nw)
\right)^{1-\frac{1}{p}}.
\end{split}
\end{equation}
This complete the estimate of the third term in the right-hand side of \eqref{eq:monotonicity_estimate_fb}.
\end{itemize}
Gathering \eqref{eq:estimate_curvature_lp}, \eqref{eq:estimate_curvature_lp_W}, \eqref{eq:first_estimate_measure_monotonicity} and \eqref{eq:last_estimate_measure_monotonicity} in \eqref{eq:monotonicity_estimate_fb}, we can estimate \eqref{eq:monotonicity_derivative_initial} as follows:
\begin{equation*}
\begin{split}
\frac{\dif}{\dif \rho}
\bigg(
\frac{1}{\rho^2}
&
\int_{\M} \gamma\Big( \frac{|x|}{\rho}\Big)
\dif (\nv + a \nw)
\bigg)^{\frac{1}{p}}
\\
=
&
\frac{1}{p}
\frac{\dif}{\dif \rho}
\left(
\frac{1}{\rho^2}
\int_{\M} \gamma\Big( \frac{|x|}{\rho}\Big)
\dif (\nv + a \nw)
\right)
\left(
\frac{1}{\rho^2}
\int_{\M} \gamma\Big( \frac{|x|}{\rho}\Big)
\dif (\nv + a \nw)
\right)^{\frac{1-p}{p}}
\\
\geq &
-
\frac{1+\rho}{p}
\rho^{-\frac{2}{p}}
\left(
\int_{\M} \gamma\Big( \frac{|x|}{\rho}\Big)|H +\tilde{H}|^p \dif \nv
\right)^{\frac{1}{p}}
-
\frac{a \rho^{-\frac{2}{p}}}{p}
\left(
\int_{\M} \gamma\Big( \frac{|x|}{\rho}\Big)|\tilde{H}|^p \dif \nw
\right)^{\frac{1}{p}}
\\
& -
c(1+\rho)
\left( \frac{1}{\rho^2}
\int_{\M} \gamma\Big( \frac{|x|}{\rho}\Big) \dif (\nv + a \nw)
\right)^{\frac{1}{p}}
\\
& -
c \rho
\frac{\dif}{\dif \rho}
\left(
\frac{1}{\rho^{2}}
\int_{\M} \gamma\Big( \frac{|x|}{\rho}\Big) \dif (\nv + a \nw)
\right)^{\frac{1}{p}},
\end{split}
\end{equation*}
which is the desired inequality.
\end{proof}

\begin{proof}[Proof of Proposition \ref{prop:monotonicity_formula_p}]
Without loss of generality we can assume $x_0=0 \in \pM$. We first notice that we have to prove the result only for small $\rho$, since it is clearly true for large value of $\rho$.

If we call
\begin{equation}
f(\rho)
:=
\bigg(
\frac{1}{\rho^2}
\int_{\M} \gamma\Big( \frac{|x|}{\rho}\Big)
\dif (\nv + a \nw)
\bigg)^{\frac{1}{p}},
\end{equation}
by Lemma \ref{lmm:monotonicity_inequality_s}, there exists a constant $\Lambda = \Lambda\big(c,\norm{H}_{L^p(\M)},\M, (\nv+a\nw)(\M)\big) > 0$ (where $c$ is the constant given by Lemma \ref{lmm:monotonicity_inequality_s}) such that
\begin{equation}
f'(\rho) + \Lambda f(\rho) + \Lambda \rho^{-\frac{2}{p}}
\geq
0.
\end{equation}
Hence, by $p > 2$,
\begin{equation}
f'(\rho) + \Lambda f(\rho) + \Lambda \rho^{-\frac{2}{p}}
+ \frac{\Lambda^2}{1- \frac{2}{p}}\rho^{1 -\frac{2}{p}}
\geq
0.
\end{equation}
Multiplying the above inequality by $e^{\Lambda \rho}$, we get
\begin{equation}
\frac{\dif}{\dif \rho}
e^{\Lambda \rho}
\left[
\left(
\frac{1}{\rho^2}
\int_{\M} \gamma\Big( \frac{|x|}{\rho}\Big)
\dif (\nv + a \nw)
\right)^{\frac{1}{p}}
+ \Lambda \rho^{1- \frac{2}{p}}
\right]
\geq
0.
\end{equation}
Since the constant $c$ of Lemma \ref{lmm:monotonicity_inequality_s} does not depend on the choice of $\gamma$ (the estimates are independent on the choice of $\gamma$ unless that $\gamma'(s)=0$ for $s \in (0,1/2)$), letting $\gamma$ increase to $\ind_{[0,1)}$ we have that the function
\begin{equation*}
\rho
\longmapsto
e^{\Lambda \rho}
\left[
\left(
\frac{1}{\rho^2}
\int_{\M} \gamma\Big( \frac{|x|}{\rho}\Big)
\dif (\nv + a \nw)
\right)^{\frac{1}{p}}
+ \Lambda \rho^{1- \frac{2}{p}}
\right]
\end{equation*}
is monotone increasing.
\end{proof}

\begin{proof}[Proof of Proposition  \ref{prop:monotonicity_formula_infty}]
Since $H \in L^{\infty}(\M, \nv)$, in \eqref{eq:estimate_curvature_lp} and \eqref{eq:estimate_curvature_lp_W} we simply estimate
\begin{equation}
\begin{gathered}
\frac{1}{\rho^2}
\int_\M \gamma\Big( \frac{|x|}{\rho}\Big)|H +\tilde{H}| \dif \nv
\leq
\frac{\normi{H + \tilde{H}}}{\rho^2}
\int_\M \gamma \Big( \frac{|x|}{\rho}\Big) \dif \nv,
\\
\frac{1}{\rho^2}
\int_\M \gamma\Big( \frac{|x|}{\rho}\Big)|\tilde{H}| \dif \nw
\leq
\frac{\normi{\tilde{H}}}{\rho^2}
\int_\M \gamma \Big( \frac{|x|}{\rho}\Big) \dif \nw.
\end{gathered}
\end{equation}
For $\rho^{-1} \int \gamma \dif \muv$, we have
\begin{equation}\label{eq:last_estimate_measure_monotonicity_infty}
\begin{split}
\frac{1}{\rho} \int \gamma\Big( \frac{|x|}{\rho}\Big) \dif \muv
\leq &	\,	
\frac{4 + c\rho}{\rho^{2}}
\int_{\M} \gamma\Big( \frac{|x|}{\rho}\Big) \dif (\nv + a \nw)
+
2\rho
\frac{\dif}{\dif \rho}
\left(
\frac{1}{\rho^{2}}
\int_{\M} \gamma\Big( \frac{|x|}{\rho}\Big) \dif (\nv + a \nw)
\right)
\\
& +
\frac{\normi{H + \tilde{H}}}{\rho}
\int_\M \gamma \Big( \frac{|x|}{\rho}\Big) \dif \nv.
\end{split}
\end{equation}
Hence, computations similar to the proof of Lemma \ref{lmm:monotonicity_inequality_s} and of Proposition \ref{prop:monotonicity_formula_p} yield
\begin{equation}
\frac{\dif}{\dif \rho}
\bigg(
\frac{1}{\rho^2}
\int_{\M} \gamma\Big( \frac{|x|}{\rho}\Big)
\dif (\nv + a \nw)
\bigg)
\geq
- \frac{\Lambda}{\rho^2}
\int_{\M} \gamma\Big( \frac{|x|}{\rho}\Big)
\dif (\nv + a \nw)
\end{equation}
for some $\Lambda>0$ which depends only on $\M, \normi{H}, \normi{\tilde{H}}$ (which depends on the second fundamental form of $\pM$). By multiplying by $e^{\Lambda \rho}$ and arguing as in the proof of Proposition \ref{prop:monotonicity_formula_p}, we obtain the conclusion.
\end{proof}

%

\begin{thebibliography}{10}

\bibitem{AltCaffarelliFriedman84}
{\sc Alt, H.~W., Caffarelli, L.~A., and Friedman, A.}
\newblock A free boundary problem for quasilinear elliptic equations.
\newblock {\em Ann. Scuola Norm. Sup. Pisa Cl. Sci. (4) 11}, 1 (1984), 1--44.

\bibitem{CaffarelliFriedman85}
{\sc Caffarelli, L.~A., and Friedman, A.}
\newblock Regularity of the boundary of a capillary drop on an inhomogeneous
  plane and related variational problems.
\newblock {\em Rev. Mat. Iberoamericana 1}, 1 (1985), 61--84.

\bibitem{colding2003min}
{\sc Colding, T.~H., and De~Lellis, C.}
\newblock The min-max construction of minimal surfaces.
\newblock In {\em Surveys in differential geometry, {V}ol. {VIII} ({B}oston,
  {MA}, 2002)}, vol.~8 of {\em Surv. Differ. Geom.} Int. Press, Somerville, MA,
  2003, pp.~75--107.

\bibitem{delellis2017minmax}
{\sc De~Lellis, C., and Ramic, J.}
\newblock Min-max theory for minimal hypersurfaces with boundary.
\newblock {\em Ann. Inst. Fourier (Grenoble) 68}, 5 (2018), 1909--1986.

\bibitem{delellis2009existence}
{\sc De~Lellis, C., and Tasnady, D.}
\newblock The existence of embedded minimal hypersurfaces.
\newblock {\em J. Differential Geom. 95}, 3 (2013), 355--388.

\bibitem{demasi2018}
{\sc De~Masi, L.}
\newblock Min-max construction of minimal hypersurfaces.
\newblock Master's thesis, University of Trieste, 2018.

\bibitem{demasi2022}
{\sc De~Masi, L.}
\newblock PhD thesis, SISSA, forthcoming.

\bibitem{DePMag14}
{\sc De~Philippis, G., and Maggi, F.}
\newblock Regularity of free boundaries in anisotropic capillarity problems and
  the validity of {Y}oung's law.
\newblock {\em Arch. Ration. Mech. Anal. 216}, 2 (2015), 473--568.

\bibitem{dephilippis2014dimensional}
{\sc De~Philippis, G., and Maggi, F.}
\newblock Dimensional estimates for singular sets in geometric variational
  problems with free boundaries.
\newblock {\em J. Reine Angew. Math. 725\/} (2017), 217--234.

\bibitem{De-PhilippisSpolaorVelichkov21}
{\sc De~Philippis, G., Spolaor, L., and Velichkov, B.}
\newblock Regularity of the free boundary for the two-phase {B}ernoulli
  problem.
\newblock {\em Invent. Math. 225}, 2 (2021), 347--394.

\bibitem{De-Silva11}
{\sc De~Silva, D.}
\newblock Free boundary regularity for a problem with right hand side.
\newblock {\em Interfaces Free Bound. 13}, 2 (2011), 223--238.

\bibitem{GulliverLawson86}
{\sc Gulliver, R., and Lawson, Jr., H.~B.}
\newblock The structure of stable minimal hypersurfaces near a singularity.
\newblock In {\em Geometric measure theory and the calculus of variations
  ({A}rcata, {C}alif., 1984)}, vol.~44 of {\em Proc. Sympos. Pure Math.} Amer.
  Math. Soc., Providence, RI, 1986, pp.~213--237.

\bibitem{hong2021capillary}
{\sc Hong, H., and Saturnino, A.~B.}
\newblock Capillary surfaces: stability, index and curvature estimates, 2021.

\bibitem{IrieMarquesNeves18}
{\sc Irie, K., Marques, F.~C., and Neves, A.}
\newblock Density of minimal hypersurfaces for generic metrics.
\newblock {\em Ann. of Math. (2) 187}, 3 (2018), 963--972.

\bibitem{kagaya2017}
{\sc Kagaya, T., and Tonegawa, Y.}
\newblock A fixed contact angle condition for varifolds.
\newblock {\em Hiroshima Math. J. 47}, 2 (07 2017), 139--153.

\bibitem{li2021}
{\sc Li, C., Zhou, X., and Zhu, J.}
\newblock Min-max theory for capillary surfaces, To appear.

\bibitem{Li2015}
{\sc Li, M. M.-c.}
\newblock A general existence theorem for embedded minimal surfaces with free
  boundary.
\newblock {\em Communications on Pure and Applied Mathematics 68}, 2 (2015),
  286--331.

\bibitem{maggi_2012}
{\sc Maggi, F.}
\newblock {\em Sets of Finite Perimeter and Geometric Variational Problems: An
  Introduction to Geometric Measure Theory}.
\newblock Cambridge Studies in Advanced Mathematics. Cambridge University
  Press, 2012.

\bibitem{marques2016existence}
{\sc Marques, F.~C., and Neves, A.}
\newblock Existence of infinitely many minimal hypersurfaces in positive
  {R}icci curvature.
\newblock {\em Invent. Math. 209}, 2 (2017), 577--616.

\bibitem{demasi2021rectifiability}
{\sc Masi, L.~D.}
\newblock Rectifiability of the free boundary for varifolds, 2021.

\bibitem{Meeks07}
{\sc Meeks, III, W.~H.}
\newblock The structure of stable minimal surfaces near a singularity.
\newblock {\em Michigan Math. J. 55}, 1 (2007), 155--161.

\bibitem{pitts2014existence}
{\sc Pitts, J.~T.}
\newblock {\em Existence and Regularity of Minimal Surfaces on Riemannian
  Manifolds.}
\newblock Princeton University Press, 2014.

\bibitem{ros97}
{\sc Ros, A., and Souam, R.}
\newblock On stability of capillary surfaces in a ball.
\newblock {\em Pacific Journal of Mathematics 178\/} (04 1997).

\bibitem{simon1982isolated}
{\sc Simon, L.}
\newblock {\em On isolated singularities of minimal surfaces}.
\newblock Australian National University, Institute of Advanced Studies, 1982.

\bibitem{simon:lectures}
{\sc Simon, L.}
\newblock {\em Lectures on geometric measure theory}.
\newblock Proceedings of the Centre for Mathematical Analysis. Australian
  National University, 1984.

\bibitem{song2018existence}
{\sc Song, A.}
\newblock Existence of infinitely many minimal hypersurfaces in closed
  manifolds, 2018.

\bibitem{zhou2019min}
{\sc Zhou, X., and Zhu, J.~J.}
\newblock Min--max theory for constant mean curvature hypersurfaces.
\newblock {\em Inventiones mathematicae 218}, 2 (2019), 441--490.

\end{thebibliography}

\end{document}